\theoremstyle{plain}
\newtheorem{proposition}{Proposition}[section] 
\newtheorem{theorem}{Theorem}
\newtheorem{lemma}[proposition]{Lemma}
\newtheorem{corollary}[proposition]{Corollary}
\theoremstyle{definition}
\newtheorem{definition}[proposition]{Definition}
\newtheorem*{axiom}{Axiom}
\newtheorem{remark}[proposition]{Remark}
\newcommand{\R}{\mathbb R}
\newcommand{\Q}{\mathbb Q}
\newcommand{\N}{\mathbb N}
\renewcommand{\P}{\mathbb P}
\newcommand{\bcirc}{\bar\circ}
\newcommand{\hcirc}{\hat\circ}
\newcommand{\id}{\mathrm{id}}
\newcommand{\Id}{\mathrm{Id}}
\newcommand{\V}{\mathcal V}
\newcommand{\D}{\EuScript D}
\newcommand{\C}{\mathscr C}
\newcommand{\M}{\EuScript M}
\renewcommand{\k}{\Bbbk}
\newcommand{\E}{\EuScript E}
\newcommand{\F}{\EuScript F}
\newcommand{\bbF}{\mathbb{F}}
\newcommand{\bbS}{\mathbb{S}}
\newcommand{\vp}{\varphi}
\newcommand{\Cat}{{\EuScript{C} at}}
\newcommand{\Nat}{{\mathrm{Nat}}}
\newcommand{\Top}{{\EuScript{T}op}}
\newcommand{\CG}{\EuScript{CG}}
\newcommand{\mor}{\mathrm{mor}}
\newcommand{\Hom}{\mathrm{Hom}}
\newcommand{\Smash}{\wedge}
\newcommand{\ev}{\mathrm{ev}}
\newcommand{\op}{\mathrm{op}}
\newcommand{\cof}{\mathrm{cof}}
\newcommand{\tens}{\otimes}
\renewcommand{\S}{\sSet}
\newcommand{\colim}{\mathrm{colim}}
\newcommand{\Mod}{{\mathsf{Mod}}}
\newcommand{\coeq}{\mathrm{coeq}}
\newcommand{\map}{\mathrm{map}}
\renewcommand{\smash}{\Smash}
\newcommand{\circs}{\circ_\Sigma}
\renewcommand{\S}{\mathbb{S}}
\newcommand{\CMod}{{{_C}\Mod}}
\newcommand{\circr}{\circ_R}
\renewcommand{\F}{\mathbb{F}}
\newcommand{\FR}{{\mathbb F}_R}
\newcommand{\Thick}{\mathrm{Thick}}
\renewcommand{\E}{\EuScript{E}}
\newcommand{\G}{\EuScript{G}}
\newcommand{\homr}{\hom^r}
\newcommand{\homl}{\hom^l}
\newcommand{\Precat}{\mathsf{PreCat}}
\newcommand{\Prefun}{\mathsf{PreFun}}
\newcommand{\Bimod}{\mathsf{Bimod}}
\renewcommand{\Cat}{\mathsf{Cat}}
\newcommand{\Alg}{\mathsf{Alg}}
\renewcommand{\Top}{\mathsf{Top}}
\newcommand{\Algmod}{\Alg\Mod}
\newcommand{\Algbimod}{\Alg\Bimod}
\newcommand{\I}{\EuScript I}
\newcommand{\J}{\EuScript J}
\newcommand{\prop}{{\mathrm{prop}\,}}
\newcommand{\inj}{{ inj}}
\renewcommand{\smash}{\Smash}
\title{Koszul duality for categories with a fixed object set}
\author{Hadrien Espic}
\date{}
\newcommand{\Address}{{
  \bigskip
  \footnotesize
  
\textsc{Department of Mathematics, Stockholm University, SE-106 91 Stockholm, Sweden}\par\nopagebreak
  \textit{E-mail address:} \texttt{hadrien.espic@gmail.com}\par\nopagebreak
}}
\begin{document}

\maketitle

\begin{abstract}

We define a notion of Koszul dual of a monoid object in a monoidal biclosed model category. Our construction generalizes the classic Yoneda algebra $\mathrm{Ext}_A(\k, \k)$. We apply this general construction to define the Koszul dual of a category enriched over spectra or chain complexes. This example relies on the classical observation that enriched categories are monoid objects in a category of enriched graphs. We observe that the category of enriched graphs is biclosed, meaning that it comes with both left and right internal hom objects. Given a category $R$ (which plays the role of the ground field $\k$ in classical algebra), and an augmented $R$-algebra $C$, we define the Koszul dual of $C$, denoted $K(C)$, as the $R$-algebra of derived endomorphisms of $R$ in the category of right $C$-modules.

We establish the expected adjunctions between the categories of modules over $C$ and modules over $K(C)$. We investigate the question of when the map from $C$ to its double dual $K(K(C))$ is an equivalence. We also show that Koszul duality of operads can be understood as a special case of Koszul duality of categories. In this way we incorporate Koszul duality of operads in a wider context, and possibly clarify some aspects of it.

\end{abstract}

\tableofcontents

\section*{Introduction}





In this paper we develop and study a notion of Koszul duality for enriched categories with a fixed object set $S$, by way of a generalization of the Yoneda algebra construction to that setting. Let $C$ be a spectrally enriched category. Assuming $C$ is augmented, in a sense to be made precise below, we associate to $C$ another spectrally enriched category $K(C)$, with the same object set as $C$. The category $K(C)$ plays the role of the (derived) Koszul dual of $C$. We show that Koszul duality of operads (in spectra or in chain complexes) can be understood as a special case of our duality of categories. We believe that this can be applied to the Orthogonal Calculus of functors developed by Michael Weiss in \cite{Weiss95}, in a similar way that the Koszul duality of operads appears in Goodwillie Calculus (see \cite{AroneChing} for example).

Our definition of the Koszul dual of an enriched category with object set $S$ starts from the following standard idea. For $(\V,\smash,1)$ a pointed cofibrantly generated symmetric monoidal model category, one can consider the category $\V^{S\times S}$ of $\V$-graphs with vertex set $S$, which we will call precategories with object set $S$. This category of graphs has a monoidal structure given by the composition product
$$
(X\circ Y)(i,j)=\bigvee_{k\in S} X(i,k)\smash Y(k,j)
$$
and the unit $I$ defined by $I(i,j)=*$ for $i\neq j$ and $I(i,i)=1$. We explain in section 2 how this gives $\V^{S\times S}$ a structure of cofibrantly generated monoidal model category, whose category of monoids is exactly that of $\V$-categories with object set $S$. This point of view on categories is known (dating back to MacLane \cite{MacLaneCategories}, II.7) and is discussed in section 6 of \cite{SchwedeEquivalences} for example. However, it is usually not emphasized (to the author's knowledge) that this category is biclosed, meaning the functors $X\circ -$ and $-\circ X$ have right adjoints $\homl(X,-)$ and $\homr(X,-)$. 

\begin{align*}
    \homr(X,Y)(i,j) = \prod_{k\in S} \map(X(j,k), Y(i,k)) \\
\homl(X,Y)(i,j) = \prod_{k\in S} \map(X(k,i), Y(k,j))
\end{align*}

The existence of these left and right hom objects is what makes a generalization of the Yoneda algebra possible. Contrast this with the case of operads, where the monoidal category of symmetric sequences only has right hom objects, but no left hom objects.

This then fits in the more general framework discussed in section 1. If $R$ is a monoid in an arbitrary cofibrantly generated monoidal biclosed model category $(\C,\tens,1)$ and $C$ is an $R$-algebra, we can define the Koszul dual of $C$ by
$$
K(C) = \R \hom_C(R,R)
$$
This object comes equipped with a natural multiplicative structure given by the composition of right $C$-endomorphisms.

Another approach that is popular (especially in the setting of $\infty$-categories, see section 5.2 of \cite{lurieHigherAlgebra}) for tackling Koszul duality in a general setting is taking the dual of the bar construction $B(R,C,R)$, or equivalently just saying that the bar construction is the Koszul dual coalgebra of $C$. It relies on the existence of a comultiplicative structure on the bar construction (which in general seems to only be an $A_\infty$-coalgebra structure), and makes obtaining a map from $C$ into its double dual a non-trivial problem. Our approach has the advantage that the $R$-algebra structure is straightforward and that an algebra map from $C$ into its double dual is easily obtained. However, our $K(C)$ has the drawback of neither being augmented nor being functorial, although it is both of these up to homotopy. Let us remark though that even though $K(C)$ is not augmented, it has enough structure for us to provide a well-defined notion of double dual $KK(C)$. We give some basic examples in section 2.4 and also mention what we believe could be an interesting application of our construction to the category of finite sets and injections $FI$.

In section 3 we investigate the connection of Koszul duality of categories to Koszul duality of operads. Thus we consider the case where $R$ is the symmetric groupoid, and $C$ is the prop associated to a $\V$-operad. Here $\V$ can be any symmetric monoidal stable model category, such as the category of chain complexes or spectra. Regarding spectra, for technical reasons we choose to work with the category of $\S$-modules from \cite{EKMM}, also known as EKMM spectra. But we believe that our results should hold with any of the highly structured model categories of spectra described, for example, in \cite{mandell}. We obtain the following result, which compares our construction with Michael Ching's coalgebra structure on $BP$ from \cite{Ching05}.
\begin{theorem}[Proposition \ref{prop:StrongCompatibility}]
If $P$ is a reduced and levelwise finite operad in $\S$-modules, then there is a zigzag of weak equivalences of $\Sigma$-algebras
$$
K(\prop P) \sim \prop((BP)^\sharp)^\op
$$
\end{theorem}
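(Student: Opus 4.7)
The plan is to compute $K(\prop P) = \R\hom_{\prop P}(R,R)$ by choosing an explicit cofibrant resolution of $R$ as a right $\prop P$-module and then matching the resulting hom-object with the dual of Ching's bar construction. First I would take the two-sided bar construction
$$
\beta \;:=\; B(R,\prop P,\prop P) \;=\; |B_\bullet(R,\prop P,\prop P)|
$$
as a cofibrant replacement of $R$ in right $\prop P$-modules. This uses the general cofibrant replacement via two-sided bar constructions in a cofibrantly generated monoidal biclosed model category, which has been set up for $\V^{S\times S}$ earlier in the paper. This yields
$$
K(\prop P) \;\simeq\; \hom_{\prop P}(\beta,R).
$$

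Next, by the free-module adjunction between right $\prop P$-modules and $R$-bimodules (symmetric sequences), the right-hand side is computed by
$$
\hom_{\prop P}(\beta,R) \;\cong\; \hom_R\bigl(B(R,\prop P,R),\, R\bigr) \;=\; B(R,\prop P,R)^{\sharp},
$$
where $(-)^{\sharp}$ is the $R$-linear levelwise dual. The plan is then to identify $B(R,\prop P,R)$, viewed as an object of symmetric sequences, with Ching's operadic bar construction $BP$. The point is that the simplices $B_n(R,\prop P,R)$ decompose over chains of composable morphisms in $\prop P$, and these chains unfold combinatorially as trees whose internal edges are labelled by operations of $P$; reducedness of $P$ ensures that only the connected (tree-indexed) part survives the quotient by the $R$-action, recovering exactly Ching's model for $BP$. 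Levelwise finiteness of $P$, which propagates to $BP$ by a standard filtration argument, guarantees that $(-)^\sharp$ preserves the relevant weak equivalences, so $B(R,\prop P,R)^\sharp \simeq (BP)^\sharp$ as $R$-bimodules.

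The last step is to match multiplicative structures. The $R$-algebra structure on $K(\prop P)$ is given by composition of right $\prop P$-endomorphisms of $R$. Under the identification above this corresponds to the map dual to Ching's cooperad structure on $BP$. Since composing endomorphisms reverses the order of the underlying operadic composition, the dualization lands in the opposite algebra, producing $\prop((BP)^\sharp)^{\op}$ after applying $\prop(-)$. The comparison is therefore obtained as a zigzag of $\Sigma$-algebra weak equivalences passing through $B(R,\prop P,R)^\sharp$ on one side and through a cofibrant replacement of $(BP)^\sharp$ on the other.

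The main obstacle I expect is the combinatorial identification in the second paragraph: spelling out the bijection between the tree decomposition of $B_\bullet(R,\prop P,\prop P)$ and Ching's tree-indexed bar construction, checking that the cosimplicial/simplicial face and degeneracy maps correspond, and verifying that the induced composition/cocomposition maps transfer correctly including the $\op$. A secondary technical point is ensuring cofibrancy of $\beta$ and good behaviour of $(-)^\sharp$ on all simplicial levels simultaneously, for which reducedness and levelwise finiteness of $P$ are precisely the needed hypotheses.
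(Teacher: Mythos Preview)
Your first two steps reproduce exactly the bimodule-level argument of Section~3.2 (Lemma~\ref{Corollary:IntermediateStepCompatibility} and Proposition~\ref{prop:compatibility}): one has $K(\prop P)\simeq \hom_\Sigma(B(\Sigma,\prop P,\Sigma),\Sigma)$ and $B(\Sigma,\prop P,\Sigma)\cong \prop BP$, yielding an equivalence of $\Sigma$-\emph{bimodules}. The paper itself presents this as the weaker result and devotes Section~3.3 to upgrading it to an equivalence of $\Sigma$-\emph{algebras}. Your step~4 is where the genuine gap lies.

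The problem is that the equivalence $K(\prop P)\simeq \hom_\Sigma(B(\Sigma,\prop P,\Sigma),\Sigma)$ coming from Proposition~\ref{prop:DescriptionDual} is only an equivalence of $\Sigma$-bimodules; the target has no evident algebra structure, so there is nothing to ``match'' the composition product against. Your assertion that ``under the identification above this corresponds to the map dual to Ching's cooperad structure on $BP$'' is precisely the content of the theorem, and you have not provided any mechanism for it. The zigzag you describe cannot pass through $B(\Sigma,\prop P,\Sigma)^\sharp$ as a zigzag of \emph{algebra} maps, because that object is not an algebra. The paper's approach is quite different: rather than trying to transport the composition product through the bimodule equivalence, it constructs an explicit algebra map \emph{from the outset} by exhibiting $\prop(B(P,P,1)^\sharp)^{\op}$ as a $(\prop(BP^\sharp)^{\op},\prop P)$-bimodule (using Ching's right $BP$-comodule structure on $B(P,P,1)$). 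This bimodule is weakly equivalent to $\Sigma$ as a right $\prop P$-module, so its derived $\prop P$-endomorphisms compute $K(\prop P)$, and the left action gives the desired algebra map $\varphi:\mathbb Q\prop(BP^\sharp)^{\op}\to K(\prop P)$ directly. Proving $\varphi$ is a weak equivalence then reduces, after a sizable diagram chase, to the compatibility of Ching's comodule structure map $B(P,P,1)\to B(P,P,1)\,\hat\circ\, BP$ with the augmentation $B(P,P,1)\to BP$.

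A minor point: $B(R,\prop P,R)$ is a $\Sigma$-bimodule, not a symmetric sequence; the identification is with $\prop BP$, not with $BP$ itself (this is the monoidality of $\prop$, Proposition~\ref{prop:PropFunctorMonoidal}).
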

Here $(BP)^\sharp$ denotes the arity-wise Spanier-Whitehead dual of $BP$. The algebra structure on the target is induced by the cooperad structure on $BP$ and the fact that the functor $\prop$ is monoidal (Proposition \ref{prop:PropFunctorMonoidal}). Let us mention as well that our proof of this result is constructive, building explicit $\Sigma$-algebra maps.


This result means that the approach to Koszul duality of categories we consider in this paper is compatible with the Koszul duality of operads in spectra developed by Michael Ching in \cite{Ching05}.

Following that, we recall in section 4 the idea of prefunctor. This is similar in spirit to the idea of seeing categories with object set $S$ as precategories with object set $S$ with a monoid structure. Prefunctors are elements of $\V^S$, and one can see functors from $C$ to $\V$ as the prefunctors possessing a right $C$-module structure, with respect to an external tensor product with $\V^{S\times S}$. We then derive adjunctions and Quillen equivalences relating the functor categories $\mathcal F (C,\V)$ and $\mathcal F(C^{op},\V)$ with $\mathcal F(K(C),\V)$. One of these equivalences, it turns out, is a special case of the work of Brooke Shipley and Stefan Schwede in \cite{SchSh03}.


Finally, in section 5, we remark that the algebra map from a finitely built square zero extension $R\vee M$ (i.e. $M$ is an $R$-bimodule and the algebra structure comes from this bimodule structure) to its double dual is a weak equivalence, and through a description of the bar construction $B(R,R\vee M, R\vee M)$ (see Proposition \ref{Prop:SquareZeroBarDescription}) prove the following result.

\begin{theorem}[Corollary \ref{Corollary:DualTrivialIsFree}]
Let $M$ be an $R$-bimodule and $R\vee M$ the corresponding square zero extension. Then there is a weak equivalence of algebras
$$
\varphi: \bbF_R (\hom_R(\Sigma M, R)) \xrightarrow{\sim} K(R\vee M)
$$
from the free $R$-algebra generated by $hom_R(\Sigma M, R)$ to the Koszul dual of $R\vee M$.
\end{theorem}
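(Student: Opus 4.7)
The plan is to construct $\varphi$ via the universal property of the free algebra and then prove it is a weak equivalence by computing its target via the bar construction.

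First I would construct the generator map. The exact triangle $M\to R\vee M\to R$ of right $R\vee M$-modules (where the leftmost map is the inclusion of the kernel of the augmentation, and $M$ acts trivially on itself by the square-zero condition) yields, after applying $\R\hom_{R\vee M}(-,R)$, a connecting morphism $\hom_R(\Sigma M,R)\to K(R\vee M)$. By the universal property of $\bbF_R$ this extends uniquely to an algebra map $\varphi:\bbF_R(\hom_R(\Sigma M,R))\to K(R\vee M)$, so the content of the theorem is that this particular $\varphi$ is a weak equivalence.

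To check the equivalence, I would invoke Proposition \ref{Prop:SquareZeroBarDescription} for an explicit model of $B(R,R\vee M,R\vee M)$. Since $M\cdot M=0$ inside $R\vee M$, all the middle face maps of the bar construction vanish on the normalized part, so the expected description is a semi-free model of the form $\bigvee_{n\ge 0}(\Sigma M)^{\otimes_R n}\otimes_R(R\vee M)$ whose only nontrivial differential involves the outer augmentation. Since the two-sided bar construction is a cofibrant replacement of $R$ as a right $R\vee M$-module, this gives
\[
K(R\vee M)\;\simeq\;\hom_{R\vee M}\bigl(B(R,R\vee M,R\vee M),R\bigr)\;\simeq\;\hom_R\Bigl(\bigvee_n(\Sigma M)^{\otimes_R n},R\Bigr),
\]
where the second equivalence is the tensor-hom adjunction along the augmentation. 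Under the finitely-built hypothesis from the discussion preceding the statement, the resulting product collapses to a wedge and $\hom_R((\Sigma M)^{\otimes_R n},R)\simeq\hom_R(\Sigma M,R)^{\otimes_R n}$, which matches $\bbF_R(\hom_R(\Sigma M,R))$ degree by degree.

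The remaining step is to verify that the zigzag above is compatible with $\varphi$ as an algebra map. On the length-$1$ piece the identification recovers the connecting morphism, so $\varphi$ agrees in degree one, and by multiplicativity it agrees everywhere once the algebra structures match. This last verification is where I expect the main obstacle: one must show that composing endomorphisms supported on the length-$p$ and length-$q$ pieces of the bar resolution produces an endomorphism supported on the length-$(p+q)$ piece, and that this composition corresponds precisely to tensor concatenation in $\bbF_R$ with matching signs and suspension conventions. Tracing through the explicit chain-level composition using the semi-free description of the bar construction is the technical heart of the proof.
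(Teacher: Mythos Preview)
Your overall strategy---define $\varphi$ from a degree-one generator, compute the target via the bar construction, and match up the pieces degreewise using the finiteness hypotheses---is the same as the paper's. The execution, however, differs in two ways that are worth flagging.

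First, the paper does not build the generator map as a connecting morphism from the triangle $M\to R\vee M\to R$. A connecting map lives only in the homotopy category, so it is not clear it gives a point-set $R$-bimodule map to which the universal property of $\bbF_R$ can be applied to produce an honest algebra map. Instead the paper uses Proposition~\ref{Prop:SquareZeroBarDescription} to write $B(R,C,C)$ as a weight-graded object whose weight-$n$ piece is $(S^1)^{\smash n-1}\smash I\smash M^{\circ_R n}$ (note the interval $I$, not $\Sigma M$ tensored with $C$ as you wrote), and then defines an explicit evaluation $D(\Sigma M)\circ_R B(R,C,C)\to B(R,C,C)$ that peels off one $\Sigma M$ factor and lowers weight by one. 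This is a genuine $(R,C)$-bimodule map, so the induced $\varphi$ is a strict algebra map.

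Second, the paper avoids the obstacle you correctly identify---checking that endomorphism composition matches tensor concatenation---by not verifying multiplicativity of the identification at all. Instead it postcomposes $\varphi$ with the weak equivalence $\psi:K(C)\to\Hom_R(B(R,C,R),R)$ from Proposition~\ref{prop:DescriptionDual} and analyses the composite $\varrho=\psi\varphi$ as a mere map of $R$-bimodules. Because $\varphi$ was built so that $D(\Sigma M)^{\circ_R n}$ acts by lowering weight by exactly $n$, and because $\psi$ kills positive-weight elements, the off-diagonal components $\varrho_{n,k}$ vanish for $n\neq k$, and each diagonal component $\varrho_{n,n}$ is the canonical map $D(\Sigma M)^{\circ_R n}\to D((\Sigma M)^{\circ_R n})$. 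The directedness assumption then makes the wedge and product finite in each level, and the finitely-built hypothesis gives the equivalence on each diagonal piece. Since $\psi$ is already known to be a weak equivalence, so is $\varphi$. This route trades your delicate sign-and-composition chase for a bookkeeping argument about weights.
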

\vspace{4mm}
\begin{center}
    \textbf{Acknowledgements}
\end{center}
The author would like to thank his PhD supervisor Gregory Arone for his continuous supervision, help and guidance, and numerous enlightening conversations, especially regarding the main results in sections 3 and 5 of this paper. Special thanks go to Alexander Berglund as well for conversations about how the framework developed in this article relates to other discussions of Koszul duality in the literature and how it could be used in later work. The author would also like to thank Asaf Horev for his useful and interesting remarks on Koszul duality of algebras in the context of $\infty$-categories.

\section{A general Koszul dual construction}

We explain in this section how one can extend the notion of Koszul dual of an augmented algebra in a general context. Throughout this section, $(\C, \tens, 1)$ denotes a monoidal biclosed model category, not necessarily symmetric. In fact in further sections, $\C$ will not be symmetric. Besides we ask that the model structure on $\C$ is cofibrantly generated, so that we can transfer its model structure to the various categories of modules and algebras that will be considered. As stated in the introduction, the idea of our definition is to generalize the classical Yoneda algebra $\mathrm{Ext}_A(\k,\k)$ for $A$ a $\k$-algebra where $\k$ is a field. This first section does not really provide any new results. There is significant overlap with the first section of \cite{BerglundHess} (the arxiv version, even though it mostly deals with symmetric monoidal categories) and other similar discussions of model categories of algebras and modules in the literature. The role of this section is to lay out a very general context in which the phrase ``the derived $C$-endomorphisms of $R$" makes sense for $R$ a monoid and $C$ an augmented $R$-algebra, and then define the Koszul dual of $C$ to be just that.

\subsection{Basic definitions}

Let us start by recalling what we mean when we speak of a \textit{monoidal biclosed model category}.

\begin{definition}
A monoidal category $(\C, \tens, 1)$ equipped with a model structure is said to be \textit{biclosed} when for any object $A$ of it, the two functors $A\tens -$ and $-\tens A$ have right adjoints.

Moreover, it is called a \textit{monoidal model category} when the monoidal structure and the model structure satisfy the following compatibility condition, commonly referred to as SM7.

\begin{axiom}[SM7]
Assume that $f:A\rightarrow B$ and $g:X\rightarrow Y$ are cofibrations in $\C$. Then the induced map
$$
A\tens Y \cup_{A\tens X} B\tens X \rightarrow A\tens Y
$$
is a cofibration in $\C$, and is trivial if either $f$ or $j$ is. In addition, denoting $f:1_c \xrightarrow{\sim} 1$ the cofibrant replacement of the unit, the map $f\tens \id_A$ is a weak equivalence for all cofibrant $A$ in $\C$.
\end{axiom}
\end{definition}

The two right adjoint functors to $A\tens -$ and $-\tens A$ will be respectively denoted $\homl(A,-)$ and $\homr(A,-)$. We will often refer to the counits of the corresponding adjunctions as evaluation maps. Besides we will call
$$
\homl(-,-), \homr(-,-): \C^\op \times \C \rightarrow \C
$$
the left and right {\it hom bifunctors}.

This notion of monoidal model category comes from \cite{schwede-shipley}, Definition 3.1. For more context on this axiom, the reader can have a look at part II.3 of \cite{goerss-jardine}, even though it deals with the case of a model category equipped with a hom object functor landing in the category of simplicial sets instead of $\C$ itself.

\begin{remark}
We will simplify the notation by writing $\hom$ for $\homr$, since it is the hom object we will be using the most. The two hom object bifunctors are adjoint to each other in the contravariant variable, that is, for $A\in \C$,
$$
\mor_\C(\bullet, \hom(-, A)) \cong \mor_\C(\bullet \tens -, A) \cong \mor_\C(-, \homl(\bullet, A))
$$
\end{remark}

\begin{remark}\label{remark:EnrichedAdjunction}
Let $(F,G)$ be a pair of adjoint functors $\C\rightarrow \C$, and assume we want to know whether this pair is $\C$-enriched, meaning, if the adjunction bijection actually comes from an isomorphism of hom objects. Given $M,N$ in $\C$, there is a natural isomorphism $\mor(-, \hom(M,N)) \cong \mor(-\tens M, N)$ of functors from $\C$ to the category of sets and thus by the Yoneda lemma, an isomorphism $\hom(F(M), N) \cong \hom(M,G(N))$ is the same as an isomorphism $\mor(-\tens F(M), N) \cong \mor(- \tens M, G(N))$. So in the particular case where $F(X \tens M) \cong X\tens F(M)$ naturally for all $M,X$ or if, equivalently, $G(\homl(X,N))=\homl(X, G(N))$ naturally for all $X,N$, the adjoint pair is $\C$-enriched. In particular this applies to the case $F=-\tens Y$, $G= \hom(Y,-)$. Note that in general one can only expect this condition $F(X \tens M) \cong X\tens F(M)$ to hold in very particular cases, as this means the function $F$ is determined by the image of the unit object, but it will come in handy later.
\end{remark}

Given a monoid $R$ in $(\C, \tens, 1)$, we get categories of left and right $R$-modules which will be denoted $_R\Mod$ and $\Mod_R$ respectively, and a category of $R$-bimodules $_R\Mod_R := {_R(\Mod_R)} = (_R\Mod)_R$. As in section 4 of \cite{schwede-shipley}, given a monoid $R$ in $\C$, we define a tensor product over $R$
$$
-\tens_R - : {_S\Mod_R} \times {_R\Mod_T} \rightarrow {_S\Mod_T}
$$
as follows. For $A$ an $(S,R)$-bimodule and $B$ an $(R,T)$-bimodule, $A\tens_R B$ denotes the coequalizer of $A\tens R\tens B \rightrightarrows A\tens B$ where one map uses the right $R$-module structure on $A$ and the other uses the left $R$-module structure on $B$. In addition to this equivariant tensor product, one can also define an equivariant hom object functor for right $R$-modules as the equalizer of $\hom(A,B)\rightrightarrows \hom(A\tens R, B)$, denoted $\hom_R(A,B)$. Here one of the maps is $\hom(\mu_A, B)$, with $\mu_A$ being the right $R$-module structure map of $A$, and the other is $\hom(A\tens R, \mu_B) \circ \vp$ where by $\vp$ we mean the adjoint map to
$$
\ev\tens R : \hom(A,B) \tens A\tens R\rightarrow B\tens R
$$
Dually, one can define a relative hom object for left $R$-modules using $\homl$. Similarly as for the relative tensor product discussed above, it is important to remark that this relative hom object behaves well with respect to bimodules.

\begin{proposition}
Let $M$ be a $(T,R)$-bimodule and $N$ be an $(S,R)$-bimodule. Then $\hom_R(M,N)$ is an $(S,T)$-bimodule. In other words the $hom_R$ bifunctor admits a factorization
$$
\xymatrix{
& \C \\
{_T\Mod_R}^{op} \times {_S\Mod_R} \ar[ru]^-{\hom_R} \ar@{-->}[r] &{_S\Mod_T} \ar[u]_{U}
}
$$
\end{proposition}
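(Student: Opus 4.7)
The plan is to use the universal property of $\hom_R(M,N)$: since it is defined as the equalizer enforcing $R$-equivariance, maps $A \to \hom_R(M,N)$ in $\C$ correspond naturally and bijectively to maps $A \tens M \to N$ of right $R$-modules. Equivalently, there is a canonical evaluation $\ev : \hom_R(M,N) \tens M \to N$ in $\Mod_R$, corresponding to the identity, through which every such map factors.

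Using this, I would construct the left $S$-action on $\hom_R(M,N)$ as the unique map $S \tens \hom_R(M,N) \to \hom_R(M,N)$ corresponding by adjunction to the composite
$$
S \tens \hom_R(M,N) \tens M \xrightarrow{S \tens \ev} S \tens N \to N,
$$
where the second arrow is the left $S$-action on $N$. This composite is a morphism of right $R$-modules, since $\ev$ is by construction and the left $S$-action on $N$ commutes with the right $R$-action because $N$ is an $(S,R)$-bimodule. Symmetrically, the right $T$-action $\hom_R(M,N) \tens T \to \hom_R(M,N)$ corresponds by adjunction to
$$
\hom_R(M,N) \tens T \tens M \to \hom_R(M,N) \tens M \xrightarrow{\ev} N,
$$
where the first arrow is the identity tensored with the left $T$-action $T \tens M \to M$; the $(T,R)$-bimodule structure on $M$ is precisely what makes this map $R$-equivariant.

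It then remains to verify the bimodule axioms: associativity and unitality of each action, together with commutativity of the left and right actions. Each of these amounts to an equality of two maps into $\hom_R(M,N)$, which by the uniqueness in the universal property reduces to an equality of the corresponding maps into $N$ obtained after tensoring with $M$ and composing with $\ev$; these identities in turn follow from the analogous axioms for the actions of $S$ on $N$ and $T$ on $M$. The case I expect to require the most bookkeeping is the commutativity of the left $S$ and right $T$ actions: after evaluation the $S$-action touches the outer factor $N$ and the $T$-action touches the inner factor $M$, so commutativity reduces to bifunctoriality of the tensor product. The main subtlety throughout is that $\C$ is not assumed symmetric, so one must check that every manipulation uses only the adjunction $\mor(A \tens B, C) \cong \mor(A, \hom(B,C))$ and never a symmetry isomorphism; the placement of tensor factors above is chosen precisely to avoid this. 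Functoriality of the resulting $(S,T)$-bimodule structure in $M$ and $N$ is established in the same manner, giving the asserted factorization of $\hom_R(-,-)$ through $_S\Mod_T$.
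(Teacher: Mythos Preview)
Your proposal is correct and follows essentially the same approach as the paper: both construct the $S$- and $T$-actions by writing down the evident composites $S\tens\hom_R(M,N)\tens M\to S\tens N\to N$ and $\hom_R(M,N)\tens T\tens M\to\hom_R(M,N)\tens M\to N$ and then passing through the adjunction/universal property of $\hom_R$. Your write-up is in fact more thorough than the paper's, which omits the explicit verification of the bimodule axioms and the commutativity of the two actions that you outline.
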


\begin{proof}
Consider
$$
S\tens \hom_R(M,N) \tens M \rightarrow S\tens N \rightarrow N
$$
It is clear that this map yields an adjoint map
$$
S\tens \hom_R(M,N) \rightarrow \hom_R(M,N)
$$
since the left $S$-module and right $R$-module structures on $N$ are independent, i.e. they commute. Besides, this map is compatible with the monoid structure on $S$ because $S\tens N\rightarrow N$ is.
In a similar way, the composite
$$
\hom_R(M,N)\tens T \tens M\rightarrow \hom_R(M,N)\tens M \rightarrow N
$$
corresponds to a map
$$
\hom_R(M,N)\tens T \rightarrow \hom_R(M,N)
$$
which gives the desired right $T$-module structure to $\hom_R(M,N)$.
\end{proof}

The relative hom object $\hom_R$ gives $\Mod_R$ an enriched category structure over $\C$ and a monoidal biclosed structure on $({_R\Mod_R},\tens_R, R)$. This can be deduced from the definitions and the following result.

\begin{proposition}\label{prop:BimodulesMonoidalBiclosed}
For $A\in {_S\Mod_R}$, $B\in {_R\Mod_T}$, $C\in {_S\Mod_T}$ there is a natural isomorphism
$$
\mor_{_S\Mod_T}(A\tens_R B, C) \cong \mor_{_S\Mod_R}(A, \hom_T(B,C))
$$
\end{proposition}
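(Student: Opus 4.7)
The plan is to reduce the statement to the underlying adjunction $\mor_\C(A\tens B, C) \cong \mor_\C(A, \hom(B,C))$ and then track which subset of maps on each side corresponds to the claimed bimodule-equivariant ones. The right $T$-action on the hom object was already set up, via the equalizer defining $\hom_T(B,C)$, precisely so that this translation works, and the previous proposition equips $\hom_T(B,C)$ with the $(S,R)$-bimodule structure needed to make sense of $\mor_{_S\Mod_R}(A, \hom_T(B,C))$.

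Concretely, I would proceed as follows. Starting from a map $f: A\tens B \to C$, let $g: A \to \hom(B,C)$ be its adjoint. First I would check that $f$ is left $S$-linear (with $S$ acting through $A$) if and only if $g$ is left $S$-linear (with $S$ acting on $\hom(B,C)$ through $C$); this is essentially because the left $S$-action on $A\tens B$ and on $C$ is independent of anything happening in the $B$-variable, and the adjunction is $\C$-enriched. Second, I would check that $f$ is right $T$-linear (with $T$ acting through $B$) if and only if, for each $a$, the map $g(a): B\to C$ is right $T$-linear, i.e.\ $g(a)$ lies in the equalizer $\hom_T(B,C)\subseteq \hom(B,C)$; this follows from spelling out the two composites $\hom(B,C)\rightrightarrows \hom(B\tens T, C)$. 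Third, I would verify that $f$ coequalizes the two maps $A\tens R\tens B \rightrightarrows A\tens B$ (i.e.\ descends to $A\tens_R B$) if and only if the corresponding $g: A \to \hom_T(B,C)$ is right $R$-linear, where the right $R$-action on $\hom_T(B,C)$ comes from the left $R$-action on $B$, as supplied by the preceding proposition.

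Combining these three equivalences gives the desired bijection, and naturality in $A$, $B$, $C$ is automatic since every identification is obtained by restricting the naturally defined $\C$-level adjunction isomorphism. The main obstacle, such as it is, is purely bookkeeping: making sure the $S$-action on $\hom_T(B,C)$ induced by that on $C$, the $T$-action on $\hom_T(B,C)$ defining the equalizer, and the $R$-action used to impose the $R$-balanced condition all match the structure produced in Proposition 1.2. Once the definitions are unfolded, no further computation is needed, since the equivariance conditions on the left-hand side (bimodule map plus coequalizing $R$) match exactly with the equivariance conditions on the right-hand side (factoring through $\hom_T$ plus $(S,R)$-bilinearity).
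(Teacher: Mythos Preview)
Your proposal is correct and follows essentially the same approach as the paper: both reduce to the underlying adjunction $\mor_\C(A\tens B,C)\cong \mor_\C(A,\hom(B,C))$ and then verify separately that (i) the $R$-coequalizing condition on $f$ matches right $R$-linearity of $g$, (ii) right $T$-linearity of $f$ matches the factorization of $g$ through the equalizer $\hom_T(B,C)$, and (iii) left $S$-linearity transfers directly. The only differences are the order in which the three checks are presented and your slightly informal element-wise phrasing ``for each $a$, the map $g(a)$''; the paper phrases everything diagrammatically, but the content is identical.
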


\begin{proof}
Forgetting about the $S$- and $T$-actions for now, a map $\vp: A\tens_R B \rightarrow C$ is the same as a map $\vp: A\tens B\rightarrow C$ such that the following square diagram commutes.
$$
\xymatrix{
A\tens R \tens B \ar[r]^-{A\tens \mu_B} \ar[d]^{\mu_A \tens B} &A\tens B \ar[d]^\vp \\
A\tens B\ar[r]^-{\vp} &C
}
$$
Using the $(-\tens B,\hom(B,-))$ adjunction, this is equivalent to a map $\Bar{\vp}$ with a commuting diagram
$$
\xymatrix{
A\tens R \ar[rd]^-{g} \ar[d]^{\mu_A}&\\
A \ar[r]^-{\Bar{\vp}} &\hom(B,C)
}
$$
where $g$ is the adjoint map to $\vp\circ (A\tens \mu_B)$. Note that $g$ is also adjoint to the map $(\mu_B)^* \circ \Bar{\vp}: A\rightarrow \hom(R\tens B,C)$, so the latter diagram means exactly that $\Bar{\vp}$ is a map of right $R$-modules.

Secondly, a map $\vp : A\tens B \rightarrow C$ is a map of right $T$-modules if and only if
$$
\xymatrix{
A\tens B\tens T \ar[r]^-{A\tens \mu_B} \ar[d]^{\vp\tens T} & A\tens B \ar[d]^\vp \\
C\tens T \ar[r]^-{\mu_C} &C
}
$$
commutes, which is the case if and only if
$$
\xymatrix{
A \ar[rd]^-{(\mu_B)^*} \ar[d]^{\overline{\vp\tens T}}&\\
\hom(B\tens T, C\tens T) \ar[r]^-{(\mu_C)^*} &\hom(B\tens T,C)
}
$$
does, which is equivalent to the property that the adjoint map $\Bar{\vp}$ to $\vp$ factors through the equalizer $\hom_T(B,C)$.

In addition, using a similar argument, $\vp: A\tens B\rightarrow C$ is a left $S$-module map if and only if $\Bar{\vp}: A\rightarrow \hom(B,C)$ is a left $S$-module map.
\end{proof}

A monoid in $({_R\Mod_R}, \tens_R, R)$ is called an $R$-algebra, and whenever we speak of a module over an $R$-algebra it will be with respect to this monoidal biclosed category of $R$-bimodules.

\subsection{Transfer of model structures}

Many people, going back to Kan, have discussed the conditions under which one can transfer a cofibrantly generated model structure along an adjoint pair of functors. The specific problem of transfer of a model structure from a monoidal model category to the category of its monoids and the category of modules over a given monoid is explored in \cite{schwede-shipley} in the symmetric monoidal case, and in \cite{hovey2} and \cite{batanin-berger} in a more general context. These last two sources are the ones we will be referring to here. 

Given a cofibrantly generated monoidal biclosed category $\C$ like in the previous section, there are two important assumptions we want a model structure on it to satisfy. First, that the \textit{monoid axiom} holds in $\C$. This axiom is due to \cite{schwede-shipley} (Definition 3.3) and states that any transfinite composition of pushouts of maps of the form $X\tens f$ or $f \tens X$, where $f$ is a trivial cofibration, is a weak equivalence. As stated in Lemma 3.5 of that same source, it is enough to check it for $f$ a generating trivial cofibration.

\begin{proposition}[Theorem 2.1 in \cite{hovey2}]\label{prop:HoveyTransfer}
Let $\C$ be a cofibrantly generated monoidal model category with generating cofibrations $I$ and generating trivial cofibrations $J$. Let $A$ be a monoid in $\C$ and assume that the domains of $I$ (resp.$J$) are small relative to $A\tens I$-cell (resp. $A\tens J$-cell). Suppose moreover that every map of $A\tens J$-cell is a weak equivalence in $\C$, which is a condition that can be seen as a monoid axiom specific to $A$ (this holds trivially when $A$ is cofibrant). Then, the category $_A\Mod$ inherits a cofibrantly generated model structure through $A\tens -$.
\end{proposition}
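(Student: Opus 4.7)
The plan is to invoke Kan's transfer principle for cofibrantly generated model structures along the free--forgetful adjunction $F = A\tens - : \C \rightleftarrows {_A\Mod} : U$. The transferred classes are defined by: a morphism $f$ in ${_A\Mod}$ is a weak equivalence (resp. fibration) if and only if $U(f)$ is one in $\C$, with cofibrations characterized by the left lifting property against trivial fibrations. The candidate generating sets are then $FI = \{A\tens i \mid i\in I\}$ and $FJ = \{A\tens j \mid j\in J\}$, and by adjunction a map in ${_A\Mod}$ is $FI$-injective (resp. $FJ$-injective) precisely when its underlying map in $\C$ is $I$-injective (resp. $J$-injective), i.e. a trivial fibration (resp. fibration).

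First I would verify that ${_A\Mod}$ is complete and cocomplete: limits are created by $U$, and colimits exist by the standard construction as reflexive coequalizers in $\C$ of free module diagrams (this uses only that $\C$ has the relevant colimits and that $A\tens -$ preserves them, which follows from the biclosed structure on $\C$). The 2-out-of-3 property and closure under retracts for weak equivalences and fibrations are immediate from their definitions in terms of $U$.

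Next, I would apply the small object argument to $FI$ and $FJ$ to produce the two required functorial factorizations. The smallness hypotheses in the statement are exactly what is needed: the domains of $I$ (resp.\ $J$) are small relative to $A\tens I$-cell (resp.\ $A\tens J$-cell), and via the adjunction this transfers into smallness of the domains of $FI$ (resp.\ $FJ$) relative to $FI$-cell (resp.\ $FJ$-cell) in ${_A\Mod}$. The factorization of a map as a relative $FI$-cell complex followed by an $FI$-injective map gives (cofibration, trivial fibration), while factorization using $FJ$ gives (relative $FJ$-cell, fibration).

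The main obstacle, and the content of the hypothesis that relative $A\tens J$-cell complexes are weak equivalences in $\C$, is to verify that every relative $FJ$-cell complex in ${_A\Mod}$ is a weak equivalence when viewed in $\C$ via $U$; this is precisely what promotes the second factorization to (trivial cofibration, fibration). The subtle point is that pushouts in ${_A\Mod}$ along a morphism $A\tens j$ are not pushouts in $\C$, but admit a natural filtration whose associated graded is expressible in terms of tensor products with $j$ of the kind governed by the monoid axiom; this is what the assumption on $A\tens J$-cell encodes (and what makes it trivially satisfied when $A$ is cofibrant, since then $A\tens -$ preserves trivial cofibrations by SM7). Once this is granted, the lifting axioms follow by transposing across the adjunction: the LLP of $FJ$-cell complexes against $FJ$-injectives (and of cofibrations against trivial fibrations) is inherited from the corresponding property in $\C$, completing the verification of the model category axioms.
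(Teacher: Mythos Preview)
The paper does not supply its own proof of this proposition; it is quoted verbatim as Theorem~2.1 of \cite{hovey2}, so there is nothing to compare your argument against on the paper's side. Your outline is the standard Kan transfer argument and is essentially correct in structure.

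However, one step is misstated. You write that ``pushouts in ${_A\Mod}$ along a morphism $A\tens j$ are not pushouts in $\C$'' and invoke a filtration. That is the situation for \emph{monoids}, not for modules. Since $\C$ is biclosed, $A\tens -$ preserves all colimits, and hence the forgetful functor $U:{_A\Mod}\to\C$ \emph{creates} colimits (this is even noted explicitly later in the paper, just after Proposition~\ref{prop:freemonoid}). Consequently a relative $FJ$-cell complex in ${_A\Mod}$, once passed through $U$, is literally a relative $A\tens J$-cell complex in $\C$, and the hypothesis says precisely that such maps are weak equivalences. No filtration or associated-graded analysis is needed here; your ``main obstacle'' dissolves immediately. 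The rest of your argument (smallness via adjunction, factorizations via the small object argument, lifting axioms) is fine.
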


In fact, corollary 2.2 in \cite{hovey2} tells us that when $A$ is cofibrant, not only does $_A\Mod$ inherit a projective model structure for free, but the forgetful functor $U: _A\Mod \rightarrow \C$ creates cofibrations as well. Besides, we will assume that the category $\C$ is \textit{compactly generated} as in Definition 2.4 of \cite{batanin-berger} recalled below (see section 2 of that paper for a more thorough discussion).

\begin{definition}\label{def:CompactlyGenerated}
A class $K$ of morphisms in $\C$ is said to be {\it saturated} if it is closed under cobase change, transfinite composition and retracts. For $K$ saturated, we say the class $W$ of weak equivalences in $\C$ is {\it $K$-perfect} if $W$ is closed under filtered colimits along morphisms in $K$.

A monoidal model category $\C$ is said to be {\it $K$-compactly generated} if
\begin{enumerate}
\item it is cofibrantly generated,
\item the class of weak equivalences of $\C$ is $K$-perfect, and
\item every object of $\C$ is small relative to $K$.
\end{enumerate}
Moreover $\C$ is simply called {\it compactly generated} when it is $I^\tens$-compactly generated, where $I^\tens$ is the monoidal saturation of $I$, that is, the closure of the class of cofibrations under tensoring with arbitrary objects of $\C$.
\end{definition}

The category of compactly generated weak Hausdorff spaces, and suitable categories of simplicial sets or spectra are for example compactly generated and satisfy the monoid axiom; see section 2 for further discussion on this.

\begin{proposition}[Theorem 4.1 in \cite{batanin-berger}]\label{prop:BataninBergerTransfer}
If a compactly generated monoidal model category $\C$ satisfies the monoid axiom, the category of monoids in $\C$ inherits a transfered model structure through the adjunction given by the free monoid construction and the forgetful functor.

In addition, if $\D$ is a small $\C$-enriched category, then we get a projective model structure on $[\D,\C]$. And if $\D$ possesses a symmetric monoidal structure, the Day convolution product induces a compactly generated monoidal model category structure on $[\D,\C]$ which satisfies the monoid axiom.
\end{proposition}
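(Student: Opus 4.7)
The plan is to apply Kan's transfer principle to the free-forgetful adjunction $\bbF : \C \rightleftarrows \mathrm{Mon}(\C) : U$, where $\bbF(X) = \bigsqcup_{n\geq 0} X^{\tens n}$ carries the concatenation monoid structure. Declare a map $f$ in $\mathrm{Mon}(\C)$ to be a weak equivalence (resp.\ a fibration) exactly when $U(f)$ is one in $\C$, and take $\bbF(I)$ (resp.\ $\bbF(J)$) as generating (trivial) cofibrations. Smallness of the domains of these generators relative to the appropriate cell complexes follows from compact generation of $\C$, since the underlying objects of relative $\bbF(I)$-cell complexes lie in the saturated class $I^{\tens}$, with respect to which every object of $\C$ is small.

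The key technical step is to show that for any $f\in J$, any pushout in $\mathrm{Mon}(\C)$ of $\bbF(f)$ along an arbitrary monoid map $\bbF(X) \to A$ yields an underlying weak equivalence $A \to A'$. I would express $A'$ as a sequential colimit $\colim_n A_n$ in $\C$ with $A_0 = A$, where each map $A_n\to A_{n+1}$ is obtained as a pushout in $\C$ of a morphism built from iterated $n$-fold pushout-products of $f$ interleaved with tensor factors of $A$ on either side. The monoid axiom, applied layerwise, makes each $A_n\to A_{n+1}$ a weak equivalence, and $K$-perfection of weak equivalences under filtered colimits (part of compact generation) yields that $A\to A'$ is one. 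This filtration is the genuine combinatorial input; I would follow the approach of Batanin--Berger, which additionally tracks the symmetric group actions appearing on pushout-product layers in the symmetric monoidal setting through the notion of a freely powered model category.

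For the projective model structure on $[\D,\C]$, I would apply the same transfer principle along the adjunction $\prod_{\Ob(\D)} \C \rightleftarrows [\D,\C]$, whose left adjoint sends $(X_d)_{d\in \Ob(\D)}$ to $\bigsqcup_d \D(d,-)\tens X_d$. The generating (trivial) cofibrations take the form $\D(d,-)\tens i$ for $i \in I$ (resp.\ $J$), and smallness, the monoid axiom, and compact generation for $[\D,\C]$ reduce pointwise to the corresponding properties for $\C$, since evaluation at $d'$ turns $\D(d,-)\tens X$ into a coproduct of copies of $X$ indexed by $\D(d,d')$.

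When $\D$ carries a symmetric monoidal structure and $[\D,\C]$ is equipped with Day convolution, the pushout-product axiom reduces via the Yoneda lemma to SM7 in $\C$: the Day convolution of $\D(d,-)\tens A$ with $\D(d',-)\tens B$ is $\D(d\otimes d',-)\tens (A\tens B)$, so the pushout-product of $\D(d,-)\tens i$ with $\D(d',-)\tens j$ equals $\D(d\otimes d',-)\tens (i\Box j)$, and SM7 in $\C$ implies the corresponding statement in $[\D,\C]$. Compact generation and the monoid axiom transfer pointwise as well, so the first part of the proposition then applies to $[\D,\C]$ itself, producing the transferred model structure on monoids there. The main obstacle throughout is the combinatorial filtration argument in the monoid transfer step; once that is in place, the Day convolution piece is essentially formal.
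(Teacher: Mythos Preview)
The paper gives no proof of this proposition: it is stated purely as a citation of Theorem~4.1 in Batanin--Berger, and the surrounding text simply uses it as a black box to obtain the model structures on $\mathrm{Mon}\,\C$ and on $[\D,\C]$. So there is nothing in the paper for your proposal to be compared against.

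That said, your sketch is a reasonable outline of the argument behind the cited theorem. A couple of remarks. The filtration of the pushout $A\to A'$ along a free map is indeed the key input; the standard reference for this in the non-symmetric monoidal case is Schwede--Shipley (Lemma~6.2 of \emph{Algebras and modules in monoidal model categories}), and Batanin--Berger build on that. Your mention of ``freely powered'' is Lurie's terminology and is not what Batanin--Berger use; their argument is phrased in terms of $h$-cofibrations and the monoid axiom, without needing symmetric-group considerations for the plain monoid case. Also, in the Day convolution step your computation of the pushout-product of representable generators is correct when $\C$ is closed symmetric monoidal, which is implicit here; that is the point at which the argument becomes, as you say, essentially formal.
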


We can use the two previous results to get the model structures we want. By the first statement of \ref{prop:BataninBergerTransfer}, we obtain a model structure on the category $\mathrm{Mon}\C$ of monoids in $\C$ which is cofibrantly generated with $I_{\mathrm{Mon}\C}= F(I)$ and $J_{\mathrm{Mon}\C}= F(J)$ where $F$ is the free monoid functor. On the other hand, we can use the second part of Proposition \ref{prop:BataninBergerTransfer} with $\D= R^{op} \times R$ to get a projective model structure on $_R\Mod_R$.

\begin{proposition}\label{prop:BimodulesMonoidalCategory}
Assuming $R$ is cofibrant in $\C$, the category of $R$-bimodules \linebreak $(_R \Mod _R, \tens_R, R)$ is a monoidal model category.
\end{proposition}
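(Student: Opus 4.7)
The plan is to first invoke Proposition \ref{prop:BataninBergerTransfer} applied to $\D = R^{\op} \times R$ (viewed as a one-object $\C$-enriched category with morphism object $R^{\op} \tens R$) to obtain a projective, cofibrantly generated model structure on ${_R\Mod_R}$. Its generating (trivial) cofibrations are the free $R$-bimodule maps $\{R \tens i \tens R : i \in I\}$ and $\{R \tens j \tens R : j \in J\}$. With this in hand, the claim reduces to verifying axiom SM7 for the pair $(\tens_R, R)$.

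For the pushout-product half of SM7, standard closure arguments (pushouts, transfinite compositions and retracts, all of which are preserved by $\tens_R$ in each variable) reduce the problem to checking on generating cofibrations. The key computational input I would use is the canonical isomorphism
$$
(R \tens K \tens R) \tens_R (R \tens L \tens R) \;\cong\; R \tens K \tens R \tens L \tens R,
$$
from which the pushout-product $(R \tens i \tens R) \,\square_R\, (R \tens j \tens R)$ identifies with $R \tens P \tens R$, where $P$ is the pushout-product in $\C$ of $i \tens R : K \tens R \to K' \tens R$ with $j : L \to L'$. Both $i \tens R$ and $j$ are cofibrations in $\C$ (using SM7 in $\C$ and the cofibrancy of $R$), so a further application of SM7 in $\C$ yields that $P$ is a cofibration, trivial if either $i$ or $j$ is. Since the free-bimodule functor $R \tens - \tens R$ is left adjoint to the forgetful functor $U : {_R\Mod_R} \to \C$, it sends the saturated class of $I$ into the saturated class of $\{R \tens i \tens R\}$, so $R \tens P \tens R$ is a (trivial) cofibration in ${_R\Mod_R}$.

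For the unit axiom, I need to show that for a cofibrant replacement $R_c \xrightarrow{\sim} R$ in ${_R\Mod_R}$ and any cofibrant $R$-bimodule $A$, the induced map $R_c \tens_R A \to R \tens_R A = A$ is a weak equivalence. My strategy is to use the two-sided bar construction $B(R,R,R)$ as an explicit such cofibrant replacement: its simplicial degrees are free $R$-bimodules, and under the standing compact-generation and monoid-axiom assumptions of this section one can show that its realization is cofibrant in ${_R\Mod_R}$ and that the augmentation to $R$ is a weak equivalence. Then the identification $B(R,R,R) \tens_R A \cong B(R,R,A)$, combined with the extra-degeneracy argument and the observation that cofibrant $R$-bimodules are cofibrant in $\C$ (since the generating cofibrations $R \tens i \tens R$ are cofibrations in $\C$ when $R$ is cofibrant), yields the required weak equivalence to $A$.

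The main obstacle is the unit axiom rather than SM7: once the generating cofibrations are identified, verifying SM7 is essentially bookkeeping that leverages SM7 in $\C$. The unit axiom, however, depends on genuine homotopical properties of the bar construction and crucially on the cofibrancy of $R$ in $\C$ — without this hypothesis, the map $R_c \tens_R A \to A$ would generally fail to be a weak equivalence, which is precisely why this assumption appears in the statement of the proposition.
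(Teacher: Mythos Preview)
Your verification of the pushout-product axiom is essentially the paper's: reduce to generators, use the isomorphism $(R\tens K\tens R)\tens_R(R\tens L\tens R)\cong R\tens K\tens R\tens L\tens R$, and invoke SM7 in $\C$ together with cofibrancy of $R$.

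For the unit axiom your route diverges from the paper's, and there is a gap. You want to use $B(R,R,R)$ (the bar construction of $R$ as a monoid in $\C$, so $B_n=R^{\tens(n+2)}$) as the cofibrant replacement $R_c$. But for the realization to be cofibrant in ${_R\Mod_R}$ you need the simplicial object to be Reedy cofibrant, and already the first latching map $R\tens 1\tens R\to R\tens R\tens R$ is $R\tens(1\to R)\tens R$; this is a cofibration of bimodules only if $1\to R$ is a cofibration in $\C$. The hypothesis of the proposition is merely that $R$ is cofibrant, not that the unit map is a cofibration, and neither compact generation nor the monoid axiom supplies this. Without cofibrancy, $B(R,R,R)$ is not a legitimate choice of $R_c$, and comparing it to an actual cofibrant replacement would require precisely the Ken Brown argument you are trying to avoid.

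The paper's argument is shorter and avoids the bar construction entirely. It first observes that since each generating cofibration $R\tens i\tens R$ is a cofibration in $\C$ (using cofibrancy of $R$), every $R$-cofibrant bimodule is cofibrant in $\C$; in particular $R_c$ is cofibrant in $\C$. Thus $R_c\to R$ is a weak equivalence between cofibrant objects of $\C$. Reducing to the cell case $A=R\tens A'\tens R$ with $A'$ cofibrant in $\C$, one has $R_c\tens_R A\cong R_c\tens A'\tens R$, and Ken Brown's lemma applied to the left Quillen functor $-\tens A'\tens R$ finishes the argument. This uses only the hypothesis actually stated.
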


\begin{proof}
We have explained above how this is a monoidal biclosed structure. As mentioned in Lemma 3.5 of \cite{schwede-shipley}, the pushout-product axiom only needs to be checked for generating cofibrations and trivial cofibrations, which means it is enough to check it for $R\tens f \tens R$ and $R\tens g \tens R$ where $f:A\rightarrow B$, $g:X\rightarrow Y$ are generating cofibrations in $\C$. The pushout of $R\tens f \tens R \tens_R R \tens X \tens R$ and $R\tens A \tens R \tens_R R \tens g \tens R$ is isomorphic to the pushout of $R\tens f \tens R\tens X \tens R$ and $R\tens A \tens R\tens g \tens R$. Because $R$ is cofibrant in $\C$, tensoring with $R$ on one side or the other is left Quillen i.e. preserves cofibrations and trivial cofibrations, which means we can simply use the pushout-product axiom in $\C$ with $R\tens f \tens R$ and $g\tens R$.

We also need to prove the extra condition about the unit, namely that for any $R$-bimodule $A$ that is $R$-cofibrant (meaning, cofibrant as an $R$-bimodule) the map $R_c\tens_R A \rightarrow R\tens_R A$ is a weak equivalence, where $R_c$ is an $R$-cofibrant replacement of $R$. The transferred model structure on ${_R}\Mod_R$ is such that $A$ is $R$-cofibrant if and only if the map $*\rightarrow A$ is a retract of transfinite compositions of pushouts of maps like $R\tens f \tens R$ where $f$ is a generating cofibration of $\C$. Therefore it is enough to consider $A=R\tens A' \tens R$ where $A'$ is a cofibrant object of $\C$. In addition, since $R$ is cofibrant in $\C$, any $R$-cofibrant object is cofibrant in $\C$ (this is because every map of the form $R\tens f \tens R$ as above is a cofibration in $\C$); in particular this is the case for $R_c$. Therefore the map $R_c\rightarrow R$ is a weak equivalence of cofibrant objects in $\C$, meaning that by Ken Brown's lemma (Lemma 1.1.12 in \cite{hovey}), it is sent to a weak equivalence by the functor $-\tens A' \tens R$, hence the map
$$
R_c \tens_R A \rightarrow R\tens_R A
$$
is one as well. 
\end{proof}

It is then immediate that we can transfer this model structure on $R$-bimodules to categories of $R$-algebras and of $C$-modules for $C\in \Alg_R$. Indeed, assuming $C$ is cofibrant as an $R$-bimodule, which as we will see later is a reasonable assumption to make, we can then use \ref{prop:HoveyTransfer} to get a projective model structure on $\Mod_C$. Moreover, for any generating cofibration $R\tens f \tens R$ of $_R\Mod_R$, there is a canonical isomorphism $R\tens f \tens R \tens_R M \cong R\tens f \tens M$. Because $\tens$ distributes over colimits, in particular it distributes over transfinite composition of pushouts. Therefore, if the monoid axiom holds in $\C$ it holds in $_R\Mod_R$ as well, implying that we can transfer its model structure to the category $\Alg_R$ of $R$-algebras.

\subsection{The Koszul dual of an augmented $R$-algebra}

In the classical construction of the Koszul dual of an algebra, one starts by fixing a ground field $\mathbb k$ and an augmented algebra $A\rightarrow \mathbb k$. It is useful to keep in mind the analogies between this and the general context we're interested in. In the remainder of this section, $R$ is a cofibrant monoid in $\C$ and $C$ is an $R$-algebra equipped with an augmentation $\epsilon : C\rightarrow R$ ($\epsilon$ is a map of monoids). Notice that the map
$$
R\tens_R C \longrightarrow R\tens_R R \longrightarrow R
$$
obtained from $\epsilon$ and the multiplicative structure of $R$ gives $R$ a right $C$-module structure (this map is essentially the same as $\epsilon$).

\begin{definition}\label{def:KoszulDual}
The Koszul dual of $C$, denoted $K(C)$, is the derived endomorphism object of $R$ in the category of right $C$-modules $\hom_C( R_{cf}, R_{cf})$, where $R_{cf}$ denotes a cofibrant fibrant approximation of $R$ in $\Mod_C$. 
\end{definition}

As we will see below, it is enough to ask $R_{cf}$ to be a cofibrant fibrant approximation and not a replacement, that is, we do not the weak equivalence $ R_{cf} \xrightarrow{\sim} R$ to be a fibration. Note that alternatively, the Koszul dual could be defined as the derived endomorphism object of $R$ in left $C$-modules, using the other homomorphism object $\homl$.


The Koszul dual is naturally equipped with an $R$-algebra structure with the composition of endomorphisms as multiplication. This kind of argument is classical but it is useful to provide the reader with a careful proof of it in our setting.

\begin{proposition}
Let $M$ be a right $C$-module. The endomorphism object $\hom_C(M,M)$ has a natural $R$-algebra structure consisting in the composition of endomorphisms.
\end{proposition}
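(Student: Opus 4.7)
The plan is to mimic, within the monoidal biclosed category $(_R\Mod_R, \tens_R, R)$, the standard construction that identifies the internal hom object $\hom(M,M)$ of a monoidal closed category with the endomorphism monoid of $M$. Since $C$ is a monoid in $_R\Mod_R$ and $M$ is a right $C$-module, the preceding discussion (and the analogue of Proposition \ref{prop:BimodulesMonoidalBiclosed} applied with the ambient category $_R\Mod_R$ in place of $\C$) gives the adjunction
$$
\mor_{\Mod_C}(N \tens_R M, M) \cong \mor_{_R\Mod_R}(N, \hom_C(M,M))
$$
natural in $N \in {_R\Mod_R}$, with counit $\ev : \hom_C(M,M) \tens_R M \to M$, a map of right $C$-modules. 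The object $\hom_C(M,M)$ is already known to lie in $_R\Mod_R$ by the earlier proposition on bimodule structures of relative hom objects.

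First I would define the unit $\eta : R \to \hom_C(M,M)$ as the map in $_R\Mod_R$ corresponding under the above adjunction to the composite $R \tens_R M \xrightarrow{\cong} M \xrightarrow{\id_M} M$, which is a right $C$-module map. Next I would construct the multiplication $\mu : \hom_C(M,M) \tens_R \hom_C(M,M) \to \hom_C(M,M)$ as the adjoint of the iterated evaluation
$$
\hom_C(M,M) \tens_R \hom_C(M,M) \tens_R M \xrightarrow{\id \tens_R \ev} \hom_C(M,M) \tens_R M \xrightarrow{\ev} M,
$$
noting that this composite is a map of right $C$-modules because $\ev$ is.

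Then I would verify the monoid axioms by unwinding the adjunction. For associativity, both composites $\mu \circ (\mu \tens_R \id)$ and $\mu \circ (\id \tens_R \mu)$ correspond under the adjunction with $N = \hom_C(M,M)^{\tens_R 3}$ to the same map $\hom_C(M,M)^{\tens_R 3} \tens_R M \to M$, namely the threefold iterated evaluation; this is a straightforward diagram chase using the naturality of $\ev$ and the fact that $\tens_R$ is functorial. The unit axioms are similarly immediate: $\mu \circ (\eta \tens_R \id)$ and $\mu \circ (\id \tens_R \eta)$ correspond to the evaluation map itself, since $\eta$ was defined to correspond to the identity of $M$.

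The main obstacle is purely bookkeeping, namely checking that every map produced by these adjoint correspondences is actually a morphism in $_R\Mod_R$ (for $\eta$ and $\mu$) or in $\Mod_C$ (for the intermediate adjoints), rather than merely a morphism in $\C$. This is ensured at each step because the evaluation counit $\ev$ is by construction a right $C$-module map and the tensor product $\tens_R$ preserves $R$-bimodule structures; no genuinely new ingredient beyond the structural results of the previous subsections is needed.
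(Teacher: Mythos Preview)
Your proposal is correct and follows essentially the same approach as the paper: both define the multiplication as the adjoint of the iterated evaluation map and verify the monoid axioms via the adjunction. The only organizational difference is that the paper first equips $\hom_R(M,M)$ with its monoid structure and then checks that the subobject $\hom_C(M,M)$ is closed under composition (by verifying that the restricted evaluation is a right $C$-module map), whereas you work directly with the adjunction $\mor_{\Mod_C}(N\tens_R M, M)\cong \mor_{_R\Mod_R}(N,\hom_C(M,M))$, which builds the $C$-equivariance in from the start; the content is the same.
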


\begin{proof}
Recall that $(_R\Mod_R, \tens_R,R, \hom_R(-,-))$ was proven to be a monoidal biclosed category in Proposition \ref{prop:BimodulesMonoidalCategory}. Let $M$ be a right $C$-module. First, notice that $\hom_R(M,M)$ is naturally equipped with a monoid structure because $_R\Mod_R$ is enriched over itself. Explicitly, the multiplication is given by the composition map defined as the adjoint of
$$
\ev_{M,M} \circ (\id \tens_R \ev_{M,M}) : \hom_R(M,M) \tens_R \hom_R(M,M) \tens_R M \rightarrow M
$$
To see that the subobject $\hom_C(M,M)$ is stable under composition, we show that
$$
\hom_C(M,M)\tens_R \hom_C(M,M) \tens_R M \rightarrow M
$$
is a right $C$-module map. It is enough to show that $\ev_{M,M}$ restricted to $\hom_C(M,M)\tens M$ is one, which is true since the following diagram commutes by definition of $\hom_A(M,M)$.
$$
\xymatrix{
\hom_A(M,M)\tens M \ar[d]^{\id \tens \mu_M} \ar[r] & M\tens A \ar[d]^{\mu_M} \\
\hom_A(M,M)\tens M \ar[r] & M
}
$$
\end{proof}

The reader might object that the Koszul dual construction from Definition \ref{def:KoszulDual} was only defined up to a choice of cofibrant fibrant approximation of $R$ in $\Mod_C$. And even though it is easy to see that two different choices yield two definitions of $K(C)$ that are equivalent as $R$-bimodules, we need to know that they are equivalent as $R$-algebras.

\begin{proposition}\label{prop:TylerLawsonThing}
Given $\bar R$  and $\hat{R}$ two fibrant cofibrant approximations of $R$ as a right $C$-module, there is a zigzag of weak equivalences of $R$-algebras
$$
\hom_C(\bar R, \bar R) \xleftarrow{\sim} \bullet\xrightarrow{\sim} \hom_C(\hat{R}, \hat{R})
$$
\end{proposition}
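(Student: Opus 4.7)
The plan is to construct a single intermediate $R$-algebra of ``compatible triples'' of endomorphisms along a factorization of a weak equivalence between $\bar R$ and $\hat R$, and to show that both projections onto the outer endomorphism algebras are weak equivalences.

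First, both $\bar R$ and $\hat R$ are cofibrant-fibrant in $\Mod_C$ and each weakly equivalent to $R$; hence they represent the same isomorphism class of $\Ho(\Mod_C)$. Since morphisms between cofibrant-fibrant objects in the homotopy category are represented by honest morphisms, this isomorphism lifts to a weak equivalence $f : \bar R \to \hat R$ in $\Mod_C$. Factor $f = p \circ i$ with $i : \bar R \to R'$ a trivial cofibration and $p : R' \to \hat R$ a trivial fibration, so that the intermediate object $R'$ is also cofibrant-fibrant.

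Next, define
$$
E = \bigl\{(\phi, \chi, \psi) : \chi i = i \phi \text{ and } \psi p = p \chi \bigr\} \subset \hom_C(\bar R, \bar R) \times \hom_C(R', R') \times \hom_C(\hat R, \hat R).
$$
The conditions are preserved under composition and satisfied by $(\id, \id, \id)$, so $E$ is a sub-$R$-algebra of the product, and the outer projections $\pi_{\bar R} : E \to \hom_C(\bar R, \bar R)$ and $\pi_{\hat R} : E \to \hom_C(\hat R, \hat R)$ are $R$-algebra maps; these are the two morphisms that will realize the desired zigzag.

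The remaining task is to prove these projections are weak equivalences. The axiom SM7 for $\hom_C$ as a right Quillen bifunctor gives: all five hom objects in sight are fibrant $R$-bimodules; the ``backward'' maps $i^* = \hom_C(i, R')$ and $p_* = \hom_C(R', p)$ are trivial fibrations; and the ``forward'' maps $i_* = \hom_C(\bar R, i)$ and $p^* = \hom_C(p, \hat R)$ are weak equivalences between fibrant objects by Ken Brown's lemma. By construction $E$ is the limit of the fence
$$
\hom_C(\bar R, \bar R) \xrightarrow{i_*} \hom_C(\bar R, R') \xleftarrow{i^*} \hom_C(R', R') \xrightarrow{p_*} \hom_C(R', \hat R) \xleftarrow{p^*} \hom_C(\hat R, \hat R),
$$
and an iterated-pullback argument, using the fibrations $i^*$ and $p_*$ at each stage to ensure the pullbacks are homotopy pullbacks (replacing intermediate objects by fibrant replacements in the model category of $R$-algebras if needed to track the accumulated homotopy-invariance), identifies $E$ with the homotopy limit of this diagram. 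Since all arrows in the diagram are weak equivalences, the projection from its homotopy limit to any vertex is a weak equivalence. The main obstacle is precisely ensuring that the strict limit computes the homotopy limit; the factorization of $f$ into a trivial cofibration and a trivial fibration is tailored so that SM7 upgrades the two backward maps from mere weak equivalences to actual fibrations, which is the key input that makes the homotopy-limit argument go through.
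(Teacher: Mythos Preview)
Your approach is close in spirit to the paper's, which also routes through ``endomorphisms of a diagram'' (it quotes Davis--Lawson, whose Corollary~2.8 is exactly the statement that the endomorphism algebra of a single acyclic fibration receives weak equivalences to the endomorphism algebras of source and target). The paper first produces a span of acyclic fibrations $\bar R \twoheadleftarrow \tilde R \twoheadrightarrow \hat R$ and then applies that corollary to each leg; the resulting zigzag therefore has \emph{three} intermediate algebras, and the $\bullet$ in the statement is only schematic.

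The gap in your argument is the final step, where you assert that the strict limit $E$ of the fence computes its homotopy limit. Having $i^*$ and $p_*$ be fibrations makes each of the two \emph{individual} pullbacks $E_i=\hom_C(\bar R,\bar R)\times_{\hom_C(\bar R,R')}\hom_C(R',R')$ and $E_p=\hom_C(R',R')\times_{\hom_C(R',\hat R)}\hom_C(\hat R,\hat R)$ into homotopy pullbacks; but $E=E_i\times_{\hom_C(R',R')}E_p$, and neither projection $E_i\to\hom_C(R',R')$ nor $E_p\to\hom_C(R',R')$ is known to be a fibration (they are base changes of $i_*$ and $p^*$, which are only weak equivalences). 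So this last pullback is not visibly a homotopy pullback, and the fence is not Reedy fibrant: the matching maps at $A_1$ and $A_3$ are $i_*$ and $p^*$. Your parenthetical about ``replacing intermediate objects by fibrant replacements in the model category of $R$-algebras'' does not repair this, since the problem is that certain \emph{maps} are not fibrations, and replacing them would destroy the subalgebra description of $E$.

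The easy fix is simply not to insist on a single middle object: use the zigzag
\[
\hom_C(\bar R,\bar R)\xleftarrow{\ \sim\ }E_i\xrightarrow{\ \sim\ }\hom_C(R',R')\xleftarrow{\ \sim\ }E_p\xrightarrow{\ \sim\ }\hom_C(\hat R,\hat R).
\]
The two outer maps are trivial fibrations (base changes of $i^*$ and $p_*$), and the two inner maps are weak equivalences because each of $E_i$, $E_p$ is a homotopy pullback of a square with one weakly equivalent pair of parallel sides. This is exactly the shape of argument the paper obtains by quoting Davis--Lawson.
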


\begin{proof}
Without loss of generality, we can assume that $\hat R$ is actually a cofibrant {\it replacement} of $R$ in $\Mod_C$.

The idea is to use an endomorphism object for the category of arrows in $\Mod_C$ as the middle object. Mostly, we will simply apply the results of section 2 of \cite{davis2014commutative}. In the proof of Lemma 2.3, Davis and Lawson show that if $x\xrightarrow{\sim} y$ is a weak equivalence between cofibrant fibrant objects in a model category $\M$, there exists a diagram of acyclic fibrations
$x \twoheadleftarrow \tilde{x} \twoheadrightarrow y$.

Now, coming back to our setting, there is a lift
$$
\xymatrix{
& \hat R \ar@{->>}[d]^\sim\\
\bar R \ar@{-->}[ru]^-\sim \ar[r]^-\sim &R
}
$$
Therefore we can use this lemma with $x=\bar R$ and $y=\hat R$ to get a zigzag of acyclic fibrations between $\bar R$ and $\hat R$. We can then conclude by applying Corollary 2.8 in the same source, stating that for any acyclic fibration between fibrant objects, there is a zigzag of weak equivalences of algebras between the endomorphisms of its source and those of its target.
\end{proof}


\begin{definition}
A more general construction can be considered by assigning to any pair $(C, M)$ where $C$ is an $R$-algebra and $M$ a right $C$-module the object
$$
K(C,M)= \R\homr(M,M)
$$
and similarly if $M$ is a {\it left} $C$-module with $\R\homl$ instead. Then, the Koszul dual of an augmented algebra $C$ is the particular case $K(C)=K(C,R)$.
\end{definition}

Note that the evaluation map equips $R_{cf}$ with a structure of left $K(C)$-module. we can consider the object
$$
K(K(C)) := K(K(C), R_{cf}) = \homl_{\mathrm{left }K(C)} (\tilde{R_{cf}},\tilde{R_{cf}})
$$
of derived endomorphisms of $R_{cf}$, where $\tilde{(-)}$ denotes a chosen cofibrant fibrant approximation functor in the category of left $K(C)$-modules. We wrote $\homl_{\mathrm{left }K(C)}$ here to emphasize the fact that we are considering equivariant homomorphism objects of left $K(C)$-modules, but as explained in the beginning of section 1.1 this notation is redundant since $\homl$ can only be used for equivariant left module homomorphisms and $\hom=\homr$ can similarly only be used for equivariant right module homomorphisms. Thus we will just write $\hom_{K(C)}^l$ instead.

A useful perk of opting for the derived endomorphisms of $R$ approach for the Koszul dual $R$-algebra of $C$ is that, in addition to the multiplicative law being given by endomorphism composition, a natural $R$-algebra map from $C$ into its double dual is easily obtained. There is a natural map
$$
\nu: C\xrightarrow{\phi} \homl_{K(C)}(R_{cf},R_{cf})\rightarrow K(K(C))
$$
where $\phi$ is defined as the adjoint map to the right $C$-module structure map $\hat{\phi}: R_{cf} \tens_R C \rightarrow R_{cf}$ on $R_{cf}$.

\begin{proposition}
The map $\phi$ is a well-defined algebra map. Moreover if $\tilde{(-)}$ is an $_R\Mod_R$-enriched functor (meaning it satisfies $\homl_{K(C)}$-functoriality conditions), then $\nu$ is an algebra map as well.
\end{proposition}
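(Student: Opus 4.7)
The plan is to verify three facts: (i) the map $\phi$ lands in the equalizer defining $\homl_{K(C)}(R_{cf}, R_{cf}) \subseteq \homl_R(R_{cf}, R_{cf})$; (ii) $\phi$ respects multiplication and units; and (iii) when $\tilde{(-)}$ is enriched, the second map in the factorization of $\nu$ is an algebra homomorphism.

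For (i), an element of the enriched endomorphism object lies in $\homl_{K(C)}(R_{cf}, R_{cf})$ precisely when it commutes with the left $K(C)$-action on $R_{cf}$. Via the $\bigl(R_{cf}\tens_R -,\ \homl_R(R_{cf},-)\bigr)$ adjunction, an $R$-bimodule morphism $C \to \homl_R(R_{cf}, R_{cf})$ factors through $\homl_{K(C)}(R_{cf}, R_{cf})$ if and only if its adjoint $R_{cf}\tens_R C \to R_{cf}$ is $K(C)$-equivariant on the left factor. For $\hat{\phi}$ this amounts to saying that the left $K(C)$-action on $R_{cf}$ commutes with the right $C$-action, which is exactly the defining property of $K(C) = \hom_C(R_{cf}, R_{cf})$: by construction its elements are right $C$-module morphisms, and hence commute with right $C$-multiplication.

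For (ii), I would transport the equation $\phi \circ \mu_C = \mu_{\homl} \circ (\phi \tens_R \phi)$ through the same adjunction into an equality of maps $R_{cf}\tens_R C \tens_R C \to R_{cf}$. Unwinding the definitions, the left-hand side yields $\hat{\phi} \circ (\id \tens_R \mu_C)$, while the right-hand side, using the evaluation-based composition law on $\homl_R(R_{cf}, R_{cf})$ (the left analogue of the formula already used for $\hom_R(M,M)$), yields $\hat{\phi} \circ (\hat{\phi} \tens_R \id)$. Equality of these two composites is precisely the associativity axiom for the right $C$-action on $R_{cf}$, and the unit condition reduces in the same way to the unitality axiom of that action.

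For (iii), an $_R\Mod_R$-enriched functor $\tilde{(-)}$ induces maps on enriched hom objects which by definition commute with evaluation, and hence with the composition map built from evaluation; since this composition law is exactly the multiplication on $\homl_{K(C)}$, the induced comparison $\homl_{K(C)}(R_{cf}, R_{cf}) \to K(K(C))$ is automatically an algebra homomorphism, and composing with the algebra map $\phi$ shows that $\nu$ is an algebra map. The main subtlety I would expect to handle carefully is that a cofibrant-fibrant \emph{approximation} (as opposed to a genuine replacement) typically yields a zigzag of weak equivalences rather than a single morphism of left $K(C)$-modules, so the comparison between endomorphism algebras should be read as a zigzag of algebra maps in the spirit of Proposition \ref{prop:TylerLawsonThing}; the enriched functoriality hypothesis is precisely what ensures that each stage of such a zigzag is multiplicative.
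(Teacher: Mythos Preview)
Your proof is correct and follows essentially the same approach as the paper: part (i) matches the paper's argument that the factorization through $\homl_{K(C)}$ is equivalent to $\hat\phi$ being a left $K(C)$-module map, which holds because $R_{cf}$ is a $(K(C),C)$-bimodule; part (ii) is exactly the paper's observation that $\phi$ is multiplicative because $\hat\phi$ is a $C$-module structure map, only spelled out more explicitly; and part (iii) is the paper's one-line remark that enriched functoriality preserves the composition law. Your closing paragraph about zigzags is unnecessary here: the enriched functor $\tilde{(-)}$ gives a single direct map $\homl_{K(C)}(R_{cf},R_{cf})\to\homl_{K(C)}(\tilde R_{cf},\tilde R_{cf})$, so no zigzag is needed to define $\nu$.
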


\begin{proof}
We will only prove the multiplicativity of these maps, since preservation of the unit is clear in both cases. First note that if $\phi$ is well defined, then it is automatically an algebra map because $\hat{\phi}$ satisfies associativity and unitality conditions, being a $C$-module structure map.

Showing that $\phi$ is well-defined consists in justifying the factorisation
$$
\xymatrix{
C\ar[rd]^-{\phi} \ar@{-->}[r] &\homl_{K(C)}(R_{cf}, R_{cf}) \ar[d] \\
&\homl(R_{cf}, R_{cf})
}	
$$
As mentioned in the proof of \ref{prop:BimodulesMonoidalBiclosed}, such a factorization exists if and only if the adjoint map $\hat{\phi}$ is a left $K(C)$-module map, that is, if and only if the following square diagram commutes.
$$
\xymatrix{
\hom_C(R_{cf}, R_{cf}) \tens_R R_{cf} \tens_R C \ar[d] \ar[r] &\hom_C(R_{cf}, R_{cf}) \tens_R R_{cf} \ar[d] \\
R_{cf} \tens_R C \ar[r] &R_{cf}
}
$$
This holds if and only if the evaluation map involving $\hom_C$ is a map of right $C$-modules. Note that the three equivalent properties we just stated are just expressing the fact that $R_{cf}$ is a $(K(C), C)$-bimodule. We can conclude by observing that since the inclusion $\hom_C(R_{cf},R_{cf}) \hookrightarrow \hom(R_{cf},R_{cf})$ obviously factors through $\hom_C(R_{cf},R_{cf})$, its adjoint map $\mathrm{ev} : \hom_C(R_{cf},R_{cf})\tens R_{cf} \rightarrow R_{cf}$ is a right $C$-module map.

Regarding the second statement, assuming that $\tilde{(-)}$ is $\homl_{K(C)}$-functorial, then in particular $\nu$ is multiplicative, since the composition law on the two endomorphism objects $\homl_{K(C)}(R_{cf}, R_{cf})$ and $K(K(C))$ is  simply the composition of endomorphisms.
\end{proof}

\begin{remark}
The condition of $\homl_{K(C)}$-functoriality we ask from the cofibrant-fibrant replacement functor is reasonable for the following reason. In the general setting of a monoid $A$ in a monoidal biclosed model category, an endofunctor in the category of left $A$-modules is $\homl$-enriched in the special case where there is a natural isomorphism $F(M\tens X)\cong F(M)\tens X$ for any $X$ (not necessarily an $A$-module), because in this case a map
$$
\homl_A(M,N) \rightarrow \homl_A(F(M),F(N))
$$
is provided by the adjoint to the application of $F$ to the evaluation map \linebreak $M\tens \hom_A(M,N)\rightarrow N$. This condition is satisfied by the bar construction $B(A,A,-)$ (discussed below) which will be our preferred choice of functor in what follows - in the context of left modules, whereas it will be $B(-,A,A)$ for right $A$-modules.
\end{remark}

The question of finding conditions under which the above defined map $\nu$ is a weak equivalence is interesting to explore (and will be in section 5 to some extent) but for now we will shift our focus to describing a classical example of a functor that gives a cofibrant fibrant approximation in a rather general case: the bar construction. We will make the following assumptions to simplify the notation and make our life easier.

{\bf Assumptions.} From now on, we assume that all objects in the monoidal model category $\C$ are fibrant. We consider a cofibrant monoid $R$ and an $R$-algebra $C$ that is $R$-cofibrant. We assume as well that the unit map $R\rightarrow C$ is a cofibration of $R$-bimodules.

\begin{definition}\label{def:BarConstruction}
Recall that for a right $C$-module $M$ and a left $C$-module $N$, the bar construction $B(M,C,N)$ is the geometric realization of the simplicial object $B_\bullet(M,C,N)$ (which is usually called the \textit{simplicial} bar construction) defined in simplicial degree $l$ by
$$
M\tens_R C^{\tens_R l} \tens_R N 
$$
The face maps are constructed using the right $C$-module structure on $M$, the multiplicative structure on $C$, and the left $C$-module structure on $N$ at the various $\tens_R$ symbols, while the degeneracy maps use the unit map $R\rightarrow C$ to increase the simplicial degree by one.
\end{definition}

This is a classical and extensively studied construction, but we have to be slightly careful here in explaining what we mean by the {\it geometric realization} of a simplicial object $X_\bullet$ in a general model category $\M$. The reader can turn to chapter 3 of \cite{fresse17} for a careful discussion on this topic involving the concept of Reedy frame.

We can make things a bit simpler by limiting ourselves to the case where $X_\bullet$ is Reedy cofibrant. Consider a Reedy cofibrant approximation $Z$ of the constant cosimplicial object $cc_*X$ in the category $\mathrm{cs}\M$ of cosimplicial simplicial objects in $\M$. One can then define the geometric realization of $X$ as the coend of $Z$.


\begin{remark}\label{remark:GeomReal}
This is a very general definition of geometric realization, with the disadvantage of course of not providing an explicit description without a given cofibrant replacement of $cc_*X$. We will not need to worry about this at all in later sections as any model category $\M$ we will consider comes equipped with a ``nice" external tensor product enabling us to make sense of the formula
$$
\vert X \vert = \int^{n\in \Delta} \Delta^n \smash X_n
$$
\end{remark}

\begin{proposition}
Let $R$ be a monoid in $\C$, and $C$ an $R$-cofibrant $R$-algebra. Given $M$ an $R$-cofibrant right $C$-module, the bar construction $B(M,C,C)$ is a cofibrant approximation of $M$ in $\Mod_C$.
\end{proposition}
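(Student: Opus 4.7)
The plan is to verify the two requirements of a cofibrant approximation separately: (i) $B(M,C,C)$ is cofibrant in $\Mod_C$; and (ii) the augmentation $\epsilon: B(M,C,C) \to M$, induced by the right $C$-action $\mu_M: M \tens_R C \to M$ on $B_0 = M\tens_R C$, is a weak equivalence of right $C$-modules. Throughout, $B_\bullet(M,C,C)$ is regarded as a simplicial object in $\Mod_C$, with $B_n = M \tens_R C^{\tens_R n} \tens_R C$ equipped with the right $C$-module structure given by multiplication on the rightmost $C$ factor. All face and degeneracy maps are right $C$-equivariant for this structure (the inner faces use $\mu_C$ and are $C$-linear by associativity of multiplication, while degeneracies insert $\eta: R \to C$), and $\epsilon$ is a right $C$-module map by associativity of the $C$-action on $M$.

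For (ii), I would exhibit an extra degeneracy $s_{-1}$ on the augmented simplicial object $B_\bullet(M,C,C) \to M$, defined in each degree by inserting $\eta$ in the rightmost slot via $X \cong X \tens_R R \xrightarrow{\id \tens \eta} X \tens_R C$. A direct check of the simplicial identities shows that this makes $\epsilon$ a split augmented simplicial object, from which one obtains, after passing to geometric realization, a weak equivalence $B(M,C,C) \xrightarrow{\sim} M$ in $\C$. Since weak equivalences in $\Mod_C$ are created in $\C$, this is the required weak equivalence. Note that $s_{-1}$ need not itself be right $C$-equivariant; this is not an obstruction, as we only need $\epsilon$ to be a $C$-module map.

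For (i), I would show that $B_\bullet(M,C,C)$ is Reedy cofibrant in $\Mod_C$; combined with Remark \ref{remark:GeomReal} and standard Reedy theory, its realization is then cofibrant. Writing $B_n = \tilde B_n \tens_R C$ with $\tilde B_n = M \tens_R C^{\tens_R n}$, each $B_n$ is the free right $C$-module on the $R$-bimodule $\tilde B_n$. The degeneracies of $B_\bullet$ in $\Mod_C$ are obtained by applying the left Quillen functor $- \tens_R C: {_R}\Mod_R \to \Mod_C$ to the degeneracies of the degenerate structure on $\tilde B_\bullet$ in $_R\Mod_R$ (which likewise insert $\eta$ in the inner positions), and since $-\tens_R C$ preserves colimits, the $n$-th latching map in $\Mod_C$ is obtained by applying $-\tens_R C$ to the $n$-th latching map of $\tilde B_\bullet$ in $_R\Mod_R$. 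This latter latching map is classically identified with $M \tens_R (\Box^n \eta)$, where $\Box^n \eta$ denotes the iterated pushout-product of $n$ copies of the cofibration $\eta: R \rightarrowtail C$ in $_R\Mod_R$. Iterating the pushout-product axiom in $_R\Mod_R$ (Proposition \ref{prop:BimodulesMonoidalCategory}), and using that $M$ is $R$-cofibrant, we conclude that this map is a cofibration in $_R\Mod_R$; applying $-\tens_R C$ then yields a cofibration in $\Mod_C$, as required.

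The main obstacle is the combinatorial identification of the latching object of the simplicial bar construction with the iterated pushout-product cube built from $\eta: R \to C$. This is classical, but requires care because the bar construction's degeneracies insert units at all $n$ possible inner positions, and one must verify that the colimit of the corresponding punctured cube of ``partial unit insertions'' really matches the $n$-th latching object of $\tilde B_\bullet$ in $_R\Mod_R$. Once this identification is in place, everything else is a formal consequence of iterated SM7 in $_R\Mod_R$ together with the cofibrancy hypotheses on $\eta$, $M$, and $C$.
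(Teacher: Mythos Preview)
Your proposal is correct and follows the standard Reedy cofibrancy plus extra-degeneracy argument, which is precisely what the paper's cited reference (Arone--Ching, Lemmas 9.18 and 9.21) carries out; the paper itself only sketches this by pointing to that source. Your version is more explicit, in particular the identification of the latching maps with $M\tens_R(\Box^n\eta)$ and the appeal to iterated SM7 in $_R\Mod_R$ (which indeed requires the standing assumptions that $R$ is cofibrant and that $\eta:R\to C$ is a cofibration of $R$-bimodules, both in force just above the proposition).
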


\begin{proof}
The bar construction has a right $C$-module structure coming from the composition law on $C$, since the degreewise tensoring functor $-\circ_R C: s{_R\Mod_R}\rightarrow s{_R\Mod_R}$ commutes with geometric realization. The proof that the map
$$
B(M,C,C)\rightarrow M
$$
induced from the right $C$-module structure on $M$ provides a cofibrant approximation of $M$ is similar to the case of operads and modules discussed in section 9 of \cite{AroneChing} (see Lemma 9.18 and Proposition 9.21).
\end{proof}

As a particular case assume that all objects in $\C$ are fibrant, and hence that the same holds for $_R\Mod_R$ and $\Mod_C$ equipped with the projective model structure. Then $B(R,C,C)$ becomes a fibrant cofibrant approximation of $R$ in the category of right $C$-modules, which means $K(C)$ is the right $C$-module endomorphism object
$$
K(C) = \hom_C (B(R,C,C), B(R,C,C))
$$

\begin{corollary}
The map $B(R,C,C)\rightarrow R$ obtained from the augmentation $\epsilon : C\rightarrow R$ induces an equivalence of $R$-bimodules
$$
K(C)\simeq \hom_C(B(R,C,C),R)
$$
\end{corollary}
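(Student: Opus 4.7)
The plan is to view this as a standard Quillen-functor argument. Setting $R_{cf} := B(R,C,C)$, the previous proposition gives us that $R_{cf}$ is cofibrant in $\Mod_C$ and that the canonical augmentation map $\epsilon_* : B(R,C,C) \to R$ is a weak equivalence of right $C$-modules. By the blanket assumption that every object of $\C$ is fibrant, and the fact that the projective model structure on $\Mod_C$ is transferred from that on $\C$ through the forgetful functor (so that fibrations and weak equivalences are created there), both $B(R,C,C)$ and $R$ are fibrant in $\Mod_C$. Thus $B(R,C,C)$ qualifies as a fibrant cofibrant approximation and can indeed be used to compute $K(C)$.

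Next, I would invoke the Quillen adjunction
$$
- \tens_R B(R,C,C) : {_R\Mod_R} \rightleftarrows \Mod_C : \hom_C(B(R,C,C), -)
$$
given by Proposition \ref{prop:BimodulesMonoidalBiclosed}. That the left adjoint is a left Quillen functor follows from $B(R,C,C)$ being cofibrant in $\Mod_C$, together with the pushout-product axiom in $_R\Mod_R$ (Proposition \ref{prop:BimodulesMonoidalCategory}) applied to the generating (trivial) cofibrations of $_R\Mod_R$ tensored on the right with $B(R,C,C)$. Consequently, the right adjoint $\hom_C(B(R,C,C), -)$ preserves weak equivalences between fibrant objects by Ken Brown's lemma.

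Applying this last fact to the weak equivalence $\epsilon_* : B(R,C,C) \xrightarrow{\sim} R$ between fibrant right $C$-modules yields a weak equivalence of $R$-bimodules
$$
(\epsilon_*)_* : \hom_C(B(R,C,C), B(R,C,C)) \xrightarrow{\sim} \hom_C(B(R,C,C), R),
$$
which, by the previous identification of the left-hand side with $K(C)$, is exactly the claimed equivalence $K(C) \simeq \hom_C(B(R,C,C), R)$.

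The only mildly subtle point, and the one I would be careful about, is checking that $\hom_C(B(R,C,C), -)$ is genuinely right Quillen from $\Mod_C$ to $_R\Mod_R$ (and not merely to $\C$), so that the resulting equivalence is actually an equivalence of $R$-bimodules and not just of underlying objects in $\C$; this is exactly what Proposition \ref{prop:BimodulesMonoidalBiclosed} guarantees, since $B(R,C,C)$ is an $R$-bimodule (even an $(R,C)$-bimodule) and so the equivariant hom lands in $_R\Mod_R$ with the expected bimodule structure.
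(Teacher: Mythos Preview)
Your argument is correct and is precisely the standard way to justify this corollary; the paper itself records the statement without proof, treating it as an immediate consequence of the preceding proposition (that $B(R,C,C)$ is a cofibrant approximation of $R$ in $\Mod_C$) together with the standing assumption that all objects of $\C$ are fibrant.

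One small technical remark: the adjunction you extract from Proposition~\ref{prop:BimodulesMonoidalBiclosed} is, strictly speaking, between ${_R\Mod_R}$ and ${_R\Mod_C}$ (the category of $(R,C)$-bimodules) rather than $\Mod_C$, since the target object in that proposition must carry a left $S$-module structure. This does not affect your argument, as both $B(R,C,C)$ and $R$ are $(R,C)$-bimodules and the augmentation map respects the left $R$-action. Alternatively one can bypass this by noting that $\Mod_C$ is a $\C$-enriched model category (checked on generating cofibrations via SM7 in $\C$), so $\hom_C(B(R,C,C),-)\colon \Mod_C \to \C$ preserves weak equivalences between fibrant objects; the induced map is then a map of $R$-bimodules for purely structural reasons, and weak equivalences in ${_R\Mod_R}$ are detected in $\C$.
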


\begin{lemma}
There is an isomorphism of $R$-bimodules
$$
B(R,C,C)\tens_C R \cong  B(R,C,R)
$$
\end{lemma}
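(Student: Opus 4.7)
The plan is to establish the isomorphism degreewise at the level of simplicial bar constructions, and then pass to geometric realizations. The starting observation is that the functor $-\tens_C R : \Mod_C \to {_R\Mod_R}$ is a left adjoint (its right adjoint being $\homl(R, -)$ restricted to $R$-bimodules via the left $C$-module structure on $R$ coming from the augmentation). Consequently it preserves colimits, and in particular commutes with the coend defining geometric realization (cf. Remark \ref{remark:GeomReal}), so
$$
B(R,C,C) \tens_C R \;\cong\; \bigl| B_\bullet(R,C,C) \tens_C R \bigr|.
$$

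Next I would compute the right-hand simplicial object levelwise. Using associativity of the relative tensor product, we have
$$
B_l(R,C,C) \tens_C R \;=\; \bigl(R \tens_R C^{\tens_R l} \tens_R C\bigr) \tens_C R \;\cong\; R \tens_R C^{\tens_R l} \tens_R (C \tens_C R) \;\cong\; R \tens_R C^{\tens_R l} \tens_R R \;=\; B_l(R,C,R),
$$
where we used the canonical isomorphism $C \tens_C R \cong R$ (valid because $R$ carries a left $C$-module structure via the augmentation $\epsilon$). This supplies the degreewise identification; what remains is to check compatibility with the simplicial structure maps.

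The verification of simplicial compatibility is the main (but essentially formal) step. The inner face maps $d_i$ for $i < l$, which use either the right $C$-module structure of $R$ (at position $0$) or the multiplication of $C$ (at internal positions), are untouched by applying $-\tens_C R$, so they match the corresponding inner face maps of $B_\bullet(R,C,R)$. The top face map $d_l$ of $B_\bullet(R,C,C)$ is induced by the right $C$-module structure of the rightmost $C$-factor, namely the multiplication map $C \tens_R C \to C$; after tensoring with $R$ over $C$ it becomes the map $C \tens_R R \to R$ induced by the left $C$-module structure on $R$, which is precisely the top face map $d_l$ of $B_\bullet(R,C,R)$. The degeneracies, which are all built from the unit map $R \to C$, transparently correspond. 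Finally, the resulting isomorphism respects the $R$-bimodule structure because the outer $R$-factors (the leftmost $R$ on both sides, and the rightmost factor obtained from the tensor with $R$) are preserved by every step.

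The only genuine concern is ensuring that the realization $|B_\bullet(R,C,C) \tens_C R|$ really equals $(|B_\bullet(R,C,C)|) \tens_C R$ in the chosen model of realization; this is automatic as soon as $-\tens_C R$ preserves the relevant colimits (coends), which it does as a left adjoint, and in the concrete setup of Remark \ref{remark:GeomReal} one can equivalently verify this by commuting $-\tens_C R$ past the external tensor $\Delta^n \smash (-)$ and the coend over $\Delta$. This reduces the statement to the degreewise isomorphism established above.
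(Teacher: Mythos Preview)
Your proof is correct and follows essentially the same strategy as the paper: commute the tensor functor past geometric realization (you invoke directly that $-\tens_C R$ is a left adjoint, while the paper first commutes $-\tens_R M$ past realization and then the coequalizer defining $\tens_C$), and then identify the simplicial objects levelwise via $C\tens_C R\cong R$. One minor slip: the right adjoint to $-\tens_C R$ is $\homr(R,-)$ rather than $\homl(R,-)$ (cf.\ the lemma immediately following this one in the paper), but this does not affect your argument since only the existence of a right adjoint is used.
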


\begin{proof}
First, $B(R,C,C) \tens_R M = \vert B_\bullet (R,C,C) \vert \tens_R M = \vert B_\bullet (R,C,C) \tens_R M \vert$ for all $M$ since $- \tens_R M$ has a right adjoint. So
\begin{align*}
B(R,C,C)\tens_C R &= \coeq(B(R,C,C) \tens_R C \tens_R R \rightrightarrows B(R,C,C)\tens_R R) \\
&\cong \coeq(\vert B_\bullet(R,C,C) \tens_R C \tens_R R \vert \rightrightarrows \vert B_\bullet(R,C,C)\tens_R R\vert) \\
&\cong \vert \coeq(B_\bullet(R,C,C) \tens_R C \tens_R R \rightrightarrows B_\bullet(R,C,C)\tens_R R) \vert \\
&= \vert R\tens_R C^{\tens_R \bullet} \tens_C C \tens_R R \vert\\
&\cong \vert R \tens_R C^{\tens_R \bullet} \tens_R R\vert \\
&= B(R,C,R)
\end{align*}

\end{proof}

\begin{lemma}
Let $A$ be a monoid in a monoidal biclosed category $\C$. Let $M$ be a right $A$- module, $N$ a left $A$-module and $P$ any object of $C$. Then there is a natural isomorphism
$$
\hom_A(M,\hom(N,P)) \cong \hom(M\tens_A N, P)
$$
or in other words, the adjoint pair
$$
-\tens_A N : \Mod_A \rightleftarrows \C : \hom(N,-)
$$
is $\C$-enriched.
\end{lemma}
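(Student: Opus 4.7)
The plan is to realize both sides as an equalizer coming from the coequalizer that defines $M\tens_A N$, and then match the two equalizer diagrams term by term using the adjunction $(-\tens N) \dashv \hom(N,-)$.

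First I would note that $\hom(N,P)$ carries a natural right $A$-module structure induced by the left $A$-action on $N$: the action map $\mu_{\hom(N,P)}\colon \hom(N,P)\tens A \to \hom(N,P)$ is the adjoint of
$$
\hom(N,P)\tens A\tens N \xrightarrow{\id\tens \mu_N} \hom(N,P)\tens N \xrightarrow{\ev_N} P.
$$
Since $M\tens_A N$ is by definition the coequalizer of $\mu_M\tens N$ and $M\tens\mu_N$ from $M\tens A\tens N$ to $M\tens N$, and since $\hom(-,P)$ is a right adjoint (via $\homl$), it turns this coequalizer into an equalizer. So $\hom(M\tens_A N,P)$ is the equalizer of
$$
\hom(M\tens N,P)\rightrightarrows \hom(M\tens A\tens N,P).
$$

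Next I would apply the $(-\tens N)\dashv \hom(N,-)$ adjunction to both objects, getting
$$
\hom(M\tens N,P)\cong \hom(M,\hom(N,P)),\qquad \hom(M\tens A\tens N,P)\cong \hom(M\tens A,\hom(N,P)).
$$
Under this identification the parallel arrows become (1) precomposition with $\mu_M\colon M\tens A\to M$, and (2) the map that is to be identified with postcomposition with $\mu_{\hom(N,P)}$. The first identification is immediate by naturality. For the second, a short diagram chase shows that a morphism $f\colon M\to\hom(N,P)$ is sent by the map coming from $M\tens\mu_N$ to the composite $\mu_{\hom(N,P)}\circ(f\tens A)\colon M\tens A\to\hom(N,P)$, since adjoining back and expanding via $\ev_N$ yields exactly $\hat f\circ(M\tens\mu_N)$.

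The equalizer of these two arrows is, by definition, precisely $\hom_A(M,\hom(N,P))$ where $\hom(N,P)$ is regarded as a right $A$-module. Chasing the naturality of the adjunction isomorphism in $M$, $N$ and $P$ upgrades the resulting bijection to a natural isomorphism, and hence, via Remark \ref{remark:EnrichedAdjunction}, shows that the adjoint pair $-\tens_A N\dashv \hom(N,-)$ is $\C$-enriched. The only delicate step is the identification in the second bullet above; this amounts to checking that the adjunction isomorphism sends $M\tens\mu_N$-precomposition to $\mu_{\hom(N,P)}$-postcomposition, which is a single application of the triangle identities but is the one place where care is needed.
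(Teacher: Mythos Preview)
Your proof is correct and follows essentially the same route as the paper's: both recognize each side as an equalizer and use the tensor--hom adjunction to match the two equalizer diagrams term by term. The only cosmetic difference is direction---the paper starts from the definition of $\hom_A(M,\hom(N,P))$ as an equalizer and transports it to $\hom(M\tens_A N,P)$, while you start from the coequalizer defining $M\tens_A N$ and transport the other way---and you are slightly more explicit about checking that the second parallel arrow matches the right $A$-action on $\hom(N,P)$.
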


\begin{proof}
First, recall the classical observation that there is a natural isomorphism
$$
\hom(I\tens J, K)\cong \hom(I, \hom(J,K))
$$
for $I,J,K$ in $\C$ (see Remark \ref{remark:EnrichedAdjunction}). Now, by definition $\hom_A(M,\hom(N,P))$ is the equalizer of the diagram
$$
\hom(M,\hom(N,P)) \rightrightarrows \hom(M\tens A, \hom(N,P))
$$
which is isomorphic to the following diagram.
$$
\hom(M\tens N, P)) \rightrightarrows \hom(M\tens A \tens N,P)
$$
And since $\hom(-,P)$ sends colimits to limits, its equalizer is $\hom(M\tens_A N, P)$, which completes the proof.
\end{proof}

As a consequence of these lemmas, we get the following description of the Koszul dual construction's underlying $R$-bimodule.

\begin{proposition}\label{prop:DescriptionDual}
There is an equivalence of $R$-bimodules
$$
K(C) \simeq \hom_R(B(R,C,R), R)
$$
\end{proposition}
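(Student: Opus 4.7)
The plan is to combine the three preceding results into a short chain. By the Corollary just above, $K(C) \simeq \hom_C(B(R,C,C), R)$ as $R$-bimodules, and by the first Lemma there is an $R$-bimodule isomorphism $B(R,C,C) \tens_C R \cong B(R,C,R)$. So the proof reduces to producing a natural isomorphism of $R$-bimodules
$$
\hom_C(B(R,C,C),\, R) \;\cong\; \hom_R\bigl(B(R,C,C) \tens_C R,\, R\bigr).
$$

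I would obtain this via an equivariant enhancement of the second Lemma. For any right $C$-module $M$, any $(C,R)$-bimodule $N$, and any right $R$-module $P$, I expect a natural isomorphism
$$
\hom_C\bigl(M,\, \hom_R(N, P)\bigr) \;\cong\; \hom_R\bigl(M \tens_C N,\, P\bigr).
$$
The strategy of proof mirrors the previous lemma: write $M \tens_C N$ as the coequalizer of $M \tens_R C \tens_R N \rightrightarrows M \tens_R N$, apply $\hom_R(-, P)$ to turn this into an equalizer, and compare with the analogous equalizer description of $\hom_C(M, \hom_R(N, P))$, feeding in the non-relative adjunction $\hom(X \tens Y, Z) \cong \hom(X, \hom(Y, Z))$ and tracking the extra right-$R$-equivariance coming from $N$ and $P$.

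Applying this with $M = B(R,C,C)$, with $N = R$ regarded as a $(C,R)$-bimodule via the augmentation $\epsilon : C \to R$ on the left and the multiplication of $R$ on the right, and with $P = R$, we obtain
$$
\hom_C\bigl(B(R,C,C),\, \hom_R(R, R)\bigr) \;\cong\; \hom_R\bigl(B(R,C,R),\, R\bigr).
$$
The identification $\hom_R(R, R) \cong R$ (right-$R$-module endomorphisms of the unit being the unit itself, with its right $C$-action inherited from $\epsilon$) then closes the chain and yields the stated equivalence of $R$-bimodules.

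The main obstacle is the careful verification that the $R$-bimodule structures match throughout, in particular that the right $R$-action produced by regarding $N = R$ as a $(C,R)$-bimodule pairs correctly with the right $R$-action on $P = R$ so as to recover the expected $R$-bimodule structure on $\hom_R(B(R,C,R), R)$. This is really just bookkeeping, but it is the only non-formal part of the argument; everything else is a direct application of the Corollary, the first Lemma, and the equivariant version of the second Lemma.
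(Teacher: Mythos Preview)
Your proposal is correct and follows essentially the same route as the paper: Corollary, first Lemma, then the second Lemma combined with $\hom_R(R,R)\cong R$. The ``equivariant enhancement'' of the second Lemma you ask for is not really extra work: the second Lemma is stated for an arbitrary monoidal biclosed category, so applying it inside $({_R\Mod_R},\tens_R,R)$ with $A=C$ gives exactly the isomorphism $\hom_C(M,\hom_R(N,P))\cong \hom_R(M\tens_C N,P)$ you need.
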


\begin{proof}
By the first lemma, showing that $K(C) \simeq \hom_R(B(R,C,C) \tens_C R, R)$ is enough. From the definition, we have $K(C) \simeq \hom_C(B(R,C,C), R)$, and we can then use the fact that $\hom_R(R,R) \cong R$ coupled with the second lemma to conclude.
\end{proof}

Of course we would ideally want a description of the Koszul dual that unlike this one, says something about its multiplicative structure. In section 3, we will investigate how to improve this result in one of the examples we are most interested in and get actual equivalences of {\it algebras}. For now, we have to stop here and instead present the special setting we want to study, where the base monoidal biclosed model category $\C$ is one whose monoids are enriched categories with a fixed object set.

\section{Categories as algebras in a monoidal category}

Let us fix a cofibrantly generated pointed symmetric monoidal model category $(\V, \Smash, 1)$, whose hom object bifunctor (recall that $\homl=\homr$ in the symmetric case) will be written $\map(-,-)$.  For example, $\V$ can be the category $\CG_*$ of pointed compactly generated weak Hausdorff spaces, or a suitable category of spectra like the category of $\bbS$-modules. In what follows we explain how one can see $\V$-categories with a fixed object set $S$ as algebras in some monoidal biclosed category $\Precat$ of ``$\V$-graphs", an idea that has been used a number of times in the literature. This section has significant overlap with \cite{SchwedeEquivalences}, section 6, although they do not mention the existence of hom objects. We are then able to use the framework discussed in section 1 to define the Koszul dual of an augmented $R$-algebra $C$, where $R$ is a category with object set $S$.

\subsection{Definitions}

Let us start by fixing a set $S$, which will be the vertices/object set for all things considered in these notes. In the examples we discuss, we have $S=\N$, since we want to consider an environment in which props of operads naturally live, but there is no need to restrict ourselves to this case for the general definitions. One might for example be interested in setting $S=<\mathscr C>$, that is, the free commutative monoid generated by a given set of colors $\mathscr C$, in order to include props of $\mathscr C$-colored operads.

\begin{definition}
A \textit{precategory} (with object set $S$) is the data, for all couples $(s,s')\in S$, of an object $E(s,s')$ in $\V$. In other words, a precategory is simply a $\V$-enriched directed graph. Note that we allow self-arrows and parallel arrows. The category of precategories is the functor category $\V^{S\times S}$, which will be denoted by $\Precat$.
\end{definition}

The idea behind the name ``precategory" is that a precategory is a category that is missing a multiplicative and unital structure. This idea can be made formal in the following way (\cite{MacLaneCategories}, II.7). Categories with object set $S$ are exactly precategories equipped with a monoid structure for the composition product defined by
$$
(C\circ D)(i,j) = \bigvee_{k\in S} C(i,k) \Smash D(k,j)
$$
for all $i,j$ in $S$. This gives us a (non-symmetric) monoidal category structure on $\Precat$, with the unit object $I$ defined by $I(s,s)=1$ for all $s$ and $I(s,s')= *$ for all $s\neq s'$. We will usually not bother adding a ``$\V$-" prefix in the rest of this paper, but the reader should keep in mind that all the precategories and categories we consider are $\V$-enriched. Similarly we will not write anything to specify the object set $S$.

This category has the following right and left hom-objects $\homr$ and $\homl$, making $\Precat$ biclosed.
\begin{align*}
    \homr(C,D)(i,j) = \prod_{k\in S} \map(C(j,k), D(i,k)) \\
\homl(C,D)(i,j) = \prod_{k\in S} \map(C(k,i), D(k,j))
\end{align*}

The notation is slightly unfortunate, but from now on, we will write $\Cat$ for the category of categories with object set $S$, or $\Cat_S$ when there is a risk for confusion. For $R$ in $\Cat$, the monoid structure relative to $\circ$ allows us to consider modules and algebras over $R$, and modules over an $R$-algebra $C$ just like we did in the more general context of section 1.

\begin{remark}\label{Remark:InvolutionPrecatOp}
Even though the category of precategories is not symmetric, it is equipped with an involution $(-)^{op}: \Precat \rightarrow \Precat$ defined by $M^{op}(i,j)= M(j,i)$ for all $i,j\in S$. This involution is anticommutative, that is, $(M\circ N)^{op} = N^{op} \circ M^{op}$. As a consequence, if $R$ is a category with object set $S$, the maps $I=I^{op} \rightarrow R^{op}$ and $R^{op}\circ R^{op}= (R\circ R)^{op}\rightarrow R^{op}$ give $R^{op}$ a structure of category with object set $S$ as well. Moreover, a left $R$-module structure on $M$ is the same as a right $R^{op}$-module structure on $M^{op}$. Another interesting property is that $\hom^r(A^{op}, B^{op})^{op}= \hom^l(A,B)$.
\end{remark}

We will write $_R\Mod_R$ for the category of $R$-bimodules. As explained in section 1, the composition product $\circ$ of $\Precat$ induces on $_R\Mod_R$ a composition product over $R$, denoted $\circ_R$ where $(M\circ_R N)(i,j)$ is the coequalizer of the diagram
$$
\xymatrix{
\bigvee_{k,l\in S} M(i,k) \Smash R(k,l) \Smash N(l,j) \ar@<-2pt>[r] \ar@<2pt>[r] & \bigvee_{s\in S} M(i,s) \Smash N(s,j)
}
$$
where one of the arrows comes from the right $R$-module structure on $M$ and the other one from the left $R$-module structure on $N$. Recall that in addition $-\circ_R M$ and $M\circ_R -$ have right adjoints constructed in section 1 as the $R$-equivariant hom object functors $\homr_R(M,-)$ and $\homl_R(M,-)$, making $(_R\Mod_R, \circ_R, R)$ into a monoidal biclosed model category.

\begin{remark}\label{remark:OperadsNotClosed1}
The fact that the category of $R$-bimodules is biclosed is crucial, and means it is in that respect better behaved than the category of symmetric sequences that appears in the theory of operads. Informally, the problem with defining homomorphism objects of symmetric sequences lies in the fact one should see them as $X(-,1)$ for some $\Sigma$-bimodule $X$, with $\Sigma$ denoting the symmetric groupoid. We will recall later the definition of prop of a symmetric sequence that provides such a choice of $X$ and prove that it is a monoidal functor from symmetric sequences to $\Sigma$-bimodules.
\end{remark}

\subsection{The free object functors}

Let us fix $R$, a category with object set $S$. The categories defined above are linked by forgetful functors, which we will always denote "U" regardless of the source and target categories. Those forgetful functors all have left adjoint functors, the free left module, free right module, free category, free algebra functors discussed below. Everything is summarized in the diagram below.

$$
\xymatrix{
\Precat \ar@<2pt>[r]^-{R\circ -} \ar@<2pt>[d]^-{\F} & _R\Mod \ar@<2pt>[l]^-{U} \ar@<2pt>[r]^-{-\circ R} &_R\Mod_R \ar@<2pt>[l]^-{U} \ar@<2pt>[d]^-{\FR} \\
\Cat \ar@<2pt>[u]^-{U} & & \Alg_R \ar@<2pt>[u]^-{U}
}
$$

For $E$ a precategory, the left module structure on $R\circ E$ is simply induced by the monoid structure on $R$, and any left $R$-module map $R\circ E \rightarrow M$ is uniquely determined by its restriction to $I\circ E =E$, meaning that we indeed have an adjoint pair $R\circ - : \Precat \leftrightarrows {_R\Mod} : U$. The case of $-\circ R: {_R\Mod} \leftrightarrows _R\Mod_R: U$ is similar. One just needs to notice that if $M$ is a left $R$-module, then $M\circ R$, in addition to having a right $R$-module structure induced by the monoid structure on $R$, inherits a left $R$-module structure from $M$. The fact that those two structures are compatible follows from associativity of $\circ$. Of course similar observations can be made for modules over an $R$-algebra $C$ using the equivariant composition product $\circ_R$.

The other two pairs in the diagram above are instances of a free monoid functor adjunction, the left-hand one for $(\Precat, \circ, I)$ and the right-hand one for $(\Bimod_R, \circ_R, R)$. Notice that we only need to discuss the latter, because the former corresponds to the special case $R=I$. We are going to use the following result.

\begin{proposition}\label{prop:freemonoid}
Let $(\M, \tens, 1)$ be a monoidal category. Suppose that $\M$ has countable coproducts and that $\tens$ distributes over countable coproducts. Then the functor
$$
\F(M) = \bigoplus_{n\ge 0} M^{\tens n}
$$
is left adjoint to the forgetful functor from the category of monoids in $\M$ to $\M$.
\end{proposition}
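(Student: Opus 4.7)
The plan is to exhibit explicitly a monoid structure on $\F(M)$, then verify the universal property that characterizes the adjunction. The key technical ingredient, used throughout, is that $\tens$ distributes over countable coproducts, so that for any $X$ we have canonical isomorphisms $X \tens \F(M) \cong \bigoplus_{n\ge 0} X\tens M^{\tens n}$ and similarly on the other side, and hence
$$
\F(M)\tens \F(M) \cong \bigoplus_{p,q\ge 0} M^{\tens p}\tens M^{\tens q}.
$$

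First, I would define the multiplication $\mu_{\F(M)}: \F(M)\tens \F(M) \rightarrow \F(M)$. On the $(p,q)$ summand, use the associativity isomorphism $M^{\tens p}\tens M^{\tens q} \cong M^{\tens(p+q)}$ composed with the canonical inclusion into $\F(M)$. The unit $\eta: 1\rightarrow \F(M)$ is the inclusion of the $n=0$ summand $M^{\tens 0}=1$. Associativity of $\mu_{\F(M)}$ follows from the coherence of the associator of $\tens$ (both composites restricted to the $(p,q,r)$ summand agree with the canonical iso into $M^{\tens(p+q+r)}$), and unitality follows from the unit isomorphisms of $\tens$. Hence $\F(M)$ is a monoid.

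Next, I would establish the universal property. Given a monoid $(A,\mu_A,\eta_A)$ in $\M$ and a map $f: M \rightarrow A$, define the iterated multiplication maps $\mu_A^{(n)}: A^{\tens n}\rightarrow A$ inductively by $\mu_A^{(0)}=\eta_A$, $\mu_A^{(1)}=\id_A$, and $\mu_A^{(n)}=\mu_A\circ(\mu_A^{(n-1)}\tens \id_A)$ for $n\ge 2$ (these are well-defined up to associativity). Then set $\tilde f|_{M^{\tens n}} = \mu_A^{(n)}\circ f^{\tens n}$ and assemble these into a single map $\tilde f: \F(M)\rightarrow A$ using the universal property of the coproduct. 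I would then check that $\tilde f$ is a monoid map: compatibility with units is immediate from the $n=0$ summand, while compatibility with multiplication reduces, on each $(p,q)$ summand, to the identity $\mu_A^{(p+q)} = \mu_A\circ(\mu_A^{(p)}\tens \mu_A^{(q)})$, which is a standard consequence of associativity of $\mu_A$.

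For uniqueness, I would observe that any monoid map $g: \F(M)\rightarrow A$ extending $f$ (in the sense that the restriction to the $n=1$ summand equals $f$) must send the $M^{\tens n}$ summand to $\mu_A^{(n)}\circ f^{\tens n}$: indeed this is forced on $n=0$ by unit preservation, on $n=1$ by the hypothesis, and on higher $n$ inductively by multiplicativity applied to the composite $M^{\tens n}\hookrightarrow \F(M)\tens\F(M)\xrightarrow{\mu_{\F(M)}} \F(M)$ coming from the $(n-1,1)$ summand. Finally, naturality in $M$ of both $\tilde f\mapsto \tilde f|_{M}$ and $f\mapsto \tilde f$ is straightforward from the naturality of $\tens$ and of coproducts.

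There is no serious obstacle here; the only point requiring mild care is the bookkeeping with coherence of the associator when showing that $\mu_{\F(M)}$ is associative and that $\mu_A^{(p+q)} = \mu_A\circ(\mu_A^{(p)}\tens \mu_A^{(q)})$, both of which reduce to standard applications of Mac Lane's coherence theorem and the associativity of $A$.
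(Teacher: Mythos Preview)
Your proof is correct and follows exactly the standard folklore argument the paper alludes to: the paper's own proof consists of a single sentence noting that distributivity of $\tens$ over countable coproducts yields the bijection $\mor_{\Alg}(\F(M), X)\cong \mor(M,UX)$, and you have simply written out the details of that verification.
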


\begin{proof}
This is folklore. The proof relies on the assumption that $\tens$ distributes over countable coproducts to show that $\mor_\Alg(\F(M), X)\cong \mor(M,UX)$. 
\end{proof}

\begin{remark}
Instead of $\V$-categories, we could have called $\V$-quivers the objects of $\V^{S\times S}$. In the case where $\V$ is the category of vector spaces over a field $\k$, We can work with $\oplus Q= \oplus_{i,j\in S} Q(i,j)$ instead of a precategory/quiver $Q$, and $\oplus \F(Q)$ is then the classical path algebra associated to $Q$. Note also that the category of representations of $Q$ is the functor category $_\k\Mod^{\F(Q)}$.
\end{remark}

Now is a good time to talk about colimits. We already mentioned that $\Precat = \V^{S\times S}$, so $\Precat$ has all (small) limits and colimits, defined pointwise from $\V$. Since $\Precat$ is monoidal biclosed, $\circ$ distributes over colimits which means $U:_R\Mod \rightarrow \Precat$ creates all (small) colimits, something that is true even in the general context of section 1. In this case, we get that all colimits exist in $_R\Mod$, $_R\Mod_R$, and are pointwise. The other requirement of Proposition \ref{prop:freemonoid} is that $\circ_R$ distributes over countable coproducts, which holds because $_R\Mod_R$ is monoidal biclosed by Proposition \ref{prop:BimodulesMonoidalBiclosed}. Therefore we can apply the proposition and get a free $R$-algebra functor $F_R$. And of course, given $C$ an $R$-algebra, we also have free module adjunctions between $_R\Mod_R$, $_C\Mod$ and $\Mod_C$.

\subsection{Model structures}

We now apply the results from \cite{hovey2} and \cite{batanin-berger} mentioned in section 1.2 to get model structures on $\Cat$, $\Mod_R$ and the various categories of modules considered.

Given $i,j\in S$, consider the functor $(-)_{i,j}: \V \rightarrow \Precat$ sending an object of $\V$ to the precategory consisting in $\V$ concentrated in level $(i,j)$, explicitly $V_{i,j}(k,l)$ is $V$ for $(k,l)=(i,j)$ and $*$ otherwise. Note that this functor has a right adjoint in the projection sending a precategory $X$ to $X(i,j)$. Since $\Precat$ is defined as $\V ^{S\times S}$, it inherits a pointwise model structure from $\V$, which by Proposition 1.11.10 in \cite{hirschhorn} is cofibrantly generated with $I_\Precat$ defined as the union of image sets $\cup_{(i,j)\in S\times S}(I_\V)_{i,j}$, and similarly for $J_\Precat$.

\begin{proposition}
If $\V$ satisfies the monoid axiom, so does $\Precat$, and if $\V$ is compactly generated, so is $\Precat$.
\end{proposition}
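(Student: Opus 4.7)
The plan is to exploit the fact that the model structure on $\Precat = \V^{S\times S}$ is pointwise: cofibrations, fibrations, weak equivalences, limits, colimits and filtered colimits are all detected/computed level by level, and the generating (trivial) cofibrations are the level-concentrated maps $(I_\V)_{i,j}$ (resp.\ $(J_\V)_{i,j}$). With this in hand, each of the two assertions reduces by a direct computation to the corresponding assertion in $\V$.

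For the monoid axiom, I would first make the key computation. Given $f : A \to B$ in $J_\V$ and any precategory $X$, the map $X \circ f_{i,j}$ has as value at level $(k,l)$ the map $X(k,i) \Smash f$ when $l = j$, and the identity of $*$ otherwise, because the only nonzero summand of $\bigvee_m X(k,m)\Smash (A_{i,j})(m,l)$ is the one with $m=i$ and $l=j$. An entirely symmetric statement holds for $f_{i,j} \circ X$. Since pushouts and transfinite compositions in $\Precat$ are pointwise, a transfinite composition of pushouts of such maps yields, at each level $(k,l)$, a transfinite composition of pushouts of maps of the form $Y \Smash f$ with $f \in J_\V$. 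By the monoid axiom in $\V$, each such level is a weak equivalence, hence so is the whole map.

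For compact generation I would check the three items of Definition \ref{def:CompactlyGenerated} in turn. Cofibrant generation has already been established. For the perfectness of weak equivalences, the key observation is that a map $g : A \to B$ in $\Precat$ is a cofibration iff it is one pointwise, so $(X \circ g)(k,l) = \bigvee_m X(k,m)\Smash g(m,l)$ is a wedge of maps of the form $Y \Smash (\textrm{cofibration})$ in $\V$, and thus lies in $I_\V^{\Smash}$ since the monoidal saturation is closed under coproducts, pushouts, transfinite compositions and retracts; the same holds for $g \circ X$. Taking closure under these saturation operations preserves the property pointwise, so every morphism in $I_\Precat^{\circ}$ is pointwise in $I_\V^{\Smash}$. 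Filtered colimits being pointwise, a filtered colimit of weak equivalences in $\Precat$ along such morphisms is pointwise a filtered colimit of weak equivalences in $\V$ along morphisms in $I_\V^{\Smash}$, hence a weak equivalence by compact generation of $\V$.

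For smallness, given $X \in \Precat$, the natural isomorphism $\mor_\Precat(X, Y) \cong \prod_{(i,j) \in S\times S} \mor_\V(X(i,j), Y(i,j))$ and the fact that filtered colimits in $\Precat$ are pointwise reduce everything to pointwise smallness in $\V$, provided the filtration is sufficient to accommodate the set $S\times S$. Concretely, if each $X(i,j)$ is $\kappa_{i,j}$-small relative to $I_\V^{\Smash}$, then choosing a regular cardinal $\kappa$ exceeding every $\kappa_{i,j}$ and $|S\times S|$ makes $X$ $\kappa$-small relative to $I_\Precat^{\circ}$, since compatible factorizations at each level through a $\kappa$-filtered system can be rectified to a common stage. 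The main obstacle I anticipate is being careful with the smallness step in the case when $S$ is infinite; the rest is essentially a levelwise bookkeeping argument.
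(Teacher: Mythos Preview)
Your proof is correct and follows essentially the same levelwise strategy as the paper: compute $X\circ f_{i,j}$ and $f_{i,j}\circ X$ pointwise, reduce the monoid axiom to that of $\V$, and then check the three conditions of compact generation levelwise. The paper's argument is considerably terser; in particular it simply asserts that $I_\Precat^\circ$ is $\prod_{S\times S} I_\V^\Smash$ and that smallness ``boils down to'' levelwise smallness, without addressing the cardinal issue you raise when $S$ is infinite. Your treatment of that point (choosing a regular $\kappa$ dominating both the individual $\kappa_{i,j}$ and $|S\times S|$) is a genuine improvement in rigor over the paper's sketch, though the underlying idea is the same.
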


\begin{proof}
It is relatively obvious that the category of precategories satisfies the monoid axiom, given that transfinite compositions and pushouts are defined levelwise in $\Precat$, which means it is enough to observe that for $X$ any precategory, $J_\Precat \circ X$ is composed of maps whose levels are either $*\rightarrow *$ or $\psi \Smash X(k,j)$ for some $\psi$ in $J_\V$. These are weak equivalences in $\V$ by assumption.

Assume $\V$ is compactly generated (see Definition \ref{def:CompactlyGenerated}). We already know that $\Precat$ is cofibrantly generated, and since colimits and morphisms are defined levelwise, the condition of smallness relative $I_\Precat$ boils down to smallness relative $I_\V$. Finally, weak equivalences and colimits are determined pointwise and $I_\Precat^\circ$ is actually just $\Pi_{S\times S}I_\V^\Smash$, so the class of weak equivalences of $\Precat$ is $I_\Precat^\circ$-perfect.
\end{proof}

This result coupled with Propositions \ref{prop:HoveyTransfer} and \ref{prop:BataninBergerTransfer} yield transfered projective model category structures on $_R\Mod$, $\Mod_R$, $_R\Mod_R$ and $\Cat$. Most of the symmetric monoidal model categories one would want to consider for the choice of $\V$ satisfy the monoid axiom. 

\begin{remark}
We will mostly work with the assumption that the ``ground category" $R$ is cofibrant (this terminology being motivated by the analogy with the ground field in classical Koszul duality of algebras). This is reasonable, in part because of the applications we have in mind, in part because it yields desirable properties for the categories of $R$-modules and bimodules. For example, Theorem 2.4 in \cite{hovey2} which says that under mild conditions, a weak equivalence of cofibrant monoids induces a Quillen equivalence between the corresponding categories of modules over the source and the target, which is not necessarily true if the monoids are not cofibrant (see Example 2.5 of the same source).
\end{remark}

\begin{corollary}
If $\V$ satisfies the monoid axiom and $R$ is cofibrant, so does $_R\Mod_R$.
\end{corollary}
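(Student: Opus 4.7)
The plan is to reduce the monoid axiom in $(_R\Mod_R, \circ_R, R)$ to the monoid axiom that was already established for $\Precat$. By Lemma 3.5 of \cite{schwede-shipley} it suffices to check the axiom for generating trivial cofibrations of $_R\Mod_R$, and since the model structure is transferred along the free $R$-bimodule functor $R \circ - \circ R$, these take the form $R \circ f \circ R$ with $f$ a generating trivial cofibration of $\Precat$. So for every $R$-bimodule $X$, I need to show that transfinite compositions of pushouts of $X \circ_R (R \circ f \circ R)$ and of $(R \circ f \circ R) \circ_R X$ are weak equivalences.

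The identifications $X \circ_R R \cong X$ and $R \circ_R X \cong X$ yield
\begin{align*}
X \circ_R (R \circ f \circ R) &\cong X \circ f \circ R, \\
(R \circ f \circ R) \circ_R X &\cong R \circ f \circ X,
\end{align*}
as maps of precategories. Moreover, the forgetful functor $U \colon {_R\Mod_R} \to \Precat$ preserves colimits (because $\circ$ distributes over colimits, so an $R$-bimodule structure extends canonically to any colimit of bimodules taken in $\Precat$) and detects weak equivalences (they are transferred). Hence the transfinite compositions of pushouts of interest may be computed and analysed entirely in $\Precat$.

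Since $R$ is cofibrant, SM7 applied to the cofibration $\emptyset \to R$ and the trivial cofibration $f$ shows that $f \circ R$ and $R \circ f$ are themselves trivial cofibrations in $\Precat$. The monoid axiom in $\Precat$ — which holds for arbitrary trivial cofibrations and not merely the generating ones, again by Lemma 3.5 of \cite{schwede-shipley} — then ensures that transfinite compositions of pushouts of $X \circ (f \circ R)$ and of $(R \circ f) \circ X$ are weak equivalences in $\Precat$, which under the translation above is precisely what is needed. The step requiring the most care is confirming that colimits in $_R\Mod_R$ are created by the forgetful functor to $\Precat$; once that point is in place, the rest of the argument is a clean combination of the unit axioms for $\circ_R$, SM7 for $\Precat$, and the monoid axiom in $\Precat$.
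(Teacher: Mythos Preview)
Your proof is correct and follows essentially the same route as the paper: reduce to generating trivial cofibrations $R\circ f\circ R$, simplify $R\circ f\circ R\circ_R X\cong R\circ f\circ X$, observe that $R\circ f$ is a trivial cofibration in $\Precat$ because $R$ is cofibrant, and then invoke the monoid axiom in $\Precat$. Your write-up is a bit more thorough than the paper's, since you explicitly treat both the left and right cases $X\circ_R(R\circ f\circ R)$ and $(R\circ f\circ R)\circ_R X$ and spell out that the forgetful functor creates colimits and detects weak equivalences, points the paper leaves implicit.
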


\begin{proof}
This is a consequence of something that holds in the general context of section 1: if $\C$ is a monoidal model category satisfying the monoid axiom, and $R$ is a cofibrant monoid in it, then $_R\Mod_R$ also satisfies it.

Let's simply write $J_R$ for the generating trivial cofibration set of $_R\Mod_R$, and let $X$ be an $R$-bimodule. A generating trivial cofibration is of the form $R\circ f \circ R$ where $f:A\rightarrow B$ is a generating trivial cofibration of $\Precat$. An element of the set $J_R\circ_R X$ is therefore of the form $R\circ f \circ R \circ_R X = R\circ f \circ X$, and because $R$ is cofibrant the functor $R\circ -$ is left Quillen, which means that $R\circ f$ is a trivial cofibration (using the pushout version of the monoidal model category axiom with $*\rightarrow R$ and $A\rightarrow B$). Which means that $_R\Mod_R$ satisfies the monoid axiom when $\Precat$ does.
\end{proof}

We can therefore use the general definition of the Koszul dual given in section 1.3 in this context of precategories and modules: for $C$ an augmented $R$-algebra,
$$
K(C)=\hom^r_C(B(R,C,C),B(R,C,C))
$$
Note that the simplicial bar construction $B_\bullet(R,C,C)$ consists of chains of composable arrows from $C$.

\begin{remark}
It is useful to clarify the situation of the geometric realization. In section 1.3 (Remark \ref{remark:GeomReal} and above), we defined the geometric realization of a Reedy cofibrant simplicial $R$-bimodule $X_\bullet$ as the coend of a cofibrant approximation of $X_\bullet$ when seen as a constant cosimplicial simplicial object $cc_*X$. However, in the cases where $\V$ is a suitable category of spaces or spectra like $\CG$ or the category of $\bbS$-modules, there is an external tensor product enabling us to write $$
\vert X \vert = \int^{n\in \Delta} \Delta^n \smash X_n
$$
(as mentioned in Remark \ref{remark:GeomReal}). To be precise, this holds because $\Delta^\bullet \smash X_\bullet$ is a cofibrant approximation of $cc_*X$ in $\mathrm{cs}{_R\Mod_R}$.
\end{remark}

\subsection{A few examples}

The main purpose of our approach is to get an extended definition of Koszul duality, thus we are taking a special interest in the case where $R$ is the category $\Sigma$ where $\Sigma(i,j)$ is the set of bijections from $\{1,\ldots, i\}$ to $\{1,\ldots, j\}$ (with the appropriate modifications to make it an element of $\V$). In section 3, we will justify that our definition of Koszul dual extends the one for Koszul duality of operads in spectra due to Michael Ching, by taking $R=\Sigma$ and $C$ the prop associated to an operad $P$. But there are of course other notable examples.

Let us say the indexing set $S$ has one element. In that case, precategories are just objects of $\V$ and the definition of Koszul dual we study in this article corresponds to a notion of duality in $\V$, for example as in \cite{DGI}. Another natural example to consider is when both $R$ and $C$ only have endomorphisms, which implies that $B(R,C,C)(i,j)$ is trivial for $i\neq j$ so $B(R,C,C)$ only has endomorphisms as well, and the same is true for the Koszul dual, with
$K(C)(i,i) = \map_{C(i,i)}(B(R,C,C)(i,i), B(R,C,C)(i,i))=K(C(i,i))$.


Let us now consider the case $R=\Sigma$. Let $\V$ be a suitable category of spaces, like the category $\CG_+$ of compactly generated weak Hausdorff pointed spaces. To simplify the notation, whenever $m$ is a natural number, we will simply denote by $m$ the basic set with $m$ elements $\{0,1,\ldots, m-1\}$ (instead of using the traditional bold \textbf{m} or underlined $\underline{m}$). As mentioned above, the case where $C$ is the prop of an operad is discussed in section 3, so let us consider the case $C=FI$, the skeleton of the category of finite sets and injections. That is, $FI(m,n)= \inj (m,n)_+$ if $m\le n$ and $FI(m,n) = *$ otherwise. Note that $FI$ is a $\Sigma$-algebra. Although we cannot compute the Koszul dual of $FI$, there is a nice simple description of its bar construction levelwise.

\begin{proposition}
The bar construction $B(\Sigma, FI, \Sigma)$, which we will denote $B(FI)$ for simplicity, is isomorphic in level $(m,n)$ to
$$
\inj(m,n)_+ \smash S^{n-m}
$$
\end{proposition}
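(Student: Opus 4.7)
My plan is to decompose the bar construction as a wedge indexed by injections $m \hookrightarrow n$, identify each summand as a quotient of the nerve of a Boolean interval, and then invoke the classical computation of order complexes of Boolean lattices.

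First I would unpack the simplicial bar construction. Since $\Sigma$ is the unit for $\circs$ on $\Sigma$-bimodules, the simplicial level $B_l(\Sigma, FI, \Sigma)$ coincides with $FI^{\circs l}$. In bidegree $(m,n)$, this is a wedge over tuples $(k_1, \ldots, k_{l-1})$ of $\inj(m, k_1)_+ \Smash_{\Sigma_{k_1}} \cdots \Smash_{\Sigma_{k_{l-1}}} \inj(k_{l-1}, n)_+$. The coequalizer identifications amount to saying that an orbit is uniquely encoded by the composite injection $\phi: m \hookrightarrow n$ together with the chain of intermediate images $V_i \subseteq n$ (with $|V_i| = k_i$) of the tail composites. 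This gives a natural decomposition $B_\bullet(\Sigma, FI, \Sigma)(m,n) \cong \bigvee_{\phi : m \hookrightarrow n} X_\phi$ of pointed simplicial sets, where an $l$-simplex of $X_\phi$ is a flag $A = V_0 \subseteq V_1 \subseteq \cdots \subseteq V_{l-1} \subseteq V_l = n$ with $A := \mathrm{Im}(\phi)$.

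Next I would pin down the face maps of $X_\phi$. The inner faces $d_i$ for $0 < i < l$ simply forget $V_i$. The outer faces $d_0$ and $d_l$ are governed by the augmentation $\epsilon : FI \to \Sigma$ applied to the first (respectively last) arrow of the composable chain, which factors through the basepoint unless that arrow is a bijection; this translates to: $d_0$ collapses the flag to the basepoint unless $V_1 = A$, in which case $V_1$ is removed, and symmetrically $d_l$ collapses unless $V_{l-1} = n$. Consequently, $X_\phi$ is isomorphic to the quotient $N([A, n]) / W$, where $N([A, n])$ is the nerve of the Boolean interval $[A, n] \cong 2^{n-m}$ and $W := N([A, n)) \cup N((A, n])$ is the sub-simplicial set of chains avoiding one of the two endpoints.

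To compute the homotopy type of $|X_\phi|$, I would use the cofiber sequence $|W| \to |N([A, n])| \to |X_\phi|$. The total space is contractible because $[A, n]$ has a global minimum, so $|X_\phi| \simeq \Sigma |W|$. The pieces $N([A, n))$ and $N((A, n])$ are themselves contractible (for the same reason) and intersect in $N((A, n))$, so by Mayer--Vietoris $|W| \simeq \Sigma |N((A, n))|$. Hence $|X_\phi| \simeq \Sigma^2 |N((A, n))|$, and the classical fact that the order complex of the proper part of the Boolean lattice $2^{n-m}$ is homotopy equivalent to $S^{n-m-2}$ (with the empty convention for $n - m \leq 1$, which is handled by direct inspection of these edge cases) yields $|X_\phi| \simeq S^{n-m}$. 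Wedging over $\phi$ gives $\inj(m,n)_+ \Smash S^{n-m}$.

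The main obstacle will be the bookkeeping in the first two steps: verifying that the $\circs$-coequalizer description of $FI^{\circs l}$ really does canonically unwind as flags indexed by the composite injection, and that the outer face maps inherit the partial behavior dictated by the augmentation. Once this combinatorial setup is in place, the topological content is standard, and could alternatively be verified by a direct inductive argument on $n-m$ using the cofibration $W \hookrightarrow N([A,n])$.
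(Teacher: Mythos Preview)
Your argument tracks the paper's proof closely: both unwind $FI^{\circ_\Sigma l}(m,n)$ as pairs consisting of a composite injection $\phi: m \hookrightarrow n$ together with a flag of subsets between $\mathrm{Im}(\phi)$ and $n$, and both identify the resulting summand $X_\phi$ with the quotient of the nerve of the Boolean interval $[A,n]$ by the subcomplex $W$ of chains missing one of the endpoints. The one substantive difference is the final topological step. You compute $|N([A,n])|/|W|$ homotopy-theoretically, via the cofiber sequence, Mayer--Vietoris, and the known homotopy type of the order complex of the proper part of a Boolean lattice, arriving at $|X_\phi| \simeq S^{n-m}$. The paper instead uses the cube model: the nerve of $2^{[n-m]}$ is \emph{homeomorphic} to $[0,1]^{n-m}$ (vertices sent to characteristic vectors, extended affinely on simplices), and under this identification $W$ becomes exactly $\partial [0,1]^{n-m}$, so the quotient is homeomorphic to $S^{n-m}$ on the nose. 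This yields the isomorphism asserted in the statement, whereas your argument as written only produces a weak equivalence. For the purposes of the paper either conclusion would suffice, but if you want to match the statement literally, the cube identification is both shorter and sharper.
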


\begin{proof}
By definition, the $l$-th simplicial degree of the bar construction is
$$
B_l(FI) (m,n)= (\Sigma \circ_\Sigma FI^{\circ_\Sigma^l}\circ_\Sigma \Sigma)(m,n) = FI^{\circ_\Sigma ^l}(m,n)
$$
so its non degenerate part is
$$
\left(\coprod_{m<n_1<\cdots<n_{l-1}<n} \inj(m,n_1) \times_{\Sigma_{n_1}} \cdots \times_{\Sigma_{n_{l-1}}} \inj(n_{l-1},n)\right)_+
$$
A nontrivial element of this space is an equivalence class of composable chains of injections
$$
m \xrightarrow{i_1} n_1 \xrightarrow{i_2} \cdots \xrightarrow{i_{l-1}} n_{l-1} \xrightarrow{i_l} n
$$
under the actions of the relevant symmetric groups. Such an equivalence class can be seen as the data of an injection $i:m\rightarrow n$ together with a chain
$$
i(m)\subset X_1 \subset X_2 \subset \cdots \subset X_{l-1} \subset n
$$
of subsets of $n$.
A homeomorphism between these two spaces can be defined by setting $i=i_l \cdots i_1$ and $X_j$ as the image of $i_l \cdots i_{j+1}$. Looking at how the face and degeneracy maps of the bar construction translate to the latter description, we can deduce that
$$
B(FI)(m,n) = \inj(m,n)_+ \smash \vert \P(n-m) \vert /(\vert \P_\subsetneq (n-m)\vert \cup \vert \P_\supsetneq (n-m)\vert)
$$
where $\P(n-m)$ denotes the poset of subsets of $n-m$, and $\P_\subsetneq (n-m)$ (resp. $P_\supsetneq (n-m)$) denotes its subposet consisting of all chains whose first element is not the empty set (resp. whose last element is not the full set $n-m$). Using the model of $\vert \P(n-m) \vert $ as a cube of dimension $n-m$ we see that the right-hand quotient in the above formula corresponds to the quotient of the $(n-m)$-dimensional cube by its boundary, that is, to $S^{n-m}$.
\end{proof}

\section{Connection to Koszul duality of operads}

We now explore the case of $R=\Sigma$ further, with the idea of relating our definition of the Koszul dual of a category to Koszul duality of operads. In this context, props are examples of $\Sigma$-algebras, and in particular props of operads.

\begin{remark}
This is a good time for a small interlude on what props are and how they fit in the setting of (pre)categories with a fixed object set $S=\langle \mathscr C \rangle$, meaning the set of words on a given set $\mathscr C$ of colors. We can define $\mathscr C$-colored props to be strict symmetric monoidal model categories with object set $S$, and this coincides with, for example, definition 2.2.13 in \cite{Yau08}. In the latter, props are symmetric bimodules equipped with compatible horizontal and vertical composition laws, one corresponding to the composition of morphisms i.e. the category structure, and the other to the monoidal structure. As before, we will focus on the case where $S=\N$, and thus on 1-colored props, but we believe our results could generalize to props with multiple colors.
\end{remark}

\subsection{The Prop functor is monoidal}

In what follows we will assume that the base category $\V$ is a suitable category of spectra, for example the category of EKMM spectra.


\begin{definition}
To every symmetric sequence $X$ we can associate a $\Sigma$-bimodule
$$
\prop X(m,n)= \coprod_{f: m \rightarrow n} X(m_1)\Smash \cdots \Smash X(m_n)
$$
where $m_i$ denotes the number of elements in $f^{-1}(i)$ (see Example 60 in \cite{Markl_operads_props}). If $X$ is an operad, $\prop X$ is a prop (meaning that, admittedly, our notation can be a bit misleading if $X$ is not).
\end{definition}
If $X$ is reduced, meaning $X(0)=*$ and $X(1)= S^0$, then $\prop X(m,n)=*$ for $m<n$ (we say $\prop X$ is {\it directed}) and $\prop X(n,n)=\Sigma_n$.
\begin{remark}
 In the case where $\V$ is, for example, the category of dg-modules, it can be interesting to note that directed precategories are exactly representations in $\V$ of the infinite quiver $0\leftarrow 1 \leftarrow \cdots$.
\end{remark}

Just like in the context of operads, these properties are very useful for a precategory to have, one example among others is that they make the description of composition products of $\prop X$ much nicer. Namely the indexing set of the coproduct in $(\prop X \circ_\Sigma \prop X)(m,n)$ becomes finite. We will therefore assume that $X$ is reduced throughout this section, and in the more general context of section 5 we will also work with a directed $R$-category $C$.

We investigate how the Koszul dual $\Sigma$-algebra of the prop of an operad $P$ relates to the prop of its Koszul dual. The main thing to look out for being that, whereas the Koszul dual construction from section 1 makes use of an endomorphism object and the multiplication is the composition of endomorphisms, we do not have access to hom objects in the context of operads, so the Koszul dual operad of $P$ is defined as the dual of $B(1,P,1)$.

\begin{proposition}\label{prop:PropFunctorMonoidal}
The functor $\prop: \V^{\Sigma} \rightarrow {_\Sigma\Mod_\Sigma}$ is monoidal.
\end{proposition}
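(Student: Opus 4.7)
The plan is to exhibit a natural isomorphism $\mu_{X,Y}: \prop(X) \circ_\Sigma \prop(Y) \xrightarrow{\cong} \prop(X \circ Y)$ together with a unit isomorphism $\eta: \Sigma \xrightarrow{\cong} \prop(I)$, and then check the standard monoidal coherence axioms (unit triangles and associativity pentagon).

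For the unit, I would simply unravel the definition. Since $I(1) = \mathbb{S}$ and $I(k) = *$ for $k \neq 1$, in the coproduct $\prop(I)(m,n) = \coprod_{f: m \to n} I(m_1) \smash \cdots \smash I(m_n)$ the only non-basepoint contributions come from maps $f$ for which every fiber has size one, i.e., bijections. This forces $m = n$ and identifies $\prop(I)(n,n)$ with $\Sigma_n$ (plus basepoint), which matches the unit $\Sigma$-bimodule.

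For the main isomorphism, I would expand both sides as indexed wedges. The right-hand side is
$$
\prop(X \circ Y)(m,n) = \coprod_{h: m \to n} \prod_{i=1}^{n} (X \circ Y)(h^{-1}(i)),
$$
and each factor $(X \circ Y)(h^{-1}(i))$ further decomposes as $\bigvee_{\ell_i} X(\ell_i) \smash_{\Sigma_{\ell_i}}$ (partitions of $h^{-1}(i)$ into $\ell_i$ blocks decorated by $Y$). Meanwhile, the coequalizer describing $(\prop X \circ_\Sigma \prop Y)(m,n)$ can be written as $\bigvee_{s} \prop X(m,s) \smash_{\Sigma_s} \prop Y(s,n)$, which after expansion is a wedge indexed by equivalence classes (under the $\Sigma_s$-action) of pairs $(f: m \to s,\ g: s \to n)$, decorated by $X(m_j)$ for $j \in s$ and $Y(|g^{-1}(i)|)$ for $i \in n$. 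The comparison map comes from the combinatorial bijection
$$
\{m \xrightarrow{f} s \xrightarrow{g} n\}/\Sigma_s \;\longleftrightarrow\; \{h: m \to n\text{ together with, for each }i,\ \text{a map } h^{-1}(i) \to g^{-1}(i)\}/\!\prod_i \Sigma_{\ell_i}
$$
where $h = g \circ f$, $\ell_i = |g^{-1}(i)|$, and the decomposition $\Sigma_s \supset \prod_i \Sigma_{\ell_i}$ corresponds to the summand $g$. Matching the $X$- and $Y$-decorations and observing that the remaining permutation quotients line up gives the natural isomorphism on underlying objects; equivariance for the outer $\Sigma_m$ (acting on $m$) and $\Sigma_n$ (acting on $n$) is immediate from this description.

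Finally, I would verify the coherence axioms. Unitality (that $\mu$ together with $\eta$ satisfies the triangle diagrams) reduces to the observation that composing with a bijection $f$ or $g$ of matching size simply recovers the other factor, so the combinatorial bijection above collapses correctly in the boundary cases $X = I$ or $Y = I$. Associativity (the pentagon) amounts to showing that for three symmetric sequences $X, Y, Z$, the iterated bijection between diagrams $m \to s \to t \to n$ modulo $\Sigma_s \times \Sigma_t$ and triples $(h: m \to n, \ldots)$ is the same whether we associate to the left or to the right; this is a direct check of the universal property of iterated composites in $\Sigma$-bimodules against the nested composition of operads. I expect the main obstacle to be purely notational: keeping track of which $\Sigma$-action is being quotiented at which stage, particularly in the comparison with the coequalizer definition of $\circ_\Sigma$ and in making sure the combinatorial bijection is genuinely natural in $X$ and $Y$ (not just set-indexed).
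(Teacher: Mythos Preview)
Your approach is essentially the same as the paper's: both arguments expand each side as a colimit indexed by factorizations $m \to s \to n$ and identify the two indexings. The paper packages this a bit more cleanly by introducing the groupoid $\C_{i,j}$ of all factorizations $i \to k \to j$ (with morphisms the bijections of the middle set) and the full subgroupoid $\D_{i,j}$ where the second map is order-preserving; it then observes that $(\prop M \circ_\Sigma \prop N)(i,j)$ is the colimit of a single functor $F$ over $\C_{i,j}$ while $\prop(M\circ N)(i,j)$ is the colimit of the same $F$ restricted to $\D_{i,j}$, and concludes because $\D_{i,j}\hookrightarrow\C_{i,j}$ is an equivalence of groupoids. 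This avoids the slight awkwardness in your displayed bijection, where the symbol $g^{-1}(i)$ appears on the right-hand side even though $g$ has been quotiented out; what you mean is an abstract set of size $\ell_i$, and the groupoid language handles that automatically.

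On the other hand, you go further than the paper by also treating the unit isomorphism and the coherence axioms explicitly. The paper's proof establishes only the natural isomorphism $\prop(M\circ_\Sigma N)\cong \prop M \circ_\Sigma \prop N$ and leaves unitality and the associativity pentagon implicit. Your outline of those checks is correct in spirit, and once the isomorphism is expressed via the equivalence $\D_{i,j}\simeq\C_{i,j}$ the pentagon becomes the evident compatibility of iterated factorizations, so the verification is routine.
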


\begin{proof}
On one hand, we have
\begin{align*}
    &(\prop(M)\circ_\Sigma \prop(N))(i,j)\\ &= \bigvee_k \prop(M)(i,k) \Smash_{\Sigma_k} \prop(N)(k,j) \\
    &=\bigvee_k \big(\bigvee_{i\rightarrow k} M(i_1) \Smash \cdots \Smash M(i_k) \big)\Smash_{\Sigma_k}  \big( \bigvee_{k\rightarrow j} N(k_1) \Smash \cdots \Smash N(k_j) \big)
\end{align*}
Let us consider the groupoid $\C_{i,j}$ whose objects are diagrams $i\rightarrow k \rightarrow j$ and where $\C_{i,j}((g,f),(g',f'))$ consists of the bijections $k\rightarrow k$ making the natural diagram
$$
\xymatrix{
i \ar[r]^-{g'} \ar[d]_{g} &k \ar[d]^{f'} \\
k \ar[r]^-{f} \ar[ru]^-{\sigma} &j
}
$$
commute. Notice that all morphisms can be written as $\sigma: (g,f) \rightarrow (\sigma g, f \sigma^{-1})$. Then the formula we obtained above can be seen as the colimit of the functor $F:\C_{i,j} \rightarrow \V$ sending $i\rightarrow k \rightarrow j$ to
$$
M(i_1) \Smash \cdots \Smash M(i_k) \Smash N(k_1) \Smash \cdots \Smash N(k_j)
$$
On the other hand,
$$
\prop(M\circ_\Sigma N)(i,j) = \bigvee_{i\rightarrow j} (M\circ_\Sigma N)(i_1)\Smash \cdots \Smash (M\circ_\Sigma N)(i_j)
$$
with
$$
(M\circ_\Sigma N)(i_l) = \bigvee_{k_l} \bigvee_{i_l\rightarrow k_l} M(i_{l,1}) \Smash \cdots \Smash M(i_{l,k_l}) \Smash_{\Sigma_{k_l}} N(k_l)
$$
Consider now the full subcategory $\D_{i,j}$ consisting of the objects $(g,f): i\rightarrow k \rightarrow j$ of $\C_{i,j}$ where the right-hand side morphism $f$ is order-preserving. Notice that for $(g,f)$ an object of $\D_{i,j}$, a morphism $\sigma: (g,f) \rightarrow (\sigma g, f \sigma^{-1})$ lies in $\D_{i,j}$ if and only if $\sigma$ preserves $f^{-1}(x)$, for all $1\le x\le j$. From this we can deduce that taking the colimit of $F\vert_{\D_{i,j}}$ gives the formula we have above for $\prop (M\circ_\Sigma N)(i,j)$. Since $\D_{i,j}$ and $\C_{i,j}$ are equivalent categories, we get the desired isomorphism $\prop (M\circ_\Sigma N) \cong \prop(M)\circ_\Sigma \prop(N)$.
\end{proof}

\begin{remark}\label{remark:OperadsNotClosed2}
Coming back to Remark \ref{remark:OperadsNotClosed1}, one can identify a symmetric sequence $M$ to $\prop(M)(-,1)$, and using Proposition \ref{prop:PropFunctorMonoidal}, a map $M\circ_\Sigma N\rightarrow P$ to a collection of maps
$$
\prop(M)(i,k)\smash_{\Sigma_k} \prop(N)(k,1) \rightarrow \prop (P)(i,1)
$$
which is the same as a collection of maps $\prop(M)(i,k) \rightarrow \map(N(k), P(i))$. The obstacle to the existence of homomorphism objects for symmetric sequences is then that this does not translate to a map where the source is $M$. It is worth noting, in that regard, that while the functor $\prop$ is monoidal it has no left or right adjoint.
\end{remark}

\subsection{An equivalence of $\Sigma$-bimodules}

We will now use the fact that the functor $\prop$ is monoidal to exhibit a zigzag of weak equivalences of $\Sigma$-bimodules between $K(\prop P)^\op$ and $\prop K(P)$. This is weaker than the result we obtain in section 3.3 (an actual zigzag of weak equivalences of {\it $\Sigma$-algebras}), but it is worth presenting since its construction is much more natural.

\begin{lemma} \label{Corollary:IntermediateStepCompatibility}
There is an equivalence of $\Sigma$-bimodules
$$
K(\prop P) \simeq \hom_\Sigma(\prop(B(1,P,1)), \Sigma)
$$
\end{lemma}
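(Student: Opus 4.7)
The plan is to combine the general description of the Koszul dual's underlying bimodule with the monoidality of the prop functor. By Proposition \ref{prop:DescriptionDual}, applied with $R=\Sigma$ and $C=\prop P$, there is an equivalence of $\Sigma$-bimodules
$$
K(\prop P) \simeq \hom_\Sigma(B(\Sigma, \prop P, \Sigma), \Sigma).
$$
So it is enough to exhibit an equivalence $\prop(B(\sfI, P, \sfI)) \simeq B(\Sigma, \prop P, \Sigma)$ of $\Sigma$-bimodules, where $\sfI$ denotes the unit for the composition product $\circ$ on symmetric sequences (concentrated in arity $1$ with value $1$), and then apply $\hom_\Sigma(-, \Sigma)$.

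First I would observe that $\prop(\sfI) \cong \Sigma$: plugging $\sfI$ into the defining formula for $\prop$ forces every $m_i$ to equal $1$, so only bijections $m\to n$ contribute, recovering $\Sigma(m,n)$ levelwise. Next, using Proposition \ref{prop:PropFunctorMonoidal} repeatedly, for each simplicial degree $k$ one obtains a natural isomorphism
$$
\prop\bigl(\sfI \circ P^{\circ k} \circ \sfI\bigr) \cong \Sigma \circ_\Sigma \prop(P)^{\circ_\Sigma k} \circ_\Sigma \Sigma = B_k(\Sigma, \prop P, \Sigma).
$$
Monoidal coherence of $\prop$ guarantees that these identifications are compatible with the face and degeneracy maps of the two simplicial bar constructions (the operadic multiplication of $P$ is sent by $\prop$ to the $\circ_\Sigma$-multiplication on $\prop P$, and units match because $\prop(\sfI)\cong \Sigma$), so one obtains an isomorphism of simplicial $\Sigma$-bimodules $\prop(B_\bullet(\sfI, P, \sfI)) \cong B_\bullet(\Sigma, \prop P, \Sigma)$.

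The main obstacle is to descend this simplicial isomorphism through geometric realization, i.e.\ to show that $\prop$ commutes with $|{-}|$ at least up to weak equivalence. This is not automatic since, as pointed out in Remark \ref{remark:OperadsNotClosed2}, $\prop$ has no adjoint and a priori does not preserve all colimits. However, inspecting the defining formula, $\prop(X)(m,n)$ is assembled from the values of $X$ using only the finite coproduct indexed by functions $f:m\to n$ (with the reducedness assumption on $P$ eliminating all summands involving $P(0)$, and keeping all smash factors finite) and iterated smash products. Geometric realization of simplicial spectra commutes on the nose with finite coproducts, and commutes with finite smash products up to weak equivalence on Reedy cofibrant simplicial objects in the category of $\S$-modules. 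So the argument reduces to verifying Reedy cofibrancy of $B_\bullet(\sfI, P, \sfI)$ as a simplicial symmetric sequence, which follows from a standard latching-map analysis of the two-sided bar construction together with the cofibrancy hypotheses on $P$. Combining the levelwise isomorphism with this commutation yields the desired weak equivalence $\prop(B(\sfI, P, \sfI)) \simeq B(\Sigma, \prop P, \Sigma)$ and, after applying $\hom_\Sigma(-,\Sigma)$, the statement of the lemma.
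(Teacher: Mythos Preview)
Your proposal is correct and follows essentially the same route as the paper: reduce via Proposition~\ref{prop:DescriptionDual}, identify $\prop(\sfI)\cong\Sigma$, use the monoidality of $\prop$ (Proposition~\ref{prop:PropFunctorMonoidal}) to match the simplicial bar constructions levelwise, and then pass to realizations. The paper handles the realization step in one line by invoking that $\Smash$ distributes over colimits, whereas you spell out more carefully why $\prop$ commutes with geometric realization despite lacking an adjoint; your extra care is reasonable, but the underlying argument is the same.
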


\begin{proof}
Note that $\prop 1 = \Sigma$. By Proposition \ref{prop:DescriptionDual} it is enough to show that
$$
B(\prop 1, \prop P, \prop 1) = \prop B(1,P,1)
$$
which follows from the Proposition \ref{prop:PropFunctorMonoidal} and the fact that the symmetric monoidal product $\Smash$ of the base category $\V$ distributes over colimits.
\end{proof}

\begin{proposition}\label{prop:compatibility}
Assume that $P(n)$ is finite for all $n$, by which we mean it is a finite complex (i.e. dualizable). There is a zigzag of equivalences of $\Sigma$-bimodules
$$
K(\prop P)^{op} \simeq \prop (K(P))
$$
\end{proposition}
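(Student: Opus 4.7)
The plan is to reduce the statement to an explicit pointwise computation enabled by Lemma \ref{Corollary:IntermediateStepCompatibility}. Writing $B := B(1,P,1)$, that lemma gives
$$K(\prop P) \simeq \hom_\Sigma(\prop(B),\, \Sigma).$$
Unwinding the explicit formula for the right equivariant hom object of $\Sigma$-bimodules, and using that $\Sigma(j,k) = *$ whenever $j \neq k$, this becomes, level by level,
$$K(\prop P)(j, i) \simeq \map_{\Sigma_j}\bigl(\prop(B)(i, j),\, \Sigma(j,j)\bigr).$$
Passing to the opposite $\Sigma$-bimodule simply swaps indices, so the goal reduces to identifying the right-hand side with $\prop(K(P))(i, j)$.

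I would then apply two duality facts. First, $(\Sigma_j)_+$ is self-dual as a $\Sigma_j$-spectrum via the norm map, which gives
$$\map_{\Sigma_j}\bigl(\prop(B)(i,j),\, (\Sigma_j)_+ \smash \S\bigr) \;\simeq\; \map\bigl(\prop(B)(i,j),\, \S\bigr) \;=\; \prop(B)(i,j)^\sharp,$$
where the last expression denotes the arity-wise Spanier--Whitehead dual, and the middle equivalence uses the standard tensor-hom adjunction to strip off the induced $\Sigma_j$ from the target. Second, by Ching's description, $K(P)(n) \simeq B(n)^\sharp$ at the level of symmetric sequences, and the finiteness hypothesis on $P$ ensures every $B(n)$ is a dualizable spectrum. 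Monoidality of Spanier--Whitehead duality on finite spectra, combined with the fact that finite wedges and finite products coincide in spectra, then yields
$$\prop(K(P))(i, j) \;=\; \bigvee_{f : i \to j} K(P)(i_1) \smash \cdots \smash K(P)(i_j) \;\simeq\; \Bigl(\, \bigvee_{f : i \to j} B(i_1) \smash \cdots \smash B(i_j) \,\Bigr)^{\!\sharp} \;=\; \prop(B)(i,j)^\sharp.$$

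Composing these pointwise equivalences produces the desired level-by-level identification $K(\prop P)^{op}(i,j) \simeq \prop(K(P))(i,j)$. The main obstacle I anticipate is upgrading these pointwise identifications to an honest equivalence of $\Sigma$-bimodules: the left $\Sigma_i$-action on $K(\prop P)(j,i)$, which under our chain of equivalences corresponds to the $\Sigma_i$-action on $\prop(B)(i,j)$ by precomposition in the first slot, must be matched with the right $\Sigma_i$-action on $\prop(K(P))(i,j)$ (which permutes the input labels of functions $f : i \to j$), and similarly on the $\Sigma_j$ side. A secondary bookkeeping issue is that realizing $K(\prop P)$ concretely as an endomorphism object requires fibrant-cofibrant replacements of $\Sigma$ in right $\prop(P)$-modules, so the equivalence is naturally presented as a zigzag rather than a single arrow.
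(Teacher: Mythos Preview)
Your argument is essentially identical to the paper's: both invoke Lemma \ref{Corollary:IntermediateStepCompatibility}, then compute $\hom_\Sigma(\prop M,\Sigma)$ levelwise by collapsing the product over $k$ using $\Sigma(i,k)=*$ for $i\neq k$, apply self-duality of $\Sigma_j$ to identify the result with a Spanier--Whitehead dual, and finally use finiteness to commute duals past the finite wedge defining $\prop$. Your concern about upgrading the pointwise identifications to a $\Sigma$-bimodule equivalence is honest---the paper does not verify equivariance any more carefully than you do, and indeed the stronger $\Sigma$-\emph{algebra} comparison is deferred to the next subsection (Proposition \ref{prop:StrongCompatibility}).
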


\begin{proof}
Because of Corollary \ref{Corollary:IntermediateStepCompatibility}, it is sufficient to show that for a symmetric sequence $M$ such that $M(n)$ is finite for all $n$, $\hom_\Sigma(\prop (M), \Sigma)(i,j)$ is equivalent to $\prop(M^\vee)(j,i)$. Here by $M^\vee$ we mean the symmetric sequence defined by $M^\vee(n)=M(n)^\vee$, which is how we write the Spanier-Whitehead dual $F(M(n),\bbS)$ of $M(n)$ (see part III in \cite{ESHT} for a treatment of equivariant stable duality). First, note that $\hom_\Sigma(\prop (M), \Sigma)(i,j)$ is equivalent to
\begin{align*}
    \prod_k \map_{\Sigma_k} (\prop(M)(j,k), \Sigma(i,k)) &\simeq \map_{\Sigma_i}(\prop(M)(j,i), \Sigma_i) \\
    &\simeq \prop(M)(j,i)^\vee
\end{align*}
because the symmetric group $\Sigma_i$ is self-dual (let us emphasize that this is in no way an isomorphism). From here, we can conclude using the finiteness assumption:
\begin{align*}
    \prop(M)(j,i)^\vee &\simeq \left( \bigvee_{f:j\rightarrow i}M(j_1) \Smash \cdots \Smash M(j_i) \right)^\vee \\
    &\simeq \bigvee_{f:j\rightarrow i}M(j_1)^\vee \Smash \cdots \Smash M(j_i)^\vee
    &\simeq \prop(M^\vee)(j,i)
\end{align*}
\end{proof}



\subsection{An equivalence of $\Sigma$-algebras}

In this section, we want to get a zigzag of weak equivalences of {\it algebras}
$$
\vp: \prop (BP^\sharp)^{\op} \sim K(\prop P)
$$
for a reduced, levelwise finite operad $P$ in spectra (meaning, $\bbS$-modules), thus strengthening the compatibility result of Proposition \ref{prop:compatibility}.

By definition of $K(\prop P)$ as the derived $\prop P$-endomorphism object of $\Sigma$, a natural way to tackle this problem is to define an appropriate $(\prop (BP^\sharp)^{\op}, \prop P)$-bimodule $M$ that is equivalent to $\Sigma$ (as a right $\prop P$-module). We will first show that $M=\prop (B(P,P,1)^\sharp)^{\op}$ satisfies this.

\begin{definition}
Let $X$ and $Y$ be symmetric sequences. Define $X\bar \circ Y$ to be the symmetric sequence with
$$
(X \bcirc Y)(i)= \bigvee_{k\rightarrow i} X(k_1) \smash \cdots \smash X(k_i) \smash_{\Sigma_k} Y(k)
$$
Note that this is simply the definition of the standard composition product $X\circ Y$, with the indexing set altered to be maps from $k$ to $i$ instead of from $i$ to $k$.
In addition, we define $X\hcirc Y$ by
$$
(X\hcirc Y)(i)= \prod_{i\rightarrow k} X(i_1) \smash \cdots \smash X(i_k) \smash_{\Sigma_k} Y(k)
$$
This definition can be found in remark 2.20 of \cite{Ching05}.
\end{definition}

\begin{remark}\label{remark:CompProdCompatibilitySec3}
It is easy to see that the composition product $\bcirc$ behaves well with respect to the functor $\prop: \V^\Sigma \rightarrow \V^{\Sigma \times \Sigma}={_\Sigma\Mod_\Sigma}$, by which we mean that
$$
\prop(X\bcirc Y) \cong (\prop X)^{\op} \circ (\prop Y)
$$
naturally in $X$ and $Y$. In addition, even though $\bcirc$ is not associative, there is a natural isomorphism
$$
(X\circ Y) \bcirc Z \cong Y\bcirc (X \bcirc Z)
$$
Because of this, if $P$ is an operad i.e. a monoid for $\circ$ in the category of symmetric sequences, there is a natural notion of {\it left $P$-$\bcirc$-module}.
\end{remark}

\begin{definition}
A left $P$-$\bcirc$-module structure on a symmetric sequence $X$ is a map $P\bcirc X \rightarrow X$ such that
$$
\xymatrix{
(P\circ P) \bcirc X \ar[r]^-{\cong} \ar[d] & P\bcirc (P\bcirc X) \ar[r] &P\bcirc X \ar[d] \\
P\bcirc X \ar[rr]&& X
}
$$
Let us emphasize that the isomorphism used on the top row swaps the two copies of $P$.
\end{definition}

\begin{proposition}
Let $X$ be a symmetric sequence and suppose that $X$ is a left $P$-module for some operad $P$. Then its levelwise dual $X^\sharp$ is a left $P$-$\bcirc$-module. On the other hand, if $Q$ is a cooperad and $X$ is a right $Q$-comodule, there is a natural right $Q^\sharp$-module structure on $X^\sharp$.
\end{proposition}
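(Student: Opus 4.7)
My plan is to construct both structure maps formally, via the adjunctions of the closed symmetric monoidal structure on $\V$ and the contravariance of levelwise duality $(-)^\sharp = F(-,\bbS)$. No finiteness hypothesis is required, since the needed ingredients are standard features of the duality functor.

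For the first claim, I fix $g \colon k \to i$ with $k_s = |g^{-1}(s)|$ for $s \in i$. In the paper's convention, in which $P$ sits at the leaves and $X$ at the root of $P \circ X$, the corresponding $\Sigma_k$-equivariant component of $m \colon P \circ X \to X$ has the shape
\[
m_g \colon P(k_1) \smash \cdots \smash P(k_i) \smash X(i) \to X(k).
\]
Applying $F(-,\bbS)$ and using the natural isomorphism $F(A \smash B,\bbS) \cong F(A, B^\sharp)$ yields a map $X^\sharp(k) \to F(P(k_1) \smash \cdots \smash P(k_i), X^\sharp(i))$, whose adjoint is exactly
\[
\mu_g \colon P(k_1) \smash \cdots \smash P(k_i) \smash X^\sharp(k) \to X^\sharp(i).
\]
The $\Sigma_k$-equivariance of $m_g$ together with the naturality of the adjunction lets these components assemble into a well-defined map $\mu \colon P \bcirc X^\sharp \to X^\sharp$.

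For the second claim, I dualize $\rho \colon X \to X \circ Q$ to obtain $\rho^\sharp \colon (X \circ Q)^\sharp \to X^\sharp$, and precompose it with a natural assembly map $X^\sharp \circ Q^\sharp \to (X \circ Q)^\sharp$. This assembly is built levelwise from three canonical ingredients: the lax monoidal comparison $A^\sharp \smash B^\sharp \to (A \smash B)^\sharp$, the natural map from a wedge to the corresponding product, and the identification $F(Z/\Sigma_k,\bbS) \cong F(Z,\bbS)^{\Sigma_k}$ (combined, in the stable setting, with the norm map from $\Sigma_k$-coinvariants to $\Sigma_k$-invariants). The composite yields the desired right $Q^\sharp$-module structure $X^\sharp \circ Q^\sharp \to X^\sharp$.

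The main obstacle is verifying associativity. For the $P$-$\bcirc$-module, the hexagon involves the swap isomorphism $(P \circ P) \bcirc X^\sharp \cong P \bcirc (P \bcirc X^\sharp)$ flagged in the preceding definition; this swap is precisely the effect of the contravariance of $(-)^\sharp$ on the associativity square for the $P$-action on $X$. Concretely, applying $F(-,\bbS)$ to the identity $m \circ (\gamma \circ \mathrm{id}_X) = m \circ (\mathrm{id}_P \circ m)$ and unfolding both sides through the adjunctions used above reads off associativity for $\mu$ modulo the swap of the two copies of $P$. The $Q^\sharp$-module case is analogous, combining coassociativity of $\rho$ with naturality of the assembly map. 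Unitality in both cases is immediate.
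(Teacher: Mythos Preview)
The paper states this proposition without proof, so there is no argument to compare against directly. Your sketch is along the right lines and would be an acceptable justification.

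Your treatment of the first claim is clean and is the standard way to produce the $P$-$\bcirc$-module structure: one dualizes the components of $m\colon P\circ X\to X$ and observes that the resulting maps are indexed precisely by the summands of $(P\bcirc X^\sharp)(i)$, with the swap in the associativity hexagon arising from contravariance of $(-)^\sharp$. This matches how the construction is used later in the paper (e.g.\ in the discussion preceding Proposition~\ref{prop:StrongCompatibility}).

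For the second claim your route through an assembly map $X^\sharp\circ Q^\sharp\to (X\circ Q)^\sharp$ and a norm map from $\Sigma_k$-coinvariants to $\Sigma_k$-invariants is more elaborate than necessary. The norm is the one step where strict coherence is not entirely obvious, and it also presupposes additivity of $\V$. A cleaner argument, exactly parallel to your first part, works componentwise: the comodule structure gives maps $\rho_f\colon X(i)\to X(i_1)\smash\cdots\smash X(i_k)\smash Q(k)$ (in the conventions of \cite{Ching05} the coaction lands in the $\hcirc$-product, so these projections exist); composing the lax monoidal comparison $X^\sharp(i_1)\smash\cdots\smash X^\sharp(i_k)\smash Q^\sharp(k)\to\bigl(X(i_1)\smash\cdots\smash X(i_k)\smash Q(k)\bigr)^\sharp$ with $\rho_f^\sharp$ gives the desired component of $X^\sharp\circ Q^\sharp\to X^\sharp$ directly, and $\Sigma_k$-equivariance of the $\rho_f$ makes these descend to coinvariants without any norm. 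Associativity and unitality then follow from coassociativity and counitality of $\rho$ together with naturality and associativity of the lax monoidal comparison. This avoids the one genuinely delicate ingredient in your version.
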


\begin{proposition}
Let $X$ be a symmetric sequence. If $X$ is a left $P$-$\bcirc$-module, then $\prop X$ is a left $(\prop P)^{\op}$-module.
\end{proposition}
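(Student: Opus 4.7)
The plan is to transport the $\bcirc$-module structure $\nu \colon P \bcirc X \to X$ along the natural isomorphism of Remark \ref{remark:CompProdCompatibilitySec3}, and then verify the module axioms by applying $\prop$ to the corresponding diagrams defining a left $\bcirc$-module. Concretely, I would define the candidate structure map as the composite
$$
\eta \colon (\prop P)^{\op} \circ_\Sigma \prop X \xrightarrow{\cong} \prop(P \bcirc X) \xrightarrow{\prop(\nu)} \prop X,
$$
where the first map is the identification of Remark \ref{remark:CompProdCompatibilitySec3}. Unitality is then immediate: applying $\prop$ to $I \bcirc X \cong X$ produces the left unit axiom $\Sigma \circ_\Sigma \prop X \cong \prop X$, since $\prop(I) = \Sigma$ by monoidality of $\prop$ (Proposition \ref{prop:PropFunctorMonoidal}).

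For associativity, the key observation is that the non-associativity swap isomorphism $(P \circ P) \bcirc X \cong P \bcirc (P \bcirc X)$ (which exchanges the two copies of $P$) becomes the identity after applying $\prop$. Indeed, using the monoidality of $\prop$ together with the rule $(A \circ B)^{\op} = B^{\op} \circ A^{\op}$ from Remark \ref{Remark:InvolutionPrecatOp},
$$
\prop((P \circ P) \bcirc X) \cong (\prop P \circ \prop P)^{\op} \circ_\Sigma \prop X \cong (\prop P)^{\op} \circ_\Sigma (\prop P)^{\op} \circ_\Sigma \prop X,
$$
and exactly the same expression arises from unwinding $\prop(P \bcirc (P \bcirc X))$. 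Under this identification, the left vertical map $\prop(\mu \bcirc X)$ of the $\bcirc$-associativity diagram corresponds to $\prop(\mu)^{\op} \circ_\Sigma \id_{\prop X}$, which by definition is the multiplication on $(\prop P)^{\op}$ acting on $\prop X$. Thus applying $\prop$ to the associativity diagram for $\nu$ yields precisely the associativity diagram for $\eta$ as a left $(\prop P)^{\op}$-module structure.

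The main bookkeeping hurdle is checking consistency of the various $\op$-swaps: one has to confirm that the algebra structure on $(\prop P)^{\op}$ really is $\prop(\mu)^{\op}$, and that the swap in the isomorphism $(X \circ Y) \bcirc Z \cong Y \bcirc (X \bcirc Z)$ correctly corresponds to the identity under $\prop$ rather than to a nontrivial rearrangement. Both of these are formal consequences of the monoidality of $\prop$ combined with the contravariance of the $\op$-involution on composition products, so no genuine computation is required beyond setting up the identifications.
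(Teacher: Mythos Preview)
The paper states this proposition without proof, so there is nothing to compare against directly. Your argument is the natural formal one and is essentially correct: transport the structure map along the isomorphism $\prop(P\bcirc X)\cong(\prop P)^{\op}\circ_\Sigma\prop X$ of Remark~\ref{remark:CompProdCompatibilitySec3}, and then observe that the swap in $(P\circ P)\bcirc X\cong P\bcirc(P\bcirc X)$ is absorbed by the order-reversal of $(-)^{\op}$ on composition products, so that after applying $\prop$ the $\bcirc$-associativity diagram becomes precisely the left-module associativity diagram for $(\prop P)^{\op}$.

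One small remark on presentation: your phrase ``becomes the identity'' is slightly imprecise. What you really need is that the composite of natural isomorphisms
\[
(\prop P)^{\op}\circ_\Sigma(\prop P)^{\op}\circ_\Sigma\prop X
\;\cong\;\prop\bigl((P\circ P)\bcirc X\bigr)
\;\xrightarrow{\ \prop(\text{swap})\ }\;
\prop\bigl(P\bcirc(P\bcirc X)\bigr)
\;\cong\;(\prop P)^{\op}\circ_\Sigma(\prop P)^{\op}\circ_\Sigma\prop X
\]
is the associativity constraint of $\circ_\Sigma$, not a nontrivial braiding. This is indeed what the combination of Proposition~\ref{prop:PropFunctorMonoidal}, Remark~\ref{remark:CompProdCompatibilitySec3}, and $(A\circ B)^{\op}=B^{\op}\circ A^{\op}$ gives you, but strictly speaking it is a coherence statement about how these three isomorphisms interact, not merely the observation that source and target agree. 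Your closing paragraph acknowledges this; it would be worth making the coherence explicit rather than deferring it to ``formal consequences''.
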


Let us now come back to the problem we are interested in, fixing a reduced operad $P$ with finite levels. We know from section 7 of \cite{Ching05} that the bar construction $B(P,P,1)$ is a left $P$-module and a right $BP$-comodule. Therefore, from the results above, $\prop (B(P,P,1)^\sharp)$ is a $((\prop P)^{\op}, \prop (BP^\sharp))$-bimodule (the fact that the left and right module structure commute is straightforward). Equivalently, $\prop (B(P,P,1)^\sharp)^{\op}$ is a $({\prop (BP^\sharp})^\op, \prop P)$-bimodule. This can be stated equivalently as follows.
\begin{proposition}
There is a map of $\Sigma$-algebras
$$
\prop (BP^\sharp)^{\op} \rightarrow \mathrm{end}_{\prop P}^r (\prop (B(P,P,1)^\sharp)^{\op})
$$
\end{proposition}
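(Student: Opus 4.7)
My plan is to reduce this to the standard correspondence, valid in any monoidal biclosed category, between left module structures and algebra maps into endomorphism objects. Specifically, for a $\Sigma$-algebra $B$ and any right $B$-module $M$, equipping $M$ with a left $A$-module structure that commutes with the right $B$-action is the same data as giving a $\Sigma$-algebra map $A \to \hom_B^r(M,M) = \en_{B}^r(M)$. This is a direct consequence of the $(-\circ_\Sigma M, \hom^r(M,-))$ adjunction encoded in Proposition \ref{prop:BimodulesMonoidalBiclosed}: the adjoint of the left action map $A\circ_\Sigma M \to M$ lands in the equivariant hom object precisely because the action commutes with the right $B$-action, and the associativity and unitality of the action translate exactly into the multiplicativity and unitality of the resulting map.

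With this principle in hand, the proposition follows by applying it with $A = \prop(BP^\sharp)^{\op}$, $B = \prop P$, and $M = \prop(B(P,P,1)^\sharp)^{\op}$: the preceding proposition constructed exactly the $(\prop(BP^\sharp)^{\op}, \prop P)$-bimodule structure on $M$ needed to feed into the machinery, so the desired $\Sigma$-algebra map is obtained for free. In slightly more detail, I would spell out the adjoint of the left action, check by a short diagram chase that it factors through the equalizer defining $\hom_{\prop P}^r$, and then verify multiplicativity using the associativity diagram of the $\prop(BP^\sharp)^{\op}$-action on $M$.

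The only subtlety to track carefully is the bookkeeping of the opposites: the map starts at $\prop(BP^\sharp)^{\op}$, which forces us to use the left module structure on $M = \prop(B(P,P,1)^\sharp)^{\op}$ as a left $\prop(BP^\sharp)^{\op}$-module rather than as a left $\prop(BP^\sharp)$-module. This is precisely where the compatibility recorded in Remarks \ref{Remark:InvolutionPrecatOp} and \ref{remark:CompProdCompatibilitySec3} enters, translating the left $P$-$\bcirc$-module structure on $B(P,P,1)^\sharp$ coming from $BP^\sharp$ into a bona fide left $\prop(BP^\sharp)^{\op}$-action after applying $\prop$ and taking opposites. Beyond this essentially notational point, I do not anticipate any obstacle: the substantive content of the result has already been absorbed into the identification of the bimodule structure in the previous proposition, and the present statement is merely its equivalent repackaging.
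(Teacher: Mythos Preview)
Your proposal is correct and matches the paper's approach exactly: the paper does not give a separate proof but simply states that the proposition is an equivalent reformulation of the $(\prop(BP^\sharp)^{\op}, \prop P)$-bimodule structure on $\prop(B(P,P,1)^\sharp)^{\op}$ established just before it. Your elaboration via the adjunction of Proposition~\ref{prop:BimodulesMonoidalBiclosed} is precisely the implicit reasoning behind the paper's phrase ``This can be stated equivalently as follows.''
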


Let us fix $\Q: \Alg_\Sigma\rightarrow \Alg_\Sigma$ a chosen cofibrant replacement functor for the category of $\Sigma$-algebras, and fix a cofibrant replacement functor $(-)_\cof$ for the category of $(\Q\prop(BP^\sharp)^\op,\prop P)$-bimodules. Then $(-)_\cof$ is also a cofibrant replacement functor in the category of right $\prop P$-modules. Indeed, on one hand, the forgetful functor is right Quillen. On the other hand, due to the fact that $\Q\prop(BP^\sharp)^\op$ is in particular cofibrant as a $\Sigma$-bimodule, every generating cofibration of the category of $(\Q\prop(BP^\sharp)^\op,\prop P)$-bimodules is a cofibration of right $\prop P$-modules as well. We thus get an algebra map
$$
\vp : \Q\prop (BP^\sharp)^{\op} \rightarrow \R\mathrm{end}_{\prop P}^r (\prop (B(P,P,1)^\sharp)^{\op})
$$


\begin{remark}
There is no need to consider $\Q \prop (BP^\sharp)^{\op}$ instead of $\prop (BP^\sharp)^{\op}$ if there exists a functorial cofibrant replacement on $\Mod_{\prop P}$ that is $_\Sigma\Mod_\Sigma$-enriched. In general this is quite a strong condition to ask (see \cite{RiehlCat}, section 13.2 for conditions for the existence of enriched functorial factorizations), so we will not consider this case. Let us also mention that instead of the cofibrant replacement functor $(-)_\cof$, we could use $B(-,\prop P,\prop P): \Mod_{\prop P} \rightarrow \Mod_{\prop P}$. Although it is merely a functorial cofibrant {\it approximation} functor (meaning the map from it to the identity functor is an object-wise weak equivalence but not necessarily an object-wise fibration), it satisfies the properties we need, mostly because it has the nice property that there is a natural isomorphism $B(X\circs Y, \prop P, \prop P)\cong X\circs B(Y,\prop P,\prop P)$ so any left module structure on a right $\prop P$-module $Y$ induces a left module structure on $B(Y,\prop P,\prop P)$.
\end{remark}

\begin{remark}\label{remark:PropsOfOperadsAreCofibrant}
Notice that if $M$ is a symmetric sequence, then the right $\Sigma_j$-action on $(\prop M)(i,j)$ is free, for any $i,j$ (because it comes from the action of $\Sigma_j$ on the set of surjections from $i$ to $j$ which is free). In particular, if $M$ is cofibrant as a symmetric sequence, $\prop M$ is a cofibrant right $\Sigma$-module.
\end{remark}

\begin{remark}
 Whenever $A$ is a cofibrant left  $\Sigma$-module and $C$ a $\Sigma$-algebra, the functor $\hom^l_\Sigma(A,-): \Mod_C \rightarrow \Mod_C$ is right Quillen, therefore $A\tens_\Sigma -: \Mod_C \rightarrow \Mod_C$ is left Quillen, and dually if $A$ is a left $\Sigma$-module.
\end{remark}

\begin{lemma}\label{lemma:PropDual}
Let $M$ be a levelwise finite symmetric sequence. Then there is a natural weak equivalence
$$
\prop(M^\sharp)^\op \xrightarrow{\sim} \hom_\Sigma (\prop M, \Sigma)=_\mathrm{def} D(\prop M) 
$$
of $\Sigma$-bimodules.
\end{lemma}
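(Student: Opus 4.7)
The plan is to realize $\vp$ as the adjoint of a duality pairing, and then verify it is a levelwise weak equivalence by imitating the chain of natural maps appearing in the proof of Proposition~\ref{prop:compatibility}. By the hom-tensor adjunction for $\circs$, giving $\vp$ is the same as giving an evaluation map of $\Sigma$-bimodules
\[
\mathrm{ev} : \prop(M^\sharp)^{\op} \circs \prop(M) \longrightarrow \Sigma.
\]
By Proposition~\ref{prop:PropFunctorMonoidal} combined with Remark~\ref{remark:CompProdCompatibilitySec3}, the source is closely related to $\prop(M^\sharp \bcirc M)$, and the arity-wise Spanier-Whitehead counits $M^\sharp(n) \smash M(n) \to \bbS$ assemble into a pairing $M^\sharp \bcirc M \to 1$ (the unit symmetric sequence, which maps to $\Sigma$ under $\prop$). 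Applying $\prop$ and passing to the quotient $\circs$ produces ev. The $\Sigma$-equivariance of the pairing in each arity makes ev a $\Sigma$-bimodule map, so its adjoint $\vp$ is one too, and naturality in $M$ is manifest.

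To check that $\vp$ is a weak equivalence, I would work at each level $(i,j)$ and factor the map as
\[
\bigvee_{f: j \to i} \bigwedge_k M^\sharp(j_k) \to \bigvee_{f} F\left(\bigwedge_k M(j_k), \bbS\right) \to F(\prop M(j,i), \bbS) \to \map_{\Sigma_i}(\prop M(j,i), \Sigma_i),
\]
which is essentially the chain implicit in the proof of Proposition~\ref{prop:compatibility}. The first arrow is a weak equivalence because each $M(j_k)$ is dualizable by the finiteness hypothesis, so the natural comparison from the smash product of duals to the dual of the smash product is an equivalence. The second arrow is an equivalence because there are only finitely many maps $j \to i$, so the wedge of duals agrees with the dual of the wedge. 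The third arrow expresses that $\Sigma_i$ is self-dual as a $\Sigma_i$-spectrum in the sense used in Proposition~\ref{prop:compatibility}, and is an equivalence because $\prop M(j,i)$ decomposes as a wedge of free $\Sigma_i$-orbits on the surjective summands, the non-surjective summands being trivial by the reducedness hypothesis in force throughout the section.

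The main obstacle I anticipate is the last comparison: showing that the natural map $F(X, \bbS) \to \map_{\Sigma_i}(X, \Sigma_i)$ is a weak equivalence for $X = \prop M(j,i)$. This hinges on the freeness of the $\Sigma_i$-action on $\prop M(j,i)$, which in turn comes from the fact that the right action of $\Sigma_i$ by postcomposition on $\mathrm{Map}(j,i)$ is free on the subset of surjective maps. Once this freeness point is handled carefully, the remaining checks (naturality in $M$, and compatibility with both the left and right $\Sigma$-actions from the bimodule structure) follow formally from the description of $\vp$ as the adjoint of the pairing ev.
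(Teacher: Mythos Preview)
Your proposal is correct and follows essentially the same approach as the paper. You construct the map exactly as the paper does, as the adjoint of the pairing $M^\sharp \bcirc M \to 1$ pushed through $\prop$; and your verification that it is a levelwise equivalence unpacks precisely the commutative triangle the paper draws (with $(\prop M)^\sharp$ as the third vertex), the diagonal and vertical edges of that triangle being the equivalences you spell out from the proof of Proposition~\ref{prop:compatibility} together with the freeness of the $\Sigma_i$-action on $\prop M(j,i)$.
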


\begin{proof}
The map is induced by
$$
\bigvee_n M(n)^\sharp \smash_\Sigma M(n) \rightarrow \S
$$
which can be seen as a map of symmetric sequences $M^\sharp \bcirc M \rightarrow 1$, and applying the functor $\prop: \V^{\N} \rightarrow \V^{\N\times \N}$, which is monoidal by Proposition \ref{prop:PropFunctorMonoidal}. It  is an equivalence because it fits in the commutative diagram
$$
\xymatrix{
\prop (M^\sharp)^\op \ar[r] \ar[rd]_-\sim &\hom_\Sigma(\prop M, \Sigma) \ar[d]^\sim\\
&(\prop M)^\sharp
}
$$
\end{proof}

The canonical map $B(P,P,1)\rightarrow 1$ obtained from the augmentation of $P$ induces a map
$$
\Sigma \rightarrow (\prop B(P,P,1)^\sharp)^\op
$$
and it is an equivalence of right $\prop P$-modules. In addition, Lemma \ref{lemma:PropDual} gives us a weak equivalence of right $\prop P$-modules
$$
\prop(B(P,P,1)^\sharp)^\op)_\cof \xrightarrow{\sim} D(\prop B(P,P,1))
$$
We will show that the following diagram is commutative, and, therefore, that $\vp$ is a weak equivalence. The diagonal arrow is the map from Lemma \ref{lemma:PropDual} with $M=\prop BP$.
$$
\xymatrix{
\Q\prop(BP^\sharp)^\op \ar[r]^-{\vp} \ar[d]^\sim &\mathrm{end}_{\prop P}^r(\prop (B(P,P,1)^\sharp)^\op_\cof) \ar[d]^\sim\\
\prop(BP^\sharp)^\op \ar@/_1pc/[rdd]_-\sim &\hom^r_{\prop P} (\Sigma_\cof, D(\prop B(P,P,1))) \ar[d]^\cong \\
&\hom^r_{\Sigma} (\Sigma_\cof \circ_{\prop P} \prop B(P,P,1), \Sigma)\\
&\hom_\Sigma^r(\Sigma \circ_{\prop P} \prop B(P,P,1), \Sigma)= D(\prop BP) \ar[u]_\sim \\
}
$$

This is equivalent to proving that the adjoint diagram below commutes.
$$
\xymatrix@C=1em{
&\Q\prop(BP^\sharp)^\op\circs\Sigma_\cof \circ_{\prop P} \prop B(P,P,1) \ar[d] \ar@/_2em/[ldd] \\
&\Q\prop(BP^\sharp)^\op\circs\prop(B(P,P,1)^\sharp)_\cof \circ_{\prop P} \prop B(P,P,1) \ar[d]\\
\prop (BP^\sharp) \circs \prop BP \ar[rd] &\prop (B(P,P,1)^\sharp)^\op_\cof \circ_{\prop P} \prop B(P,P,1) \ar[d] \\
&\Sigma
}
$$


We will denote $\alpha$ and $\beta$ the left-hand and right-hand composites of this diagram respectively. Remark that if we take $(-)_\cof$ to be a functorial cofibrant replacement in the category of $(\Q\prop(BP^\sharp)^\op,\prop P)$-bimodules obtained from the functorial factorization functors of the model category structure, it comes equipped with a natural transformation $(-)_\cof \rightarrow \Id$. Hence there is a commuting square
$$
\xymatrix{
\Sigma_\cof \ar[d]^\sim \ar[r] &\prop(B(P,P,1)^\sharp)^\op_\cof \ar[d]^\sim\\
\Sigma \ar[r]&\prop(B(P,P,1)^\sharp)^\op
}
$$
Using this together with the fact that
$$
\prop(B(P,P,1)^\sharp)^\op_\cof \rightarrow \prop(B(P,P,1)^\sharp)^\op
$$
is in particular a left $\Q\prop(BP^\sharp)^\op$-module map, we get a commuting diagram of right $\prop P$-modules
$$
\xymatrix{
\Q\prop(BP^\sharp)^\op \circs \Sigma_\cof \ar[r] \ar[d] &\Q\prop(BP^\sharp)^\op \circs \Sigma \ar[d]\\
\Q\prop(BP^\sharp)^\op \circs \prop(B(P,P,1)^\sharp)^\op_\cof \ar[r] \ar[d]&\Q\prop(BP^\sharp)^\op \circs \prop(B(P,P,1)^\sharp)^\op \ar[d] \\
\prop (B(P,P,1)^\sharp)^\op_\cof \ar[r]& \prop( B(P,P,1)^\sharp)^\op
}
$$
and the problem is reduced to showing that
$$
\xymatrix{
\prop(BP^\sharp)^\op \circs \Sigma \circ_{\prop P} \prop B(P,P,1) \ar[r] \ar[dr]&\Sigma \\
&\prop(B(P,P,1)^\sharp)^\op \circ_{\prop P} \prop (B(P,P,1)) \ar[u]
}
$$
commutes, where the diagonal map consists in applying $\Sigma \rightarrow \prop(B(P,P,1)^\sharp)^\op$ and using the left $\prop(BP^\sharp)^\op$-module structure on $\prop(B(P,P,1)^\sharp)^\op$. The other two maps are obtained as in Lemma \ref{lemma:PropDual}.

Using the compatibility of $\prop$ with the composition products of symmetric sequences $\circ$ and $\bcirc$ (see \ref{prop:PropFunctorMonoidal} and \ref{remark:CompProdCompatibilitySec3}), it is enough to show that the following square diagram of symmetric sequences commutes.
$$
\xymatrix{
BP^\sharp \bcirc_P B(P,P,1) \ar[r]^-\cong \ar[d] &BP^\sharp \bcirc BP \ar[d]\\
B(P,P,1)^\sharp \bcirc_P B(P,P,1) \ar[r] &1
}
$$

\begin{remark}
The notation $M \bcirc_P N$ makes sense when $M$ is a {\it left} $P-\bcirc$-module and $N$ is a left $P$-module, and the evaluation map associated to the dual of a symmetric sequence with a left $P$-module structure factors through this $P$-equivariant composition product. In addition, if $X$ is a spectrum seen as a symmetric sequence concentrated in arity 1, and we define $\map(N,X)(i)= \map(N(i),X)$, then there is an exponential correspondence between maps of symmetric sequences $M\bcirc_P N\rightarrow X$ and maps of left $P-\bcirc$-modules $M\rightarrow \map(N,X)$.
\end{remark}

Using this remark, the square diagram above is equivalent to
$$
\xymatrix{
BP^\sharp \ar[d]_{g^\sharp} \ar[rd]^{f^\sharp} \\
B(P,P,1)^\sharp \ar[r]^\id &B(P,P,1)^\sharp
}
$$
where $f: B(P,P,1)\rightarrow BP$ uses the augmentation of $P$ to collapse the leftmost $P$ factor in each simplicial level, and
$$
g:B(P,P,1) \rightarrow B(P,P,1) \hcirc BP \rightarrow 1 \hcirc BP = BP.
$$
This diagram commutes by definition of the $BP$-comodule structure on $B(P,P,1)$ defined in section 7.3 of \cite{Ching05}. Therefore, we can improve Proposition \ref{prop:compatibility} to get the following result.

\begin{proposition}\label{prop:StrongCompatibility}
Let $P$ be a reduced, levelwise finite operad in (EKMM) spectra. Then the map
$$
\vp: \Q\prop (BP^\sharp)^\op \rightarrow \R\mathrm{end}^r_{\prop P}(\prop (B(P,P,1)^\sharp)^\op)
$$
is a weak equivalence of $\Sigma$-algebras.
\end{proposition}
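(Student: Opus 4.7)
The plan is to promote the equivalence of $\Sigma$-bimodules from Proposition \ref{prop:compatibility} to an equivalence of $\Sigma$-algebras by constructing $\vp$ directly from a $(\prop(BP^\sharp)^\op, \prop P)$-bimodule structure on $\prop(B(P,P,1)^\sharp)^\op$, rather than from a zigzag. Once $\vp$ is available as an algebra map, its underlying map of $\Sigma$-bimodules can be compared with the natural equivalences of Proposition \ref{prop:compatibility}, so the essential content is (a) exhibiting the bimodule, and (b) verifying the compatibility needed for $\vp$ to agree, up to cofibrant replacement and the $\prop$-duality equivalence, with the underlying equivalence of bimodules.

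For step (a), I would start from Ching's structures on $B(P,P,1)$: it carries a left $P$-module and a right $BP$-comodule structure, and these commute. Dualizing levelwise turns the left $P$-module into a left $P$-$\bcirc$-module (this is where the $\op$ appears) and the right $BP$-comodule into a right $BP^\sharp$-module. Applying the monoidal functor $\prop$ (Proposition \ref{prop:PropFunctorMonoidal}) together with its compatibility with $\bcirc$ (Remark \ref{remark:CompProdCompatibilitySec3}) converts these into a left $\prop(BP^\sharp)^\op$ and right $\prop P$ action on $\prop(B(P,P,1)^\sharp)^\op$; commutativity of the two actions is inherited from Ching. Because $\Sigma \to \prop(B(P,P,1)^\sharp)^\op$ (induced by the augmentation $B(P,P,1)\to 1$) is a weak equivalence of right $\prop P$-modules, replacing a suitable cofibrant form yields an algebra map into the derived endomorphisms. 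Here I would first apply $\Q$ to ensure the source is cofibrant so that $\vp$ exists strictly as an algebra map, and pick $(-)_\cof$ functorially on $(\Q\prop(BP^\sharp)^\op,\prop P)$-bimodules (which, as observed, also serves as a right $\prop P$-cofibrant replacement since $\Q\prop(BP^\sharp)^\op$ is $\Sigma$-cofibrant, using Remark \ref{remark:PropsOfOperadsAreCofibrant}).

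For step (b), I would chase $\vp$ through the adjunction $(-\circ_{\prop P} \prop B(P,P,1)) \dashv \hom^r_{\prop P}(-,\Sigma\circ_{\prop P}\prop B(P,P,1))$ and through Lemma \ref{lemma:PropDual}, so that verifying $\vp$ is an equivalence reduces to the commutativity of a triangle of right $\prop P$-modules relating $\Q\prop(BP^\sharp)^\op \circ \Sigma_\cof \circ_{\prop P} \prop B(P,P,1)$, $\prop(B(P,P,1)^\sharp)^\op_\cof \circ_{\prop P} \prop B(P,P,1)$, and $\Sigma$. Using naturality of $(-)_\cof \to \Id$ to strip $\Q$ and $(-)_\cof$, and using the monoidality of $\prop$ together with the identification $\prop(M\bcirc_P N) \cong \prop(M)^\op \circ_{\prop P} \prop(N)$, the triangle is the image under $\prop$ of the triangle of symmetric sequences
\[
\xymatrix{
BP^\sharp \bcirc_P B(P,P,1) \ar[r] \ar[d] & BP^\sharp \bcirc BP \ar[d] \\
B(P,P,1)^\sharp \bcirc_P B(P,P,1) \ar[r] & 1
}
\]
where the right vertical map is the counit for the dual pairing, the top is induced by $B(P,P,1)\to BP$ via the augmentation, and the left vertical is induced by $BP^\sharp \to B(P,P,1)^\sharp$ dual to the $BP$-coaction on $B(P,P,1)$.

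The main obstacle will be verifying that this last square (equivalently, the adjoint triangle $f^\sharp = g^\sharp$ of left $P$-$\bcirc$-modules $BP^\sharp \to B(P,P,1)^\sharp$) actually commutes. This is where the geometry of Ching's construction does the work: one map uses the augmentation $P\to 1$ to collapse the leftmost $P$-factor in each simplicial level, while the other uses the $BP$-coaction defined in Section 7.3 of \cite{Ching05}; their equality is essentially the defining property of that coaction, so the verification amounts to unpacking that definition at the simplicial level and passing to geometric realization, after which Lemma \ref{lemma:PropDual} and the finiteness hypothesis upgrade the $\Sigma$-bimodule equivalence into an algebra equivalence.
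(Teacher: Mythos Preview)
Your proposal is correct and follows essentially the same route as the paper: construct the $(\prop(BP^\sharp)^\op,\prop P)$-bimodule structure on $\prop(B(P,P,1)^\sharp)^\op$ from Ching's $P$-module and $BP$-comodule structures, pass to $\Q$ and $(-)_\cof$ to obtain $\vp$ as a strict algebra map, and then reduce via adjunction and Lemma \ref{lemma:PropDual} to the commutativity of the square of symmetric sequences you wrote, which holds by the very definition of the $BP$-coaction in \cite{Ching05}, \S7.3. The paper's argument unwinds the same chain of diagrams in slightly more detail, but the strategy, the key square, and the final appeal to Ching's definition are identical.
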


\section{External modules}


In this section, we recall a standard  way of seeing functors as (external) modules over a category and discuss the link between the categories of external modules over $C$ and $K(C)$. Just like we discussed in section 2 a notion of precategory as an object that when endowed with an algebra structure becomes a category, we now consider {\it prefunctors}, objects that when equipped with a module structure over a category $C$ become functors $C\rightarrow \V$. This approach of seeing functors as modules is classical, and is discussed in section 6 of \cite{SchSh03} for instance. There are several reasons for looking at these external modules instead of the "internal" ones considered previously. First, this is the natural setting for defining enriched functors. In addition, a structure of external module over the prop of an operad is the same as a left module structure over that operad, as stated in Proposition \ref{prop:ExternalModulesPropOperad}.

We will discuss Quillen equivalences between categories of $C$- and $K(C)$-modules, namely Corollary \ref{cor:QuillenEq1} and Proposition \ref{prop:DoubleDualExternalModule}. The former is implied by a result of \cite{SchSh03}, while the latter is new.

\subsection{Definitions}

\begin{definition}
As before, S is a fixed set which we call the object set. We call $\V^S$ the category of prefunctors and define external tensor products over the category of precategories $\Precat$ by
$$
(C\tens Y)(i)= \bigvee_{j\in S} C(i,j)\Smash Y(j)
$$
and
$$
(Y\tens C)(i)= \bigvee_{i\in S} Y(j) \Smash C(j,i)
$$
for $C\in \Precat$ and $Y\in \V^S$.
\end{definition}

Note that $C^{op}\tens Y\cong Y\tens C$ unlike for the internal composition product in $\Precat$, which means that $Y$ is an external right $C$-module if and only if it is an external left $C^{op}$-module. The tensor product satisfies natural compatibility properties with respect to the monoidal product $\circ$ of $\Precat$, so that $\V^S$ becomes what is called a category {\it tensored over} $\Precat$. The term {\it prefunctor} is justified by the fact that for $Y\in \Prefun$, a right $C$-module structure on $Y$ is the same as a structure of $\V$-enriched functor $Y: C\rightarrow \V$, something that is easily checked using the hom-tensor adjunction property in $\V$. Moreover, right $C$-module morphisms correspond to $\V$-natural transformations. These categories of external modules are equipped with model structures transferred from the levelwise model structure on $\Prefun$, in a similar way as in Section 1.

\begin{remark}
Given $R$ a category with object set $S$, an internal right $R$-module, as defined in section 2, is not the same thing as an external right $R$-module. However, internal $R$-bimodules and external $(R^{op}\times R)$-modules are the same.
\end{remark}

\begin{proposition}\label{prop:ExternalModulesPropOperad}
Given an operad $P$ in $\V$, an external $\prop P$-module structure is the same as a left $P$-module structure.
\end{proposition}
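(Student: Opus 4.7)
The plan is to directly unfold both structures and observe that they carry the same data, using the monoidality of $\prop$ (Proposition \ref{prop:PropFunctorMonoidal}) to match the coherence axioms.

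First I would unpack an external $\prop P$-module structure on a prefunctor $Y\in \V^\N$. Using the explicit description
\[
\prop P(m,n) = \coprod_{f: m \to n} P(m_1)\smash\cdots\smash P(m_n), \qquad m_i = |f^{-1}(i)|,
\]
together with the reduced hypothesis (so that only surjective $f$ contribute), the defining structure map decomposes level by level into a family of pieces indexed by surjections $f:m\twoheadrightarrow n$ and carrying the natural $\Sigma_m\times\Sigma_n$-equivariance. In particular, the subgroup $\Sigma_n\subseteq \prop P(n,n)$ coming from the identity operations of $P$ gives $Y(n)$ a $\Sigma_n$-action, so that $Y$ becomes an honest symmetric sequence. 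Unpacking a left $P$-module structure $P\circ Y\to Y$ by indexing the composition product over the same set of surjections produces a family of structure maps that, after using the symmetry of $\smash$ in $\V$ to reorder factors, matches the previous one bijectively.

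Next I would verify that the associativity and unit axioms on the two sides correspond. This is where the monoidality of $\prop$ is doing the real work: the monoid multiplication $\prop P \circ_\Sigma \prop P \to \prop P$ is by Proposition \ref{prop:PropFunctorMonoidal} the image under $\prop$ of the operadic composition $P\circ P \to P$, so the associativity axiom for an external $\prop P$-action unfolds exactly to the operadic associativity of the corresponding left $P$-action. Similarly the unit map $I \to \prop P$ is the image of the operadic unit $1 \to P$, giving the unit axiom. Running the unfolding backwards produces, from any left $P$-module structure on a symmetric sequence $Y$, an external $\prop P$-module structure on the underlying prefunctor, and the two assignments are mutually inverse.

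Finally, a morphism of prefunctors is compatible with an external $\prop P$-action precisely when it is compatible with every piece of the unfolded structure, so the bijection upgrades to an isomorphism of categories. The main obstacle throughout is combinatorial bookkeeping with surjections, partitions and $\Sigma$-equivariance; no conceptual difficulty arises once the monoidality of $\prop$ is invoked.
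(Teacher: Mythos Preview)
The paper states this proposition without proof, so there is no argument to compare against. Your sketch is the natural and correct way to fill it in: unfold the structure map using the explicit formula for $\prop P$, observe that the restriction to $\Sigma \subseteq \prop P$ turns the prefunctor $Y$ into a symmetric sequence, match the remaining components with the data of a left $P$-module map $P \circ Y \to Y$, and check that the coherence axioms correspond.

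Two minor points. First, the proposition does not assume $P$ is reduced, so restricting to surjective $f$ is unnecessary; the correspondence holds term by term for arbitrary $f:m\to n$. Second, what you actually need for matching the axioms is not literally Proposition~\ref{prop:PropFunctorMonoidal} but its external analogue: the module associativity (restricted along the unit $\Sigma \to \prop P$ in the second factor) forces the action $\prop P \otimes Y \to Y$ to descend to $\prop P \otimes_\Sigma Y$, and this quotient is naturally identified with $P \circ Y$ by the same groupoid argument as in the proof of Proposition~\ref{prop:PropFunctorMonoidal}. With these small adjustments your argument goes through.
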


We also have copowers which we may call {homomorphism prefunctors}, defined by
\begin{align*}
\homl(C,Y)(i)&= \prod_{j\in S} \map(C(j,i), Y(j)) \\
\homr(C,Y)(i)&= \prod_{j\in S} \map(C(i,j), Y(j))
\end{align*}
for $C$ a precategory, $Y$ a prefunctor, and $i$ in $S$; note that $\homl(C,Y)=\homr(C^{op}, Y)$. In addition, associated to a pair of prefunctors $Y,Z$ is a {\it homomorphism precategory} with
$$
Z^Y(i,j)= \map(Y(i), Z(j))
$$
which is a category in the $Y=Z$ case. One can also note that $\Prefun(Y,Z)= \Precat(\mathbb{1}, Z^Y) = \prod_{i\in S} Z^Y(i,i)$, which is the same relationship as the one between morphisms and homomorphisms of precategories. This homomorphism precategory gives $\Prefun$ a $\Precat$-enriched category structure. The tensor products and hom-objects defined above are linked by the following natural isomorphisms, which justifies the use of the term {\it copowers} to describe the hom prefunctors described above.
$$
\Precat(C,Z^Y)\cong \Prefun(Z,\homr(C,Y) \cong \Prefun(Z \tens C, Y)
$$
In a similar fashion to the context of precategories, we define equivariant hom objects as the equalizer of a pair of maps encoding the commutativity of homomorphisms and module structures. Given $Y$ an external right $C$-module, and $M$ an internal right $C$-module, we define
\begin{align*}
    \hom_C(M,Y) &= \mathrm{eq} \left(\homr(M,Y) \rightrightarrows \homr(M\circ C, Y)\right)
\end{align*}
and similarly for left $C$-modules. In the remainder of this section we will make our notation less cluttered by writing $\hom_C(M,Y)$ regardless of whether we are dealing with left or right modules. One should always keep in mind that the definition of a hom-object of left modules uses $\hom^l$ while for right modules it uses $\hom^r$. In addition, we will write $\Hom(-,-): \Precat^{op} \times \Precat \rightarrow \Precat$ for the internal homomorphism precategory bifunctor introduced in section 2, in order to avoid confusion with the external hom objects.

\begin{remark}
It is useful to observe that given $M$ an internal left $C$-module and $Y$ an external left $C$-module, the hom object $\hom_C(M,Y)$ is nothing more than the coproduct $\bigvee_{i\in S} \Nat_C(M(-,i),Y)$. Similarly, given $M$ and $N$ two internal right $R$-modules, we have
$$
\Hom_C(M,N) = \bigvee_{j\in S} \hom_C(M, N(-,j)) = \Nat_C(M(-,i), N(-,j))
$$
\end{remark}




\subsection{Modules over a category and its Koszul dual}

We now recall some of the notions of  \cite{DGI} necessary to formulate a translation of Theorem 4.9 of that paper to our context, relating $K(C)$-modules and $R$-cellular $C$-modules by a Quillen equivalence under mild assumptions. The material presented in this section is not really new, as its main result, Corollary \ref{cor:QuillenEq1}, follows from Theorem 3.9.3 of \cite{SchSh03}. From now on, in this section, the notation $\hom_C$ should be understood as {\it derived} homomorphisms of $C$-modules, i.e. $C$-equivariant homomorphisms from a cofibrant replacement to a fibrant replacement.

\begin{definition}\label{def:SmallObject}
An internal (left or right) $C$-module $X$ is said to be {\it small} if the representable functor $\hom_C(X,-)$ commutes with homotopy coproducts. As mentioned in (\cite{DGI}, 4.7) this is equivalent to saying that $X$ is finitely built from $C$, meaning that $X$ can be constructed from $C$ using equivalences, homotopy cofiber sequences and retracts.
\end{definition}

\begin{definition}
An {\it $R$-equivalence} of (left or right) $C$-modules $U\rightarrow V$ is a $C$-module map inducing an equivalence $\hom_C(R,U)\rightarrow \hom_C(R,V)$. If for an internal $R$-module $M$, the functor $\hom_C(M,-)$ sends $R$-equivalences to equivalences, $M$ is said to be {\it cellular}.
\end{definition}

\begin{definition}
An external right $C$-module $M$ is said to be {\it effectively constructible from $R$} if the evaluation map
$$
\hom_C(R,M)\tens_{K(C)} R \rightarrow M
$$
is an equivalence. Note that in the analogous definition for a left $C$-module $M$, the evaluation map has the order of the factors of the source swapped:
$$
R\tens_{K(C)} \hom_C(R,M)\rightarrow M
$$
\end{definition}

We will now use the results from section 3.9 of \cite{SchSh03} to derive an interesting corollary in our setting. First, looking at 3.9.1 of this same source, they define an endomorphism category construction $\E(\G)$ for a set $\G$ of objects in a spectral category $\D$. In our case, with $\D$ the category of $R$-cellular left $C$-modules, if we take $\G$ to be the set of representable functors $R(-,i)$ for $i\in S$, this definition coincides with the left-$C$-modules version of the Koszul dual $K(C)$, namely
$$
\E(\G)(R(-,i), R(-,j)) = \Nat_{C^{op}}(R(-,i), R(-,j)) = K(C)(i,j)
$$
Similarly it is also true that what they call $\Hom(\G, Y)$, for $Y$ an external left $C$-module, is the same as the definition of $\hom_C(R,Y)$ that we gave above. Now we want to assume that $\G$ is a set of compact generators for $\D$ (where compact means small as in definition \ref{def:SmallObject}). It is reasonable to make this assumption; think for example about the case $R=\Sigma$, or more generally the case where $R$ consists of a tower of monoids, each of which is compact/small in the base category $\V$. In addition, observe that every element of $\G$ is small if and only if the internal left $C$-module $R$ is small. Note however that every element of $\G$ being small does not imply that the {\it external} left $C$-module $\bigvee_{i\in S} R(-,i) = R\tens S$ is small.

\begin{corollary}\label{cor:QuillenEq1}
Assume that $R$ is small as a $C$-module. Then the adjunction
$$
R\tens_{K(C)} - : {_{K(C)}}\Mod \leftrightarrows {_C}\Mod : \hom_C(R,-)
$$
induces a Quillen equivalence between the category of left $K(C)$-modules and the category of $R$-cellular left $C$-modules.
\end{corollary}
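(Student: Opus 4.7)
The plan is to invoke Theorem 3.9.3 of \cite{SchSh03}, which associates to a stable $\V$-model category $\D$ and a set $\G$ of cofibrant objects a Quillen adjunction between the category of left modules over the endomorphism ringoid $\E(\G)$ and $\D$ itself; when $\G$ consists of compact objects, this adjunction restricts to a Quillen equivalence onto the $\G$-cellular subcategory. I would apply this with $\D = {_C}\Mod$ equipped with the transferred projective model structure of Section 2, and with $\G = \{R(-,i) : i \in S\}$, where $R(-,i)$ is the summand of the internal left $C$-module $R$ supported at target $i$.

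First I would verify compactness. The hypothesis that $R$ is small as an internal $C$-module, together with the decomposition $R = \bigvee_{i\in S} R(-,i)$, forces each summand $R(-,i)$ to be small as well, since smallness passes to retracts. Hence $\G$ consists of compact objects of $\Ho({_C}\Mod)$, which is exactly the input needed for the equivalence conclusion (rather than just an adjunction) in \cite{SchSh03}.

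Next I would unpack the identification of all the data. The paragraph preceding the statement already observes that $\E(\G)(i,j) = \Nat_C(R(-,i), R(-,j)) = K(C)(i,j)$, so left modules over $\E(\G)$ are exactly left $K(C)$-modules. Similarly, the right adjoint $Y \mapsto \Hom(\G, Y)$ of \cite{SchSh03} agrees with $\hom_C(R,-)$, and its left adjoint with the relative tensor product $R \tens_{K(C)} -$. Finally, the $\G$-cellular subcategory of Schwede-Shipley is by definition the localizing subcategory of $\Ho({_C}\Mod)$ generated by $\G$, which coincides with the $R$-cellular $C$-modules: both are characterized as the class of $C$-modules on which $\hom_C(R,-)$ sends $R$-equivalences to equivalences.

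The one delicate point is the last identification, between the Schwede--Shipley notion of $\G$-cellularity and the $R$-cellular modules used in the present paper; the rest is bookkeeping. Once that dictionary is in place, the conclusion is a direct translation of Theorem 3.9.3, with no further analytic content to check.
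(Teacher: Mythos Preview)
Your approach is essentially the paper's: both reduce the statement to Theorem 3.9.3 of \cite{SchSh03} after the identifications $\E(\G)\cong K(C)$ and $\Hom(\G,-)\cong\hom_C(R,-)$ already made in the paragraph preceding the corollary, and both isolate the matching of the two cellularity notions as the one nontrivial step. The paper handles that step by citing 4.1 of \cite{DGI} (which in turn points to Theorem 5.1.5 of \cite{hirschhorn}) for the fact that the localizing subcategory generated by $R$ coincides with the $R$-cellular modules; your last paragraph flags the same point but the asserted common characterization is garbled (cellularity of $M$ means $\hom_C(M,-)$, not $\hom_C(R,-)$, detects $R$-equivalences) and in any case just restates the definition of one side, so you would still need that reference or an argument.
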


\begin{proof}
The only thing to argue is that a $C$-module is $R$-cellular if and only if it is built from $R$, that is, if and only if it belongs to the smallest localizing subcategory of $\CMod$ that contains $R\tens S$. This fact is mentioned in 4.1 of \cite{DGI}, which refers to Theorem 5.1.5 of Hirschorn \cite{hirschhorn} for a proof. The reader should mind that, unfortunately, the terminology of these two sources differ. In \cite{hirschhorn}, the term {\it cellular} actually corresponds to the notion of small/finitely built presented in \cite{DGI} that we have recalled above.
\end{proof}

\subsection{The double dual of an external module}

For any left $C$-module $X$ there is a map
$$
\phi_X: X\rightarrow \hom_{K(C)}(\hom_C(X,\bar R),\bar R)
$$
which is the unit of the adjunction
$$
\hom_C(-,\bar R): \CMod \leftrightarrows \Mod_{_{K(C)}}^{op} : \hom_{K(C)}(-,\bar R)
$$
where $\bar R$ denotes the bar construction $B(C,C,R)$. Let us emphasize that $\hom_C(X,\bar R)$ is a {\it right} $K(C)$-module this time. Note that if we considered a right $C$-module instead, the source and target would look the same, although of course the hom-objects would be different (left hom instead of right hom and vice-versa).

We wish to generalize the observation made in the proof of Proposition 4.20 of \cite{DGI}, that for all $C$-modules finitely built from $R$ this map is an equivalence. First, let us look at the case $X=\bar R\tens S= \bigvee_{i\in S} \bar R(-,i)$. Then
\begin{align*}
    \hom_{K(C)}(\hom_C(X,\bar R),\bar R) &\cong \hom_{K(C)}(\Pi_j\hom_C(\bar R(-,j), \bar R), \bar R) \\
    &= \bigvee_i \Nat_{K(C)^{op}}(\Pi_j\Nat_{C^{op}}(\bar R(-,j), \bar R(-,i)), \bar R(-,i)) \\
    &\cong \bigvee_i \bar R(-,i)
\end{align*}
and one can check that the $k$-th component of $\phi_{R\tens S}$ is simply the natural inclusion of the $k$-th summand, which means the map from $R\tens S$ to its double dual is, up to isomorphism, just $\id_{R\tens S}$.

\begin{proposition}
The map $\phi_X$ is a weak equivalence for all $C$-modules finitely built from $R\tens S$. That is, for all $X$ that belong to the smallest thick category (i.e. closed under equivalences, cofiber/fiber sequences and retracts) that contains $R\tens S$.
\end{proposition}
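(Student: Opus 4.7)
The plan is to show that the class $\mathcal{E}$ of left $C$-modules $X$ for which $\phi_X$ is a weak equivalence is a thick subcategory of $\CMod$ containing $R\tens S$; by minimality, this will contain every $X$ built finitely from $R\tens S$. The generating case $X=R\tens S$ has essentially been verified in the paragraph immediately preceding the statement: the calculation there identifies $\phi_{R\tens S}$ with the canonical isomorphism of $\bigvee_{i\in S}\bar R(-,i)$ with itself, so $R\tens S\in\mathcal{E}$.

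Next I would check the easy closure properties. Closure under weak equivalences is automatic from naturality of $\phi$ and two-out-of-three. Closure under retracts is automatic since $\phi$ is natural and retracts of equivalences are equivalences. The main work is closure under (homotopy) cofiber sequences. Since we are in a stable setting (chain complexes or spectra), cofiber and fiber sequences agree, and we may argue as follows: given a cofiber sequence $X\to Y\to Z$ of $C$-modules, the contravariant derived functor $\hom_C(-,\bar R)$ sends it to a fiber sequence
\[
\hom_C(Z,\bar R)\to \hom_C(Y,\bar R)\to \hom_C(X,\bar R)
\]
of right $K(C)$-modules, and applying the contravariant derived functor $\hom_{K(C)}(-,\bar R)$ produces another fiber (equivalently, cofiber) sequence. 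The natural transformation $\phi$ then gives a morphism of cofiber sequences from $X\to Y\to Z$ to its double dual. Assuming $\phi_X,\phi_Y\in\mathcal{E}$, two-out-of-three in the triangulated homotopy category forces $\phi_Z$ to be an equivalence as well.

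The step that requires the most care is the assertion that $\hom_C(-,\bar R)$ and $\hom_{K(C)}(-,\bar R)$ preserve homotopy (co)fiber sequences. This is where we use that $\bar R=B(C,C,R)$ is cofibrant (so it corepresents a derived functor in the second variable), and that the hom-objects in this section are understood as derived, computed after cofibrant-fibrant replacement of the first variable. Concretely, for a cofiber sequence of $C$-modules one may replace it (up to weak equivalence) by a cofibration of cofibrant objects with cofibrant quotient, and then $\hom_C(-,\bar R)$ produces an honest fibration of fibrant objects with the correct fiber. The analogous argument applies to $\hom_{K(C)}(-,\bar R)$. Once this preservation is in hand, the thick-subcategory argument sketched above closes out the proof.
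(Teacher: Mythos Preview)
Your proposal is correct and follows essentially the same approach as the paper: show that the class of $X$ for which $\phi_X$ is an equivalence is thick and contains $R\tens S$. The paper's proof is terser (it simply asserts closure under cofiber/fiber sequences ``because $\CMod$ is stable'' and notes afterward that both $\hom_C(-,\bar R)$ and $\hom_{K(C)}(-,\bar R)$ preserve equivalences, cofiber/fiber sequences, and retracts), whereas you spell out more carefully why the derived hom functors take cofiber sequences to fiber sequences and how the morphism-of-triangles argument then applies; one minor slip is writing ``$\phi_X,\phi_Y\in\mathcal{E}$'' where you mean $X,Y\in\mathcal{E}$.
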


\begin{proof}
The category of left $C$-modules $X$ such that $\phi_X$ is a weak equivalence is thick. It is closed under weak equivalences because the hom objects are derived and because of the 2-out-of-3 axiom, closed under cofiber/fiber sequences because $\CMod$ is stable, and closed under retracts because weak equivalences are closed under retracts.
\end{proof}

More concisely, this follows from the fact that the two functors $\hom_C(-,\bar R)$ and $\hom_{K(C)}(-,\bar R)$ preserve weak equivalences, cofiber/fiber sequences and retractions, which also yields the following result.

\begin{corollary}\label{prop:DoubleDualExternalModule}
Let us denote by $\Thick(X)$ the smallest thick subcategory generated by an object $X$ (be it a $C$-module or a $K(C)$-module). There is a Quillen equivalence
$$
\hom_C(-,\bar R): \Thick(R\tens S) \leftrightarrows \Thick(S\tens K(C))^{op} : \hom_{K(C)}(-,\bar R)
$$
between the category of left $C$-modules finitely built from $R\tens S$ and the opposite of the category of right $K(C)$-modules finitely built from $S\tens K(C)$.
\end{corollary}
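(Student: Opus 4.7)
The plan is to derive the corollary from the preceding proposition by a standard adjunction-plus-thick-closure argument, which I would carry out in three steps.

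\textbf{Step 1} (the functors restrict to the stated subcategories). The key computation is on generators. By Yoneda together with the identification $K(C)(i,j) \simeq \hom_C(\bar R, \bar R)(i,j)$, one obtains $\hom_C(R(-,i), \bar R) \simeq K(C)(-,i)$ as right $K(C)$-modules, and hence $\hom_C(R\tens S, \bar R) \simeq S\tens K(C)$. The symmetric identification $\hom_{K(C)}(S\tens K(C), \bar R) \simeq R\tens S$ is obtained by the same argument with the roles of $C$ and $K(C)$ swapped. Since, as noted in the paragraph just before the corollary, both functors preserve weak equivalences, (co)fiber sequences and retractions, they carry thick subcategories to thick subcategories, and in particular restrict to functors $\Thick(R\tens S) \leftrightarrows \Thick(S\tens K(C))^{\op}$.

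\textbf{Step 2} (the unit is an equivalence). This is precisely the preceding proposition: $\phi_X: X \to \hom_{K(C)}(\hom_C(X,\bar R),\bar R)$ is a weak equivalence for every $X \in \Thick(R\tens S)$.

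\textbf{Step 3} (the counit is an equivalence). By a symmetric argument, the map $\psi_Y: Y \to \hom_C(\hom_{K(C)}(Y,\bar R),\bar R)$ is a weak equivalence for every $Y \in \Thick(S\tens K(C))$; viewed in the opposite category this is the counit of the adjunction. The proof is word-for-word the same as that of the preceding proposition: the base case $Y = S\tens K(C)$ is the analog of the computation $\phi_{R\tens S} \cong \id$ done just before it, and the same thick-subcategory argument then propagates. Combining the three steps yields the desired (derived) adjoint equivalence.

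The main obstacle is Step 3, since $K(C)$ is not a strictly augmented algebra over $R$ in the way that $C$ is (as stressed in the introduction, it is only augmented up to homotopy). One therefore has to check that the direct computation of $\psi_{S\tens K(C)}$ still reduces to the identity up to canonical equivalence; this is indeed the case, because the argument only uses the adjunction, a Yoneda-type evaluation on representables, and the defining property of $K(C)$ as derived endomorphisms of $\bar R$, none of which require strict augmentedness.
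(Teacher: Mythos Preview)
Your proposal is correct and matches the paper's approach exactly. The paper's own justification is the single sentence preceding the corollary (``this follows from the fact that the two functors $\hom_C(-,\bar R)$ and $\hom_{K(C)}(-,\bar R)$ preserve weak equivalences, cofiber/fiber sequences and retractions''), and your three steps are precisely the unpacking of that sentence: restriction via thick-closure, the unit equivalence from the preceding proposition, and the symmetric counit argument.

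One small point of care in Step~1: the identification $\hom_C(R\tens S,\bar R)\simeq S\tens K(C)$ is not literally correct when $S$ is infinite, since $\hom_C(-,\bar R)$ turns the coproduct $\bigvee_i R(-,i)$ into a product $\prod_i K(C)(i,-)$. The clean way to run the restriction argument is on the individual representables $R(-,i)\mapsto K(C)(i,-)$ (each of which is a retract of the respective generator) and then propagate by thick-closure; the paper's own computation of $\phi_{\bar R\tens S}$ just above handles the generator directly without needing this identification.
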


\section{Double duality of modules and categories}

In this section, we focus on two basic types of $R$-algebras: free algebras and square zero extensions. We first explain why the natural algebra map from a square zero extension into its double dual is a weak equivalence. Then, we show that the Koszul dual of a square zero extension is weak equivalent, as an algebra, to a free algebra. In addition, any free algebra is equivalent to the Koszul dual of a square zero extension. As before, we tacitly assume that $R$ is levelwise cofibrant and that all algebras considered are cofibrant as $R$-bimodules. 

\subsection{The double dual of a square zero extension}

\begin{definition}
An $R$-algebra is said to be {\it finitely generated free} if it is of the form $\F(M)$ where $\F$ denotes the free $R$-algebra functor from section 2.2, and $M$ is finitely built from $R$ as an $R$-bimodule. The dual notion is that of a {\it trivial} $R$-algebra, also called a {\it square zero extension}, consisting of a coproduct $R\vee M$, which is said to be finite if $M$ is finitely built from $R$ in $_R\Mod_R$. The unit map is given by the canonical inclusion map, and the multiplication map puts together the algebra structure on $R$, the left and right $R$-module structures on $M$, and the trivial map $M\circ M \rightarrow * \rightarrow R$. In classical algebra, it is analogous to taking the quotient of the algebra of polynomials $R[M]$ by the ideal of elements of degree greater than or equal to $2$. Note that both of these kinds of algebra come naturally equipped with an augmentation map.
\end{definition}

The discussion on the double dual of an external module from section 4 is also valid for internal modules, namely we get a result analogous to Proposition \ref{prop:DoubleDualExternalModule}.


\begin{proposition}\label{prop:DoubleDualInternalModule}
There is a Quillen equivalence
$$
\R\Hom_C(-,\bar R): \Thick(R) \leftrightarrows \Thick(K(C))^{op}: \R\Hom_{K(C)}(-,\bar R)
$$
between the category of internal right $C$-modules finitely built from $R$ and the opposite of the category of left $K(C)$-modules finitely built from $K(C)$.
\end{proposition}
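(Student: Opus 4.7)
The plan is to mimic the argument that established Proposition \ref{prop:DoubleDualExternalModule}, since the formal structure is identical: we have a contravariant adjoint pair of derived hom functors, a canonical double-duality unit $\phi_X$, and a stable model category context in which thick subcategories are detected by a single generator.

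First I would set up the unit of the adjunction. For any internal right $C$-module $X$, evaluation produces a natural map
\[
\phi_X : X \longrightarrow \R\Hom_{K(C)}\!\big(\R\Hom_C(X,\bar R),\,\bar R\big),
\]
where $\bar R$ stands for the cofibrant-fibrant approximation $B(R,C,C)$ (viewed on both sides via its bimodule structure over $C$ on the right and $K(C)$ on the left, exactly as in Section 1.3). The proposition is then equivalent to the statement that $\phi_X$ is a weak equivalence whenever $X\in\Thick(R)$, together with the dual statement that the corresponding unit $\psi_Y$ is a weak equivalence for all $Y\in\Thick(K(C))$.

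The base case is $X = R$. Here $\R\Hom_C(R,\bar R)\simeq \hom_C(\bar R,\bar R) = K(C)$ by Definition \ref{def:KoszulDual}, and then $\R\Hom_{K(C)}(K(C),\bar R) \simeq \bar R \simeq R$ as $R$-bimodules, and one checks directly (as in the computation for $R\tens S$ just above) that $\phi_R$ is, up to the two identifications above, the identity map. For the other side, the analogous computation shows $\psi_{K(C)}$ is a weak equivalence. From here, the thick-subcategory argument from Section 4.3 applies verbatim: both functors $\R\Hom_C(-,\bar R)$ and $\R\Hom_{K(C)}(-,\bar R)$ preserve weak equivalences by derivedness, send cofiber sequences to fiber sequences (and vice versa) by stability of the module categories $\Mod_C$ and $_{K(C)}\Mod$, and preserve retracts; hence the class of objects on which the unit is a weak equivalence is a thick subcategory, and it contains the generator.

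Finally, to upgrade this to a Quillen equivalence on the nose, I would note that the contravariant adjoint pair
\[
\R\Hom_C(-,\bar R) : \Mod_C \leftrightarrows {{_{K(C)}\Mod}}^{op} : \R\Hom_{K(C)}(-,\bar R)
\]
restricts to the indicated thick subcategories because each functor visibly sends the generator of one side into the thick subcategory generated by the other (since $R\mapsto K(C)$ and $K(C)\mapsto \bar R \simeq R$ up to weak equivalence), and this property is preserved by cofiber sequences and retracts. The main obstacle I anticipate is purely bookkeeping: one must keep track of which variance of the hom bifunctor ($\homl$ versus $\homr$) appears at each step, since the Koszul dual's module theory mixes left and right structures, and one must confirm that the model-theoretic hypotheses of Section 1.2 (cofibrancy of $R$, $R$-cofibrancy of $C$, and the fibrancy assumption) make $\bar R$ simultaneously a cofibrant-fibrant object in both $\Mod_C$ and $_{K(C)}\Mod$, so that the derived homs are computed by genuine $\hom$-objects with no further replacement.
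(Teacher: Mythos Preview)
Your proposal is correct and takes essentially the same approach as the paper. In fact the paper gives no separate proof for this proposition at all: it simply says that ``the discussion on the double dual of an external module from section 4 is also valid for internal modules,'' and states the result as analogous to Proposition \ref{prop:DoubleDualExternalModule}. Your write-up spells out exactly what that analogy amounts to (unit $\phi_X$, base case $X=R$ giving $K(C)$ and back, thick-subcategory closure), and so is more detailed than the paper itself.
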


The reason we are interested in this internal modules version of the statement is that we can apply it to $C$ as a $C$-module in particular. Now, in general one cannot expect $C$ to be finitely built from $R$ as a $C$-module, as that is quite a strong condition and implies that the multiplicative structure on $C$ is not very interesting. However, a finite square zero extension $R\vee M$ is always finitely built from $R$ as an $R\vee M$-module, because of the simple observation that it is the coproduct of $R$ and $M$ as $R\vee M$-modules.

\begin{corollary}
For $R\vee M$ a finite square zero extension, the natural map
$$
R\vee M\rightarrow K(K(R\vee M))
$$
is a weak equivalence of $R$-algebras.
\end{corollary}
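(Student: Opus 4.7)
The plan is to deduce the corollary from Proposition \ref{prop:DoubleDualInternalModule} applied to $C = R \vee M$. Since the model structure on $R$-algebras is transferred from that on $R$-bimodules, it suffices to show that the underlying $R$-bimodule map is a weak equivalence, provided we can verify it coincides with the algebra map $\nu$ from section 1.

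First I would check that $C$, regarded as a right $C$-module, lies in $\Thick(R)$. The square zero condition forces the multiplication $M \circ_R M \to M$ to vanish, so the $R$-bimodule splitting $C = R \vee M$ is in fact a splitting of right $C$-modules: $R$ carries the right $C$-action via the augmentation $\epsilon \colon C \to R$, and on $M$ the $M$-summand of $C$ acts trivially. It therefore suffices to show that $M$ lies in $\Thick(R)$ as a right $C$-module, which follows from the hypothesis that $M$ is finitely built from $R$ in ${_R\Mod_R}$ together with the observation that restriction of scalars along $\epsilon$ preserves cofiber sequences, retracts, and weak equivalences, and sends $R$ to $R$.

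Applying Proposition \ref{prop:DoubleDualInternalModule} to $C \in \Thick(R)$ then yields a weak equivalence of right $C$-modules
$$
\eta_C \colon C \xrightarrow{\sim} \R\Hom_{K(C)}\bigl(\R\Hom_C(C,\bar R),\bar R\bigr),
$$
the unit of the Quillen equivalence. I would identify the target with $K(K(C))$ by noting that $\R\Hom_C(C,\bar R) \simeq \bar R$ (since $C$ is its own cofibrant approximation as a right $C$-module), and that $\bar R = B(R,C,C)$ then plays the role of a cofibrant--fibrant approximation of $R$ as a left $K(C)$-module, so the right-hand side unwinds, after a further left $K(C)$-cofibrant replacement, to $\homl_{K(C)}(\tilde{\bar R},\tilde{\bar R}) = K(K(C))$ by definition.

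The step I expect to be the main obstacle is the identification of $\eta_C$ with the natural algebra double-dual map $\nu \colon C \to K(K(C))$ from section 1. Both maps are constructed as the adjoint of the evaluation $\bar R \circ_R C \to \bar R$ encoding the right $C$-module structure on $\bar R$: $\nu$ factors as $C \to \homl_{K(C)}(\bar R,\bar R) \to K(K(C))$, and $\eta_C$ is the unit of the corresponding hom--tensor adjunction, given by the same formula. Carrying out this comparison carefully, tracking the distinction between $\homl$ and $\homr$ and the $(K(C),C)$-bimodule structure on $\bar R$, and invoking Proposition \ref{prop:TylerLawsonThing} so that different choices of cofibrant--fibrant approximation yield weakly equivalent algebra double duals, would complete the proof.
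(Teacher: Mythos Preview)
Your proposal is correct and follows essentially the same approach as the paper: show $C=R\vee M$ lies in $\Thick(R)$ as a right $C$-module and invoke Proposition \ref{prop:DoubleDualInternalModule}. You are more explicit than the paper about two points it leaves implicit---the restriction-of-scalars argument showing $M\in\Thick(R)$ as a $C$-module, and the identification of the unit $\eta_C$ with the algebra map $\nu$---but these are genuine details worth spelling out, not deviations from the strategy.
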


\begin{proof}
Let $C=R\vee M$. Since $C$ is cofibrant and finitely built from $R$ as a right $C$-module, and since we are using the category of EKMM spectra (in which all objects are fibrant) as the ground category $\V$, the unit map
$$
C \rightarrow \R\Hom_{K(C)}(\R\Hom_C(C,\bar R), \bar R) \cong \R\Hom_{K(C)}(\bar R,
\bar R)
$$
is a weak equivalence by Proposition \ref{prop:DoubleDualInternalModule}.
\end{proof}

We now want to show that this is true as well for finitely generated free $R$-algebras. As mentioned above, this is the natural dual notion, so one might expect that finite square zero extensions and finitely generated free algebras should be Koszul dual to one another. We will show this and use it to deduce that the natural map into the double dual is a weak equivalence of $R$-algebras in this case too.

\subsection{The dual of a square zero extension is free}

Throughout this discussion $C=R\vee M$ is a square zero extension, with $M$ an $R$-bimodule finitely built from $R$. We assume that $M$ is strictly directed and levelwise finite, where by {\it strictly directed} we mean either $M(i,j)= *$ for all $i\le j$ or $M(i,j)= *$ for all $i \ge j$. We also assume that $R$ is diagonal ($R(i,j)=*$ for all $i\neq j$).



\begin{proposition}\label{Prop:SquareZeroBarDescription}
The geometric realization $B(R,C,C)$ of the simplicial bar construction associated to $(R,C,C)$ is isomorphic to the coproduct
$$
R\vee (I\Smash M) \vee (S^1 \Smash I \Smash M^{\circr 2}) \vee \cdots \vee (({S^1})^{\Smash n-1} \Smash I \Smash M^{\circr n}) \vee \cdots
$$
where $I$ denotes the interval $[0,1]$ with $0$ as base point.
\end{proposition}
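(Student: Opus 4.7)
The strategy is to decompose the simplicial bar construction levelwise using $C = R\vee M$, and analyze each summand according to the number of $M$-factors appearing. Since $\circ_R$ distributes over wedges (as $_R\Mod_R$ is biclosed) and $R$ is the unit for $\circ_R$, one has
\[
B_n(R,C,C) \;=\; C^{\circ_R(n+1)} \;=\; \bigvee_{k=0}^{n+1}\bigvee_{\substack{S\subseteq\{1,\ldots,n+1\}\\|S|=k}} M^{\circ_R k},
\]
where the summand indexed by $S$ corresponds to placing $M$'s at the positions in $S$ and $R$'s elsewhere. A direct inspection of the simplicial operators shows that they preserve the cardinality of $S$, modulo two mechanisms that send a summand to the basepoint: the face $d_0$ kills any summand with $1 \in S$ (via the augmentation $\epsilon\colon C\to R$, which vanishes on $M$), and the face $d_j$ for $1\le j\le n$ kills any summand with $\{j,j+1\}\subseteq S$ (via the square-zero multiplication $M\circ_R M\to *$). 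On the surviving summands, the face operators reduce to the canonical unit isomorphisms $R\circ_R M\cong M\cong M\circ_R R$. Consequently the bar construction splits as a wedge $B_\bullet(R,C,C) = \bigvee_{k\ge 0} B^{(k)}_\bullet$ of simplicial sub-bimodules, and taking geometric realization gives $B(R,C,C) = \bigvee_{k\ge 0}|B^{(k)}_\bullet|$.

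Next, I would write $B^{(k)}_\bullet \cong X^{(k)}_\bullet \Smash M^{\circ_R k}$, where $X^{(k)}_\bullet$ is the pointed simplicial set whose non-basepoint $n$-simplices are the $k$-element subsets of $\{1,\ldots,n+1\}$, with face and degeneracy operators inherited from those of $B_\bullet$. Compatibility of geometric realization with the external smash product (Remark \ref{remark:GeomReal}) yields $|B^{(k)}_\bullet| = |X^{(k)}_\bullet|\Smash M^{\circ_R k}$. For $k=0$ the simplicial set $X^{(0)}_\bullet$ is constant at $S^0$, so $|B^{(0)}_\bullet|= R$.

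The key computation is therefore $|X^{(k)}_\bullet|$ for $k\ge 1$. A tuple $(i_1<\cdots<i_k)$ in $X^{(k)}_n$ is degenerate iff it admits a gap at some position $p\in\{1,\ldots,n\}$, because $s_l$ inserts an empty slot at position $l+1$ for $l\in\{0,\ldots,n-1\}$. Hence non-degeneracy is equivalent to $\{1,\ldots,n\}\subseteq\{i_1,\ldots,i_k\}$, which forces $n\in\{k-1,k\}$ and the tuple to equal $(1,2,\ldots,k)$ in both cases. The unique non-degenerate $(k-1)$-simplex has all faces sent to the basepoint (by augmentation on $d_0$ and by square zero on $d_j$ for $1\le j\le k-1$), contributing an $S^{k-1}$ to the realization. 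The unique non-degenerate $k$-simplex has $d_0,\ldots,d_{k-1}$ all sent to the basepoint by the same reasoning, whereas $d_k$ combines the $M$ at position $k$ with the $R$ at position $k+1$ via $M\circ_R R\cong M$, yielding the $(k-1)$-simplex. This CW structure -- one $(k-1)$-cell attached trivially to the basepoint plus one $k$-cell whose boundary collapses to the basepoint except on a single face that maps identically onto the $(k-1)$-cell -- matches the standard product CW structure of $I\Smash S^{k-1} = (S^1)^{\Smash(k-1)}\Smash I$, giving $|X^{(k)}_\bullet|\cong (S^1)^{\Smash(k-1)}\Smash I$. Assembling the wedge over $k$ then produces the stated isomorphism.

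The main technical point I anticipate is upgrading the evident cellular match to an actual isomorphism (rather than a homotopy equivalence). Since both sides are CW complexes with one cell in each of dimensions $k-1$ and $k$ and identical attaching data, this reduces to an explicit combinatorial comparison with the simplicial model of $(S^1)^{\Smash(k-1)}\Smash I$ assembled from $\Delta[1]/\partial\Delta[1]$ and $\Delta[1]/\{0\}$; the comparison map is forced by the condition ``one face of the $k$-cell maps identically to the $(k-1)$-cell, all others map to the basepoint.'' The hypotheses that $R$ be diagonal, $M$ strictly directed, and $M$ levelwise finite enter only to guarantee that the displayed wedge is a finite coproduct at each vertex pair, so that no issues with infinite coproducts arise.
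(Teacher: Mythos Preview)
Your argument is correct and follows essentially the same line as the paper: decompose the simplicial bar construction by weight (number of $M$-factors), observe that the weight-$k$ piece has the form $X^{(k)}_\bullet \Smash M^{\circr k}$ for a pointed simplicial set with exactly one non-degenerate $(k-1)$-simplex $\beta$ and one non-degenerate $k$-simplex $\alpha$ satisfying $d_k(\alpha)=\beta$ and $d_i(\alpha)=*$ otherwise, and then identify $|X^{(k)}_\bullet|$. The only real difference lies in that last identification. The paper recognizes $X^{(k)}_\bullet$ explicitly as an iterated pointed join $S^0\star\cdots\star S^0\star\Delta^1$ (with $k-1$ copies of $S^0$) and then uses that geometric realization takes pointed joins to pointed joins, together with $|S^0\star Y|\cong S^1\Smash |Y|$, to obtain $(S^1)^{\Smash(k-1)}\Smash I$ directly as a homeomorphism. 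Your route via matching CW structures is more hands-on but, as you yourself note, needs the small extra step of promoting the cellular match to an honest homeomorphism; the join description sidesteps that.

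One small correction to your closing remark: the hypotheses that $R$ be diagonal and $M$ strictly directed and levelwise finite are not actually needed for this proposition. The weight splitting and the computation of each summand go through for an arbitrary $R$-bimodule $M$, since $\circr$ distributes over arbitrary wedges and geometric realization commutes with $-\Smash M^{\circr k}$. Those hypotheses are imposed in the surrounding section for the subsequent duality arguments (where one needs the wedge to be levelwise finite), not for this description of $B(R,C,C)$ itself.
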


\begin{proof}
The first thing to notice is that since the simplices of $B(R,C,C)_\bullet$ are chains of composable elements of $R$ and $M$, we can introduce a weight grading, elements of $R$ having weight $0$ and elements of $M$ having weight $1$. With this in mind, the weight $n$ component $B_{(n)}(R,C,C)_\bullet$ is defined as follows. For $k$ a natural number (the simplicial index), $B_{(n)}(R,C,C)_k$ is the coproduct of all $R\circr X_1 \circr \cdots \circr X_{k+1}$, where among $X_1,\ldots, X_{k+1}$ there are $n$ copies of $M$ and the other $X_i$'s are copies of $R$.
Because the faces and degeneracies of the bar construction preserve the weight grading (the basepoint being in every weight degree), this actually defines a "sub-simplicial object" and we can write
$$
B(R,C,C)_\bullet = \bigvee_{n\ge 0} B_{(n)}(R,C,C)_\bullet
$$

It is then enough to show that the geometric realization $B_{(n)}(R,C,C)$ of the $n$-th weight degree is isomorphic to $({S^1})^{\Smash n-1} \Smash I \Smash M^{\circr n}$.

First, recall the definition of the degeneracy maps of the simplicial bar construction: they consist in inserting a copy of $R$ between the leftmost $R$ and the rightmost $C$ of $B(R,C,C)_m=R\circr C \circr \cdots \circr C$. This means that the only non-degenerate simplices of $B_{(n)}(R,C,C)$ lie in the $M^{\circr n}$ component in simplicial degree $n$ and the \linebreak $M^{\circr n}\circr R$ component in simplicial degree $n+1$. From this we can see that $B_{(n)}(R,C,C)$ is isomorphic to the geometric realization of $X_\bullet \Smash M^{\circr n}$ where $X_\bullet$ is the based simplicial set with one non-degenerate $n$-simplex $\beta_n$, one non-degenerate $n+1$-simplex $\alpha_n$, and face maps $d_i: X_{n+1} \rightarrow X_n$ satisfying $d_{n+1}(\alpha_n)=\beta_n$ and $d_i(\alpha_n)=*$ for $i\neq n+1$.

We can write $X_\bullet$ as the join of simplicial sets $S^0 \star \cdots \star S^0 \star \Delta^1$, where by join we mean that $Y \star Z$ is the pushout
$$
\xymatrix{
& Y \smash Z \smash 1 \ar[d] \ar[r]& Z \ar[dd]\\
Y \smash Z \smash 0 \ar[r] \ar[d] & Y \smash Z \smash \Delta^1 & \\
Y \ar[rr] & & Y \star Z
}
$$
This definition is a pointed version of the one presented in Section 4.2.1 of \cite{lurieHigherTopos} that is due to Joyal. As mentioned in Proposition 4.2.1.2 of this source, it is equivalent to the more traditional definition of join of simplicial sets that involves the Day convolution product of augmented simplicial sets. By applying the geometric realization functor to this diagram, we see that the realization of the join is the (topological) join of the realizations. In particular, we get a natural homeomorphism $\vert S^0 \star -\vert \cong S^1 \smash \vert-\vert$ and therefore  the geometric realization of $X_\bullet$ can be written as $(S^1)^{\smash n-1}\smash I$, as desired. 
\vspace{5mm}
\end{proof}

Under this description, the right action of $C=R\vee M$ is given by the canonical right action of $R$ on each weight component and an action of $M$ seen as a trivial monoid without unit. Given a weight component $X_\bullet \Smash M^{\circr n}$ of the simplicial bar construction $B(R,C,C)_\bullet$ (see the proof of Proposition \ref{Prop:SquareZeroBarDescription} above), this $M$-action is given by combining the concatenation $M^{\circr n}\circr M\rightarrow M^{\circr n+1}$ and the map
$$
(S^0)^{\star n-1}\star \Delta^1 \rightarrow (S^0)^{\star n} \rightarrow (S^0)^{\star n} \star \Delta^1
$$
obtained by composing the natural quotient and inclusion maps.

\begin{proposition}
The geometric realization $B(R,C,R)$ of the simplicial bar construction associated to $(R,C,R)$ can be described as a coproduct
$$
\bigvee_{n\ge 0} (\Sigma M)^n
$$
where $\Sigma M$ is the levelwise suspension of $M$.
\end{proposition}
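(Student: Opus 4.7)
The plan is to follow the weight-grading strategy used for $B(R,C,C)$ in Proposition~\ref{Prop:SquareZeroBarDescription}, exploiting the extra collapsing produced by the right $R$-coefficient (rather than $C$). First I would declare that a tensor factor equal to $R$ has weight $0$ and a factor equal to $M$ has weight $1$, giving a weight grading on each simplicial level $B_k(R,C,R)=(R\vee M)^{\circ_R k}$. Then I would check that this grading is respected by the simplicial structure: the degeneracies insert a copy of $R$ and are weight-preserving; the interior faces $d_i$ ($0<i<k$) use the multiplication of $C=R\vee M$, which preserves weight when at least one factor is $R$ and sends the piece $M\circ_R M$ to the basepoint because $C$ is a square zero extension; the extreme faces $d_0$ and $d_k$ apply the augmentation $\epsilon:C\to R$, which is the identity on $R$ and kills $M$. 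This produces a splitting $B_\bullet(R,C,R)=\bigvee_{m\ge 0}B^{(m)}_\bullet(R,C,R)$ of simplicial $R$-bimodules.

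Next I would identify each weight summand. In simplicial degree $k$, the weight-$m$ part $B^{(m)}_k(R,C,R)$ is the wedge of $\binom{k}{m}$ copies of $M^{\circ_R m}$, one for each length-$k$ word in $\{R,M\}$ with exactly $m$ letters equal to $M$ (the $R$'s being absorbed via $R\circ_R-\cong \mathrm{id}\cong -\circ_R R$). The unique non-degenerate simplex is the word $MM\cdots M$ living in degree $k=m$, and a short check shows that all its faces land in the basepoint: the extreme faces because $\epsilon(M)=0$, and each interior face because the multiplication $M\circ_R M\to C$ is zero. Consequently, as a pointed simplicial object in $R$-bimodules, $B^{(m)}_\bullet(R,C,R)$ is isomorphic to $(\Delta^m/\partial\Delta^m)\smash M^{\circ_R m}$, with the simplicial set $\Delta^m/\partial\Delta^m$ acting as a coefficient on the constant object $M^{\circ_R m}$.

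Passing to geometric realization and using $|\Delta^m/\partial\Delta^m|\cong S^m$, I would obtain $|B^{(m)}_\bullet(R,C,R)|\cong S^m\smash M^{\circ_R m}$. Since $S^1$ lies in the base category $\V$, smashing with $S^1$ distributes over $\circ_R$, so
\[
S^m\smash M^{\circ_R m}\ \cong\ (S^1\smash M)^{\circ_R m}\ \cong\ (\Sigma M)^{\circ_R m}.
\]
Because geometric realization commutes with coproducts, summing over $m$ then yields $B(R,C,R)\cong \bigvee_{m\ge 0}(\Sigma M)^m$, as desired.

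The main obstacle I anticipate is purely bookkeeping: confirming that the weight grading really does survive at the simplicial level (so that the wedge decomposition of $B_\bullet(R,C,R)$ is preserved by both faces and degeneracies), and that the weight-$m$ subobject admits the simplicial model $(\Delta^m/\partial\Delta^m)\smash M^{\circ_R m}$ with a single non-degenerate simplex whose faces all collapse to the basepoint. Once these combinatorial checks are in place, the passage to geometric realization and the smash-$\circ_R$ distributivity are formally identical to the arguments already employed in Proposition~\ref{Prop:SquareZeroBarDescription}.
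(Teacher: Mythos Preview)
Your proposal is correct and follows essentially the same approach as the paper: the paper's proof is a one-line reference back to Proposition~\ref{Prop:SquareZeroBarDescription}, noting only that in the two-sided case \emph{all} face maps of the unique non-degenerate simplex hit the basepoint, which is precisely the computation you spell out. Your identification of the weight-$m$ piece with $(\Delta^m/\partial\Delta^m)\smash M^{\circ_R m}$ and the distributivity step $S^m\smash M^{\circ_R m}\cong(\Sigma M)^{\circ_R m}$ are exactly the details the paper leaves implicit.
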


\begin{proof}
This is very analogous to the proof of Proposition \ref{Prop:SquareZeroBarDescription}, with the difference that in this case {\it all} face maps $d_i$ send $\alpha_n$ to the basepoint.
\end{proof}

We will build a weak equivalence of algebras $\vp: \F(D(\Sigma M)) \rightarrow K(C)$ where $D(-) = \hom_R(-, R)$ is the dual functor for right $R$-modules. To get a map of algebras, we just need a map of $R$-bimodules $D(\Sigma M) \rightarrow K(C)$ or, in other words, an evaluation map $D(\Sigma M) \circr B(R,C,C) \rightarrow B(R,C,C)$ that is a map of $(R,C)$-bimodules. We can use the description obtained above to make this definition easier. First we do this for weight 0, by considering the trivial map
$$
D(\Sigma M) \circr R \rightarrow * \rightarrow B(R,C,C)
$$
Then, in weight 1, we have
$$
D(\Sigma M) \circr (I \Smash M) \rightarrow D(\Sigma M) \circr \Sigma M \rightarrow R
$$
Finally, for weight $n\ge 2$,
$$
\xymatrix{
D(\Sigma M) \circr (({S^1})^{\Smash n-1} \Smash I \Smash M^{\circr n}) \ar[d] \\
D(\Sigma M) \circr \Sigma M \circr (({S^1})^{\Smash n-2} \Smash I \Smash M^{\circr n-1}) \ar[d] \\
({S^1})^{\Smash n-2} \Smash I \Smash M^{\circr n-1}
}
$$
gives the desired evaluation map. Notice that this evaluation map preserves weight (considering that elements of $D(\Sigma M)$ have weight -1).

We obtain a map of algebras $\vp: \F(D(\Sigma M)) \rightarrow K(C)$. We will show that the composite
$$
\F(D(\Sigma M)) \xrightarrow{\vp} K(C) \xrightarrow{\psi} \Hom_R(B(R,C,R),R)
$$
is a weak equivalence, and hence $\vp$ is one as well (here $\psi$ is the weak equivalence induced from the map $B(R,C,C)\rightarrow R$ in Proposition \ref{prop:DescriptionDual}). The first step is to show that it is the same as the canonical map
$$
\bigvee_{n\ge 0} D(\Sigma M)^n \rightarrow \prod_{n\ge 0} D((\Sigma M)^n)
$$
For $n,k$ two natural numbers, let us look at the component of $\varrho =\psi\vp$ from weight $n$ to $k$, which we will write $\varrho_{n,k}: D(\Sigma M)^n \rightarrow D((\Sigma M)^k)$. Since the augmentation $\epsilon: B(R,C,C) \rightarrow R$ sends all elements of non-zero weight to the basepoint, and the image of $D(\Sigma M)^n$ under $\vp$ consists of homomorphisms that decrease weight by $n$ and are trivial on components of weight less than $n$, we see that $\varrho_{n,k}$ is trivial if $n\neq k$.

Let us look at the map
$$
D(\Sigma M)^n \circ_R B(R,C,C) \rightarrow R
$$
that is adjoint to $\epsilon_* \vp$. From what we just said, we know the only non-trivial component of it is
$$
D(\Sigma M)^n \circ_R (S^1)^{n-1} \smash I \smash M^n \rightarrow R
$$
The adjoint map to $\varrho_{n,n}$ is then the factorization of this right $C$-module map through $D(\Sigma M)^m \circ_R (S^1)^{n}\smash M^n$, which by definition of $\vp$ corresponds to the canonical map consisting in successive applications of the evaluation map. 

We will now show that $\varrho_{n,n}$ is a weak equivalence for all $n$, by proving the more general result that for $X$ and $Y$ two directed $R$-bimodules finitely built from $R$, the map
$D(X) \circ_R D(Y) \rightarrow D(Y\circ_R X)$
is a weak equivalence. Recall we assume R is diagonal and we write $R_i$ for $R(i,i)$ (i.e. $R$ is just a tower of monoids in $\V$). Let us look at this map in level $(i,j)$. On one hand
$$
D(X) \circ_R D(Y) (i,j) = \bigvee_{k} \map_{R_i}(X(k,i), R_i) \smash_{R_k} \map_{R_k}(Y(j,k),R_k)
$$
and on the other hand
$$
D(Y\circ_R X)(i,j) = \map_{R_i}(\vee_k Y(j,k) \smash_{R_k} X(k,i), R_i)
$$
The wedge sums involved are finite, which means that
$$
\vee_k \map_{R_i}( Y(j,k) \smash_{R_k} X(k,i), R_i) \rightarrow D(Y\circ_R X)(i,j)
$$
is a weak equivalence, and we are thus left with proving that
$$
\xymatrix{
\map_{R_i}(X(k,i), R_i) \smash_{R_k} \map_{R_k}(Y(j,k),R_k) \ar[d]\\
\map_{R_i}(Y(j,k) \smash_{R_k} X(k,i), R_i)
}
$$
is one as well. For $Y=R$ this is trivial; we can conclude by recalling that $Y(j,k)$ is finitely built from $R_k$ by assumption and that both of the functors involved in the source and target of the latter map preserve triangles (i.e. cofiber/fiber sequences), weak equivalences and retracts, yielding the following result.

\begin{proposition}
The map $\varrho_{n,n}$ is a weak equivalence for all $n$.
\end{proposition}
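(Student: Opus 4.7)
The plan is to induct on $n$, relying on the general claim that whenever $X$ and $Y$ are directed $R$-bimodules finitely built from $R$, the canonical map
\[
D(X) \circ_R D(Y) \longrightarrow D(Y \circ_R X)
\]
is a weak equivalence. Granting this claim, the inductive step applies it with $X = \Sigma M$ and $Y = (\Sigma M)^{\circ_R(n-1)}$; one checks that both directedness and finite-buildness from $R$ are preserved under $\circ_R$, so the inductive hypothesis is maintained. The base case $n = 0$ is the tautology $\varrho_{0,0} = \id_R$.

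To prove the general claim, I would proceed levelwise as the excerpt already sets up. Fix $(i,j) \in S \times S$; directedness and finite-buildness guarantee that only finitely many $k \in S$ contribute non-trivially, so both sides at level $(i,j)$ are finite wedges over $k$. Since in the stable setting finite wedges agree with finite products, one may pass the wedge outside the mapping spectra, reducing the problem to verifying that for each relevant $k$ the pointwise comparison
\[
\mu_k \colon \map_{R_i}(X(k,i), R_i) \wedge_{R_k} \map_{R_k}(Y(j,k), R_k) \longrightarrow \map_{R_i}(Y(j,k) \wedge_{R_k} X(k,i), R_i)
\]
is a weak equivalence. I would then argue this by a thick-subcategory induction on $Y(j,k)$, viewed as a left $R_k$-module: the base case $Y(j,k) = R_k$ is the identity on $\map_{R_i}(X(k,i), R_i)$; both the source and the target of $\mu_k$, regarded as functors of $Y(j,k)$, send cofiber sequences to fiber sequences (they are mapping spectra out of appropriate smash products), preserve weak equivalences (the $\map$ being derived), and are closed under retracts. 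Hence the class of $R_k$-modules for which $\mu_k$ is a weak equivalence is thick and contains $R_k$, so it contains every finitely built $R_k$-module, in particular $Y(j,k)$.

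The only non-conceptual obstacle I foresee is the finiteness bookkeeping: one must confirm that iterated $\circ_R$-products of $\Sigma M$ stay directed and finitely built from $R$, and that the levelwise pieces $Y(j,k)$ remain finitely built $R_k$-modules after such iteration. Both points reduce to routine consequences of the hypotheses on $M$, the diagonality of $R$, and the definition of $\circ_R$ as a coequalizer of a finite wedge once directedness is imposed. Everything else is a standard thick-subcategory argument in EKMM spectra.
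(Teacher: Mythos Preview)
Your proposal is correct and follows essentially the same route as the paper: prove the general claim that $D(X)\circ_R D(Y)\to D(Y\circ_R X)$ is a weak equivalence for directed, finitely built $X,Y$ by working levelwise, using directedness to reduce to finite wedges, and then running a thick-subcategory argument on $Y(j,k)$ with base case $Y=R$. The only difference is organizational: you make the induction on $n$ and the preservation of directedness and finite-buildness under $\circ_R$ explicit, whereas the paper leaves the iteration implicit.
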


Now let $i,j\in \N$ and consider the $(i,j)$-th level of $\varrho$ 
$$
\bigvee_{n\ge 0} D(\Sigma M)^n(i,j) \rightarrow \prod_{n\ge 0} D((\Sigma M)^n)(i,j)
$$
Since we assumed $M$ is directed and that $\vee_i M(i,i)=*$, there are only a finite number of non-trivial weight components in both its source and target, so the wedge and products are actually finite.

\begin{corollary}\label{Corollary:DualTrivialIsFree}
Let $M$ be an $R$-bimodule and $C= R\vee M$ the corresponding square zero extension. Then the map
$$
\varrho: \bbF (D(\Sigma M)) \xrightarrow{\varphi} K(C) \xrightarrow{\psi} \Hom_R(B(R,C,R),R)
$$
is a weak equivalence of $R$-bimodules, meaning $\vp$ is a weak equivalence of\linebreak $R$-algebras.
\end{corollary}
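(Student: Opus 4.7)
The plan is to harvest what the preceding paragraphs have already established. We have decomposed $\varrho$ into weight components $\varrho_{n,k}: D(\Sigma M)^n \to D((\Sigma M)^k)$, observed that $\varrho_{n,k}$ vanishes for $n \neq k$, and shown that each diagonal piece $\varrho_{n,n}$ is a weak equivalence. What remains is to assemble these observations into a statement about the total map, and then to transfer the conclusion from $\varrho$ to $\vp$.

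The first step is to fix a level $(i,j)$ and use the strict directedness of $M$ to bound the number of non-trivial weight components. Say $M(a,b) = *$ whenever $a \leq b$; a straightforward induction using the diagonality of $R$ then shows $M^{\circ_R n}(a,b) = *$ unless there is a strict chain $a > k_1 > \cdots > k_{n-1} > b$, which forces $n \leq a - b$. The levelwise suspension $\Sigma$ does not change the support, and applying $D = \hom_R(-,R)$ transposes the indexing (again because $R$ is diagonal), so both $D(\Sigma M)^n(i,j)$ and $D((\Sigma M)^n)(i,j)$ vanish unless $n \leq j - i$. Hence in level $(i,j)$ both the wedge
\[
\bigvee_{n \geq 0} D(\Sigma M)^n(i,j) \quad \text{and the product} \quad \prod_{n \geq 0} D((\Sigma M)^n)(i,j)
\]
are finite.

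Next, because $\V$ is the stable pointed model category of EKMM spectra, the canonical comparison map from a finite wedge to the corresponding finite product is a weak equivalence. Under this identification, $\varrho$ in level $(i,j)$ is identified with the ``diagonal'' map assembled from the weak equivalences $\varrho_{n,n}(i,j)$, and so is itself a weak equivalence. Consequently $\varrho$ is a levelwise, hence an honest, weak equivalence of $R$-bimodules.

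Finally, the factorization $\varrho = \psi \vp$ together with Proposition \ref{prop:DescriptionDual}, which supplies $\psi$ as a weak equivalence of $R$-bimodules, lets the two-out-of-three axiom conclude that $\vp$ is also a weak equivalence of $R$-bimodules. Since $\vp$ was constructed as an $R$-algebra map and the model structure on $R$-algebras is transferred from that on $R$-bimodules, $\vp$ is a weak equivalence of $R$-algebras. The only subtle point is the identification of finite wedges with finite products, but this is a standard feature of the stable pointed model categories in which we work; the rest of the argument is bookkeeping across the weight grading.
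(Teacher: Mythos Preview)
Your proposal is correct and follows essentially the same route as the paper: both use the strict directedness of $M$ to see that in each level $(i,j)$ only finitely many weight components survive, invoke the already-established facts that $\varrho_{n,k}$ is trivial for $n\neq k$ and $\varrho_{n,n}$ is a weak equivalence, and then conclude via two-out-of-three using the weak equivalence $\psi$ from Proposition~\ref{prop:DescriptionDual}. You are slightly more explicit than the paper in spelling out the stability step (finite wedge $\simeq$ finite product) and the passage from $R$-bimodule weak equivalence to $R$-algebra weak equivalence, but there is no genuine difference in approach.
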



\begin{remark}
Notice that this also shows every free $R$-algebra is the Koszul dual of some square zero extension, taking. Indeed, for any free algebra $\bbF (N)$, we can just take $M=\Omega D(N)$ and apply Corollary \ref{Corollary:DualTrivialIsFree}.
\end{remark}



\bibliography{Sources}{}

\newcommand{\etalchar}[1]{$^{#1}$}
\providecommand{\bysame}{\leavevmode\hbox to3em{\hrulefill}\thinspace}
\providecommand{\MR}{\relax\ifhmode\unskip\space\fi MR }
\providecommand{\MRhref}[2]{%
  \href{http://www.ams.org/mathscinet-getitem?mr=#1}{#2}
}
\providecommand{\href}[2]{#2}
\begin{thebibliography}{GJMS{\etalchar{+}}06}

\bibitem[AC11]{AroneChing}
Gregory Arone and Michael Ching, \emph{Operads and chain rules for the calculus
  of functors}, Soci{\'e}t{\'e} math{\'e}matique de France, 2011.

\bibitem[BB13]{batanin-berger}
Michael Batanin and Clemens Berger, \emph{Homotopy theory for algebras over
  polynomial monads}, arXiv preprint arXiv:1305.0086 (2013).

\bibitem[BH18]{BerglundHess}
Alexander Berglund and Kathryn Hess, \emph{Homotopical morita theory for
  corings}, Israel Journal of Mathematics \textbf{227} (2018), no.~1, 239--287.

\bibitem[Chi05]{Ching05}
Michael Ching, \emph{Bar constructions for topological operads and the
  goodwillie derivatives of the identity}, Geometry \& Topology \textbf{9}
  (2005), no.~2, 833--934.

\bibitem[DGI06]{DGI}
William~G Dwyer, John~PC Greenlees, and S~Iyengar, \emph{Duality in algebra and
  topology}, Advances in Mathematics \textbf{200} (2006), no.~2, 357--402.

\bibitem[DL14]{davis2014commutative}
Daniel~G Davis and Tyler Lawson, \emph{Commutative ring objects in
  pro-categories and generalized moore spectra}, Geometry \& Topology
  \textbf{18} (2014), no.~1, 103--140.

\bibitem[EKMM97]{EKMM}
AD~Elmendorf, I~Kriz, MA~Mandell, and JP~May, \emph{Rings, modules, and
  algebras in stable homotopy theory}, vol.~47, 1997.

\bibitem[Fre17]{fresse17}
Benoit Fresse, \emph{Homotopy of operads and grothendieck-teichmuller groups}.

\bibitem[GJ09]{goerss-jardine}
Paul~G Goerss and John~F Jardine, \emph{Simplicial homotopy theory}, Springer
  Science \& Business Media, 2009.

\bibitem[GJMS{\etalchar{+}}06]{ESHT}
L~Gaunce~Jr, J~Peter May, Mark Steinberger, et~al., \emph{Equivariant stable
  homotopy theory}, vol. 1213, Springer, 2006.

\bibitem[Hir09]{hirschhorn}
Philip~S Hirschhorn, \emph{Model categories and their localizations}, American
  Mathematical Soc., 2009.

\bibitem[Hov98]{hovey2}
Mark Hovey, \emph{Monoidal model categories}, arXiv preprint math/9803002
  (1998).

\bibitem[Hov07]{hovey}
\bysame, \emph{Model categories}, no.~63, American Mathematical Soc., 2007.

\bibitem[Lur09]{lurieHigherTopos}
Jacob Lurie, \emph{Higher topos theory}, Princeton University Press, 2009.

\bibitem[Lur12]{lurieHigherAlgebra}
\bysame, \emph{Higher algebra}, 2012.

\bibitem[Mar08]{Markl_operads_props}
Martin Markl, \emph{Operads and props}, Handbook of algebra \textbf{5} (2008),
  87--140.

\bibitem[ML13]{MacLaneCategories}
Saunders Mac~Lane, \emph{Categories for the working mathematician}, vol.~5,
  Springer Science \& Business Media, 2013.

\bibitem[MMSS01]{mandell}
Michael~A Mandell, J~Peter May, Stefan Schwede, and Brooke Shipley, \emph{Model
  categories of diagram spectra}, Proceedings of the London Mathematical
  Society \textbf{82} (2001), no.~2, 441--512.

\bibitem[Rie14]{RiehlCat}
Emily Riehl, \emph{Categorical homotopy theory}, vol.~24, Cambridge University
  Press, 2014.

\bibitem[SS00]{schwede-shipley}
Stefan Schwede and Brooke~E Shipley, \emph{Algebras and modules in monoidal
  model categories}, Proceedings of the London Mathematical Society \textbf{80}
  (2000), no.~2, 491--511.

\bibitem[SS03a]{SchwedeEquivalences}
Stefan Schwede and Brooke Shipley, \emph{Equivalences of monoidal model
  categories}, Algebraic \& Geometric Topology \textbf{3} (2003), no.~1,
  287--334.

\bibitem[SS03b]{SchSh03}
\bysame, \emph{Stable model categories are categories of modules}, Topology
  \textbf{42} (2003), no.~1, 103--153.

\bibitem[Wei95]{Weiss95}
Michael Weiss, \emph{Orthogonal calculus}, Transactions of the American
  mathematical society \textbf{347} (1995), no.~10, 3743--3796.

\bibitem[Yau08]{Yau08}
Donald Yau, \emph{Higher dimensional algebras via colored props}, arXiv
  preprint arXiv:0809.2161 (2008).

\end{thebibliography}
\bibliographystyle{amsalpha}

\noindent
\Address

\end{document}